\numberwithin{equation}{section}
\newtheorem{theorem}{Theorem}[section]
\newtheorem{lemma}{Lemma}[section]
\newtheorem{proposition}{Proposition}[section]
\newtheorem{corollary}{Corollary}[section]
\newtheorem{remark}{Remark}[section]
\newtheorem{definition}{Definition}[section]
\numberwithin{equation}{section}
\def\be{\begin{equation}}
\def\ee{\end{equation}}
\def\br{\begin{eqnarray}}
\def\er{\end{eqnarray}}
\def\p{\partial}
\def\l{\langle}
\def\r{\rangle}
\def\w{\widetilde}
\def\f{\frac}
\begin{document}

\title{On Harmonic and Pseudoharmonic Maps from Strictly Pseudoconvex CR Manifolds}
\footnotetext{2010 Mathematics Subject Classification. 32V20, 53C43.}
\footnotetext{Keyword: CR manifolds, CR holomorphic maps, CR pluriharmonic maps, harmonic maps, pseudoharmonic maps.}

\author{ $\mbox{Tian \ Chong }$ \hspace{12pt} $\mbox{Yuxin \ Dong }$  \hspace{12pt} $\mbox{Yibin \ Ren }$ \hspace{12pt}
$\mbox{Guilin \ Yang}$}
\date{}

\maketitle
\renewcommand{\thefootnote}{\fnsymbol{footnote}}
%\footnotetext[1]{This work was supported by the National Natural Science Foundation of China [grant number 11271071]; and Laboratory of Mathematics for Nonlinear Science, Fudan.}
%\footnotetext[1]{Corresponding author.}

\begin{abstract}{Abstract.}
In this paper, we give some rigidity results for both harmonic and pseudoharmonic maps from CR manifolds into Riemannian manifolds or K\"ahler manifolds. Some basicity, pluriharmonicity and Siu-Sampson type results are established for both harmonic maps and pseudoharmonic maps.
\end{abstract}

\section{Introduction}
In 1980, Siu \cite{Siu1} studied the strong rigidity of compact K\"{a}hler manifolds by using the theory of harmonic maps. The basic discovery by Siu was a new Bochner-type formula for harmonic maps between K\"ahler manifolds, which does not involve the Ricci curvature tensor of the domains. Using the modified Bochner formula, he proved that any harmonic maps from a compact K\"ahler manifold to a K\"ahler manifold with strongly semi-negative curvature are actually pluriharmonic and some curvature terms of the pull-back complexifed tangent bundles vanish. When the target manifolds are K\"ahler manifolds with strongly negative curvature or compact quotients of irreducible bounded symmetric domains, the vanishing curvature terms, under the assumption of sufficiently high rank, force the maps to be either holomorphic or anti-holomorphic. Later, Sampson \cite{Sam2} showed that any harmonic maps from compact K\"ahler manifolds into Riemannian manifolds with nonpositive Hermitian curvature are also pluriharmonic, which generalized the pluriharmonicity result of Siu to more general targets. Pluriharmonic maps, holomorphic maps and Siu-Sampson type results have many important applications in geometry and topology of K\"ahler manifolds. The readers are refered to \cite{Tol} for details.

In 2002, Petit \cite{Pet} established some rigidity results for harmonic maps from strictly pseudoconvex CR manifolds to K\"ahler manifolds and Riemannian manifolds by using tools of Spinorial geometry. First, he proved that any harmonic map from a compact Sasakian manifold to a Riemannian manifold with nonpositive sectional curvature is trivial on the Reeb vector field. A map with this property will be called \emph{basic}. Next he proved that under suitable rank conditions the harmonic map from a compact Sasakian manifold to a K\"ahler manifold with strongly negative curvature is CR holomorphic or CR anti-holomorphic. However, it seems that Petit \cite{Pet} did not specifically discuss the relevant notions of pluriharmonicity. On the other hand, E. Barletta et al. in \cite{BDU} introduced the so-called pseudoharmonic maps from CR manifolds which are a natural generalization of harmonic maps. In his thesis \cite{Cha}, T.-H. Chang discussed some fundamental properties of pseudoharmonic maps.

In this paper, we will establish some rigidity results for both harmoinic maps and pseudoharmonic maps from CR manifolds by using the moving frame method. First, we find a result about the relationship between harmonic maps and pseudoharmonic maps from CR manifolds, which claims that these two kinds of maps are actually equivalent if the maps are basic. By the moving frame method, we not only recapture Petit's result about harmonic maps from compact Sasakian manifolds to Riemannian manifolds with nonpositive curvature (Proposition 5.1), but also show that the result is still valid for pseudoharmonic maps (Theorem 5.1).

The usual Bochner-type formula for the energy density of harmonic maps was given in \cite{EL}.
In \cite{Cha}, T.-H. Chang derived the CR Bochner-type formula for the pseudo-energy density of a pseudoharmonic map $\phi$ (Corollary 4.1). 
Unlike the Bochner formula of harmonic maps, there is a mixed term $i(\phi^i_{\alpha}\phi^i_{\bar{\alpha}0}-\phi^i_{\bar{\alpha}}\phi^i_{\alpha
0})$ appearing in the CR Bochner formula for the pseudoharmonic map. When $\phi$ is a function, it is known that the CR Paneitz operator, which is a divergence of a third order differential operator $P$, is a useful tool to treat such kind of term. One important property of the CR Paneitz operator is its nonnegativity when the dimension of the CR manifold $\geq 5$ (cf. \cite{CCh}). We generalize the operator $P$ to a differential operator, still denoted by $P$, acting on maps from a strictly pseudoconvex CR manifolds into a Riemannian manifold, and establish similar nonnegativity under the assumptions that the domain CR manifold has dimension $\geq 5$ and the target manifold is of nonpositive Hermitian curvature (Theorem 4.1). This enables us to establish a CR Bochner-type result for pseudohamonic maps (Theorem 4.2).

As mentioned previously, the notion of 'pluriharmonicity' is important for Siu-Sampson type results and other potential applications. We hope to disccuss suitable notion of pluriharmonic maps from CR manifolds. On a CR manifolds, we have two canonical connections, that is, the Levi-Civita connection of the Webster metric and the Tanaka-Webster connection of the pseudo-Hermitian structure. As a result, there are two kinds of second fundamental forms for a map from a CR manifold to a Riemannian manifold: the usual second fundamental form $B$ and a new second fundamental form $\beta$. The later one is defined with respect to the Tanaka-Webster connection of the domain CR manifold and the Levi-Civita connection
of the target Riemannian manifold (see Section 2). Using $B$, Ianus and Pastore \cite{IP1} defined two kinds of pluriharmonic notions. In \cite{DK}, Dragomir and Kamishima introduced the notion of CR pluriharmonic map by means of $\beta$. It turns out that a CR pluriharmonic map is basic and pseudoharmonic, and thus it is harmonic too. In addition, when the target manifold is K\"ahler, the CR pluriharmonic maps in \cite{DK} are more compatible with the CR holomorphic maps defined in \cite{GIP} in the sense that any CR holomorphic maps are automatically CR pluriharmonic. We also discuss the relationships between the CR pluriharmonic maps and those defined by Ianus and Pastore. Next, using the Siu-Sampson technique, we prove that any harmonic maps or pseudoharmonic maps from compact Sasakian manifolds to Riemannian manifolds with nonpositive Hermitian curvature or K\"ahler manifolds with strong semi-negative curvature are CR pluriharmonic (Theorems 6.1, 6.2). If the target is a K\"ahler manifold with strongly negative curvature and the rank of the map $\geq 3$ at some point, then the harmonic map or the pseduoharmonic map is CR holomorphic or CR anti-holomorphic (Theorem 7.2).
In \cite{Pet}, the author announced a similar result for harmonic maps using different technique.
When the target is a locally Hermitian symmetric space of noncompact type whose universal cover does not contain the hyperbolic plane as a factor, we show that the harmonic maps or pseudoharmonic maps are CR holomorphic under some explicit rank conditions (Theorem 7.1). These generalize some similar results in \cite{CT} to the CR case. To derive the above results, we also investigate the conic extensions of harmonic maps, CR pluriharmonic maps and CR holomorphic maps from Sasakian manifolds respectively, and establish also a unique continuation theorem for CR holomorphicity (Proposition 7.3). Using a technique in \cite{PRS}, we consider harmonic maps and pseudoharmonic maps from complete noncompact CR manifolds too. Under some decay conditions, some basicity and pluriharmonicity results are given.

Finally, we would like to mention that the second author \cite{Don2} has established similar rigidity results including Siu type results for pseudoharmonic maps between CR manifolds.

\begin{center}
TABLE OF CONTENTS\\
Abstract $\cdots\cdots\cdots\cdots\cdots\cdots\cdots\cdots\cdots\cdots\cdots\cdots\cdots\cdots\cdots\cdots\cdots\cdots$ $\cdot$ $1$\\
$1.$ Introduction $\cdots\cdots\cdots\cdots\cdots\cdots\cdots\cdots\cdots\cdots\cdots\cdots\cdots\cdots\cdots\cdots$ $\cdot$ $1$\\
$2.$ Preliminaries $\cdots\cdots\cdots\cdots\cdots\cdots\cdots\cdots\cdots\cdots\cdots\cdots\cdots\cdots\cdots\cdots$$\cdot$ $3$\\
$3.$ Commutative relations $\cdots\cdots\cdots\cdots\cdots\cdots\cdots\cdots\cdots\cdots\cdots\cdots\cdots$$\cdot$ $9$\\
$4.$ CR Bochner type results $\cdots\cdots\cdots\cdots\cdots\cdots\cdots\cdots\cdots\cdots\cdots\cdots$$\cdot$ $11$\\
$5.$ Basicity of harmonic and pseudoharmonic maps $\cdots \cdots\cdots\cdots\cdot$ $\cdot$ $15$\\
$6.$ CR pluriharmonicity of harmonic and pseudoharmonic maps $\cdot$$\cdot$ $19$\\
$7.$ Siu-Sampson type results $\cdots\cdots\cdots\cdots\cdots\cdots\cdots\cdots\cdots\cdots\cdots\cdots$$\cdot$ $23$\\
References $\cdots\cdots\cdots\cdots\cdots\cdots\cdots\cdots\cdots\cdots\cdots\cdots\cdots\cdots\cdots\cdots\cdots\cdot$ $\cdot$ $26$
\end{center}

\section{Preliminaries}
\subsection{Pseudohermitian structures}
A smooth manifold $M$ of real $(2m+1)$-dimension is said to be a CR manifold (of type (m,1)) if there exists a smooth $m$-dimensional complex subbundle $T_{1,0}M$ of the complexifed tangent bundle $T^{\mathbb{C}}M=TM\otimes\mathbb{C}$, such that
$$
T_{1,0}M\cap T_{0,1}M=\{0\}
$$
and
$$
[\Gamma^{\infty}(T_{1,0}M),\Gamma^{\infty}(T_{1,0}M)]\subseteq \Gamma^{\infty}(T_{1,0}M),
$$
where $T_{0,1}M=\overline{T_{1,0}M}$. The subbundle $T_{1,0}M$ is called a CR structure on $M$. Equivalently, the CR structure may also be described by the real subbundle
$H(M)=Re\{ T_{1,0}M \oplus T_{0,1}M\}$, which carries a complex structure $J:H(M)\rightarrow H(M)$ given by
$$J(Z+\bar{Z})=\sqrt{-1}(Z-\bar{Z})$$
for any $Z\in T_{1,0}M$.

Hereafter we assume $M$ is orientable. Set
$$ E_x=\{\omega\in T^*_xM: Ker(\omega)\supseteq H(M)_x\},$$
for any $x\in M$. Then $E\rightarrow M$ becomes an orientable real line subbundle of the cotangent bundle $T^*M$, and thus there exist globally defined nonvanishing sections $\theta\in \Gamma^{\infty}(E)$. Any such a section $\theta$ is called a pseudo-Hermitian structure on $M$. The Levi form $G_{\theta}$ of $\theta$ is defined by
$$G_{\theta}(X,Y)=d\theta(X,JY)$$ for any $X,Y\in H(M)$.
An orientable CR manifold endowed with a pseudo-Hermitian structure is called a pseudo-Hermitian manifold. A pseudo-Hermitian manifold $(M, J, \theta)$ is said to be a strictly pseudoconvex CR manifold if $L_{\theta}$ is positive definite. Standard examples for strictly pseudoconvex CR manifolds are the odd-dimensional spheres and the Heisenberg groups.

From now on, we always assume $(M,J,\theta)$ is strictly pseudoconvex. Consequently there exists a unique nonvanishing vector field $T$ on $M$, transverse to $H(M)$, satisfying $\theta(T)=1$ and $T\lrcorner d\theta=0$. The vector field $T$ is referred to as the characteristic direction or the Reeb field of $(M, J, \theta)$. Extending $J$ on $TM$ by $JT=0$, we can extend $L_{\theta}$ on $TM$ by the same formula as above.
This allows us to define a Riemannian metric $g_{\theta}$, called the Webster metric, as follows:
$$
g_{\theta}(X,Y)=G_{\theta}(\pi_HX,\pi_HY)+\theta(X)\theta(Y),
$$
for any $X,Y\in TM$, where $\pi_H:TM\rightarrow H(M)$ is the natural projection.
Then the two-form $\Omega$ defined by $\Omega(X,Y)=g_{\theta}(X,JY)$ coincides with the two-form $-d\theta$.

On a strictly pseudoconvex CR manifold, there exists a canonical connection preserving both the CR structure and the Webster metirc.

\begin{proposition}(cf. \cite{DT, Tan, Web})\label{pro201}
Let $(M, J, \theta)$ be a strictly pseudoconvex CR manifold and $g_{\theta}$ the Webster metric of $(M,J, \theta)$. Then there exists a unique linear connection $\nabla$ on $TM$, called the Tanaka-Webster connection, such that:\\
(1) the Levi distribution $H(M)$ is parallel with respect to $\nabla$;\\
(2) $\nabla g_{\theta}=0$, $\nabla J=0$, $\nabla \theta=0$ (hence $\nabla T=0$);\\
(3) the torsion $T_{\nabla}$ of $\nabla$ satisfies $T_{\nabla}(X,Y)=-\Omega(X,Y)T$ and $T_{\nabla}(T,JX)=-JT_{\nabla}(T,X)$, for any $X,Y\in H(M)$.
\end{proposition}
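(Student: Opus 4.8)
The plan is to establish uniqueness first by deriving an explicit Koszul-type formula for $\nabla$, and then to verify that the connection so defined actually satisfies all three properties. To set up the computation, I would fix a local frame adapted to the CR structure: pick a local orthonormal frame $\{Z_\alpha\}$ for $T_{1,0}M$ with respect to the Levi form (so $G_\theta(Z_\alpha,\overline{Z_\beta})=\delta_{\alpha\beta}$), together with its conjugate $\{Z_{\bar\alpha}\}$ spanning $T_{0,1}M$, and adjoin the Reeb field $T$; let $\{\theta^\alpha,\theta^{\bar\alpha},\theta\}$ be the dual admissible coframe. In this language conditions (1)-(3) become conditions on the connection $1$-forms $\omega_\alpha{}^\beta$ defined by $\nabla Z_\alpha=\omega_\alpha{}^\beta\otimes Z_\beta$: parallelism of $H(M)$ together with $\nabla J=0$ forces $\nabla$ to preserve the type decomposition $T_{1,0}M\oplus T_{0,1}M\oplus\mathbb{C}T$ (so no mixed components $\omega_\alpha{}^{\bar\beta}$ and no $\theta$-component), $\nabla\theta=0$ and $\nabla g_\theta=0$ reduce to $\nabla T=0$ and to skew-Hermitian symmetry $\omega_\alpha{}^\beta+\omega_{\bar\beta}{}^{\bar\alpha}=0$, and the torsion normalization in (3) pins down the remaining freedom.

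Concretely, uniqueness follows from the first structure equations: writing $d\theta^\alpha=\theta^\beta\wedge\omega_\beta{}^\alpha+\tau^\alpha$ with $\tau^\alpha$ the torsion forms, condition (3) says precisely that $d\theta=i\,\theta^\alpha\wedge\theta^{\bar\alpha}$ forces the $\theta^\beta\wedge\theta^{\bar\gamma}$-part of $d\theta^\alpha$ to be absorbed into $\theta^\beta\wedge\omega_\beta{}^\alpha$ (this is the vanishing of the $(1,1)$ part of the torsion, i.e. $T_\nabla(X,Y)=-\Omega(X,Y)T$ for $X,Y\in H(M)$), while $\tau^\alpha$ is pure, of the form $\tau^\alpha=A^\alpha{}_{\bar\beta}\,\theta\wedge\theta^{\bar\beta}$, which encodes $T_\nabla(T,JX)=-JT_\nabla(T,X)$. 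Combining the $\theta^\beta\wedge\theta^{\bar\gamma}$-normalization with the skew-Hermitian condition $\omega_\alpha{}^\beta+\overline{\omega_\beta{}^\alpha}=dg_{\alpha\bar\beta}=0$ determines $\omega_\alpha{}^\beta$ uniquely: one solves a linear algebraic system exactly as in the classical derivation of the Chern connection on a Hermitian holomorphic bundle. This gives at most one such $\nabla$.

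For existence, I would simply define $\nabla$ by the formula just extracted — set $\omega_\alpha{}^\beta$ to be the $(1,0)$-part (in the appropriate sense) of the forms produced by the structure equations, declare $\nabla T=0$, and extend by conjugation and $\mathbb{R}$-linearity — and then check directly that (1), (2), (3) hold. Parallelism of $H(M)$ and of $T$, hence of $\theta$, is immediate from the construction; $\nabla J=0$ holds because $\nabla$ preserves the eigenbundle splitting of $J$; $\nabla g_\theta=0$ reduces to the skew-Hermitian symmetry of $(\omega_\alpha{}^\beta)$ plus $\nabla\theta=0$; and the torsion identities in (3) are exactly the content of the structure equations we imposed. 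One must also verify the construction is independent of the choice of frame $\{Z_\alpha\}$, which follows from the uniqueness argument (any two local solutions agree on overlaps, so they patch to a global connection).

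The main obstacle is bookkeeping rather than conceptual: one has to be careful that the torsion condition $T_\nabla(T,JX)=-JT_\nabla(T,X)$ is genuinely an extra normalization needed to kill the residual ambiguity in the $\theta$-direction (equivalently, to fix the "vertical" part of the connection forms), and to check that the linear system for $\omega_\alpha{}^\beta$ is in fact uniquely solvable — this is the CR analogue of the fundamental lemma of Riemannian geometry and requires the nondegeneracy of $G_\theta$, i.e. strict pseudoconvexity. I would present the details in the adapted coframe, where everything becomes an explicit manipulation of the structure equations $d\theta=i\theta^\alpha\wedge\theta^{\bar\alpha}$ and $d\theta^\alpha=\theta^\beta\wedge\omega_\beta{}^\alpha+\theta\wedge\tau^\alpha$, and refer the reader to \cite{DT, Tan, Web} for the original treatments.
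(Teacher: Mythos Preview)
The paper does not prove this proposition at all: it is stated with the citation ``(cf.\ \cite{DT, Tan, Web})'' and invoked as a known result, so there is no proof in the paper to compare against. Your sketch via the structure equations in an adapted admissible coframe is essentially the standard argument given in those references (notably Webster \cite{Web} and Dragomir--Tomassini \cite{DT}), and the outline is correct; there is nothing to add beyond noting that in the paper's context this is simply quoted, not reproved.
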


Unlike the Levi-Civita connection, the torsion $T_{\nabla}$ of the Tanaka-Webster connection $\nabla$ is always non zero. The $TM$-valued 1-form $\tau$, defined by $\tau(X)=T_{\nabla}(T,X)$, for any $X\in T(M)$, is called the pseudo-Hermitian torsion of $\nabla$. Note that $\tau$ is self-adjoint and trace-free with respect to the Webster metric $g_{\theta}$ (cf. Chapter 1 of \cite{DT}).

\begin{definition}
A strictly pseudoconvex CR manifold is called a Sasakian manifold if its pseudo-Hermitian torsion is zero.
\end{definition}

Choose a local orthonormal CR frame field $\{e_0=T, e_1, \cdots, e_m,Je_1,\cdots, Je_m\}$ on $M$. Set
$$
T_{\alpha}=\frac{1}{\sqrt{2}}(e_{\alpha}-\sqrt{-1}Je_{\alpha}), \quad   T_{\bar{\alpha}}=\frac{1}{\sqrt{2}}(e_{\alpha}+\sqrt{-1}Je_{\alpha}),
$$
then $\{T_{\alpha}\}$ is a local unitary frame of $T_{1,0}M$. Let $\{\theta, \theta^{\alpha},\theta^{\bar{\alpha}}\}$ be the dual frame field of $\{T, T_{\alpha},T_{\bar{\alpha}}\}$.
Clearly Proposition \ref{pro201} implies that there exist uniquely defined complex 1-forms $\theta^{\alpha}_{\beta}\in \Gamma^{\infty}(T^*M)\otimes\mathbb{C}$ such that
$$
\nabla T_{\alpha}=\theta_{\alpha}^{\beta}\otimes T_{\beta}, \quad  \nabla T_{\bar{\alpha}}=\theta_{\bar{\alpha}}^{\bar{\beta}}\otimes T_{\bar{\beta}},
$$
where $\theta^{\bar{\alpha}}_{\bar{\beta}}=\overline{\theta^{\alpha}_{\beta}}$. These are the connection 1-forms of the Tanaka-Webster connection $\nabla$.
Set $\tau (T_{\alpha})=A^{\bar{\beta}}_{\alpha}T_{\bar{\beta}}$, and $A(T_{\alpha}, T_{\beta})=g_{\theta}(\tau(T_{\alpha}), T_{\beta})=A_{\alpha \beta}$, then $A_{\alpha \beta}=A^{\bar{\gamma}}_\alpha \delta_{\gamma \beta}=A^{\bar{\beta}}_\alpha$.
We denote $\tau^{\alpha}=A^{\alpha}_{\bar{\beta}}\theta^{\bar{\beta}}$, then $\tau=\tau^{\alpha} \otimes T_{\alpha}+\tau^{\bar{\alpha}} \otimes T_{\bar{\alpha}}$.
Write $R_{\alpha\bar{\beta}\lambda\bar{\mu}}=g_{\theta}(R(T_{\lambda},T_{\bar{\mu}})T_{\alpha},T_{\bar{\beta}})=\delta_{\gamma\beta}R^{\beta}_{\alpha\lambda\bar{\mu}}$.

\begin{lemma}(cf. \cite{DT, Web})
The structure equations for the Tanaka-Webster connection of $(M,\theta,J)$ in terms of local orthonormal CR coframe field $\{\theta,\theta^{\alpha},\theta^{\bar{\alpha}}\}$ are
\br
\nonumber d\theta&=&\sqrt{-1}\delta_{\alpha\beta}\theta^{\alpha}\wedge\theta^{\bar{\beta}},\\
\label{pr1} d\theta^{\alpha}&=&\theta^{\beta}\wedge\theta_{\beta}^{\alpha}+\theta\wedge\tau^{\alpha}, \quad \theta_{\alpha}^{\beta}+\theta_{\bar{\beta}}^{\bar{\alpha}}=0,\\
\nonumber d\theta^{\alpha}_{\beta}&=&-\theta^{\alpha}_{\gamma}\wedge\theta^{\gamma}_{\beta}+\Pi^{\alpha}_{\beta},
\er
where
\br
\Pi^{\alpha}_{\beta}=R^{\alpha}_{\beta \gamma \bar{\delta}}\theta^{\gamma}\wedge\theta^{\bar{\delta}}+W^{\alpha}_{\beta \gamma}\theta^{\gamma}\wedge\theta-W^{\alpha}_{\beta \bar{\gamma}}\theta^{\bar{\gamma}}\wedge\theta+
\sqrt{-1}\theta_{\beta}\wedge\tau^{\alpha}-\sqrt{-1}\tau_{\beta}\wedge\theta^{\alpha},
\er
and
\br\label{pr3}
W^{\beta}_{\alpha \bar{\gamma}}=h^{\bar{\delta}\beta}A_{\bar{\gamma}\bar{\delta}, \alpha},\quad
W^{\beta}_{\alpha \gamma}=h^{\bar{\delta}\beta}A_{\alpha \gamma, \bar{\delta}}, \quad
\tau_{\alpha}=h_{\alpha\bar{\beta}}\tau^{\bar{\beta}}, \quad
\theta_{\alpha}=h_{\alpha\bar{\beta}}\theta^{\bar{\beta}},
\er
where $R$ denote the curvature tensor of $\nabla$.
\end{lemma}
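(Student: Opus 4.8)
The plan is to obtain the three displayed identities as the Cartan structure equations of the Tanaka--Webster connection $\nabla$, read off directly from its characterizing properties in Proposition~\ref{pro201}, and then to extract the explicit shape of the curvature $2$-forms $\Pi^\alpha_\beta$ from the first Bianchi identity. First I would record which connection $1$-forms can be nonzero relative to the adapted frame $\{T,T_\alpha,T_{\bar\alpha}\}$ with dual coframe $\{\theta,\theta^\alpha,\theta^{\bar\alpha}\}$. Because $\nabla T=0$ and $H(M)$ is $\nabla$-parallel, the connection matrix in this frame is block diagonal: $\nabla T_\alpha=\theta^\beta_\alpha\otimes T_\beta$, $\nabla T_{\bar\alpha}=\theta^{\bar\beta}_{\bar\alpha}\otimes T_{\bar\beta}$ with $\theta^{\bar\beta}_{\bar\alpha}=\overline{\theta^\beta_\alpha}$, and there is no component along $\theta$ nor any that mixes $(1,0)$ with $(0,1)$. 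Writing $\nabla g_\theta=0$ in the orthonormal CR frame, where $g_\theta(T_\alpha,T_{\bar\beta})=\delta_{\alpha\beta}$, gives $0=d\delta_{\alpha\beta}=\theta^\beta_\alpha+\theta^{\bar\alpha}_{\bar\beta}$, which is the skew-symmetry relation in \eqref{pr1}.

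Next I would derive the first two equations from the torsion. For any frame index one has $d\theta^A=-\theta^A_B\wedge\theta^B+\Theta^A$, where $\Theta^A(X,Y)=\theta^A(T_\nabla(X,Y))$ are the torsion $2$-forms; this is Cartan's formula together with $T_\nabla(X,Y)=\nabla_XY-\nabla_YX-[X,Y]$. Taking $A=0$ and using $\theta\circ\nabla\equiv0$ gives $d\theta=\Theta^0=-\theta\circ T_\nabla$; by property (3) of Proposition~\ref{pro201}, $T_\nabla$ restricted to $H(M)$ equals $-\Omega\otimes T$ and $T_\nabla(T,\cdot)=\tau$ is $H(M)$-valued, so $d\theta(T,\cdot)=0$, $d\theta(T_\alpha,T_\beta)=0$, and $d\theta(T_\alpha,T_{\bar\beta})=-\Omega(T_\alpha,T_{\bar\beta})=-g_\theta(T_\alpha,JT_{\bar\beta})=\sqrt{-1}\,g_\theta(T_\alpha,T_{\bar\beta})=\sqrt{-1}\,\delta_{\alpha\beta}$, using $JT_{\bar\beta}=-\sqrt{-1}\,T_{\bar\beta}$; this is the first equation. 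Taking $A=\alpha$, the only contribution of $T_\nabla$ to the $T_\alpha$-slot comes from $\tau$, and since $\tau(T_\beta)=A^{\bar\gamma}_\beta T_{\bar\gamma}$ has no $(1,0)$ part while $\tau(T_{\bar\beta})=A^\gamma_{\bar\beta}T_\gamma$, we get $\Theta^\alpha=\theta\wedge\tau^\alpha$ with $\tau^\alpha=A^\alpha_{\bar\beta}\theta^{\bar\beta}$, hence $d\theta^\alpha=\theta^\beta\wedge\theta^\alpha_\beta+\theta\wedge\tau^\alpha$.

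For the third equation I would define the curvature $2$-forms $\Pi^\alpha_\beta:=d\theta^\alpha_\beta+\theta^\alpha_\gamma\wedge\theta^\gamma_\beta$, so that $R(X,Y)T_\beta=\Pi^\alpha_\beta(X,Y)T_\alpha$, and take $R^\alpha_{\beta\gamma\bar\delta}$ to be the coefficient of $\theta^\gamma\wedge\theta^{\bar\delta}$ in $\Pi^\alpha_\beta$, which matches the normalization $R_{\alpha\bar\beta\lambda\bar\mu}=g_\theta(R(T_\lambda,T_{\bar\mu})T_\alpha,T_{\bar\beta})$ from the text. The remaining task is to identify the components of $\Pi^\alpha_\beta$ along $\theta\wedge\theta^\gamma$, $\theta\wedge\theta^{\bar\gamma}$, $\theta^\gamma\wedge\theta^\delta$, $\theta^{\bar\gamma}\wedge\theta^{\bar\delta}$ with expressions in $A$ and its Tanaka--Webster covariant derivatives. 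This comes from the first Bianchi identity: applying $d$ to $d\theta^\alpha=\theta^\beta\wedge\theta^\alpha_\beta+\theta\wedge\tau^\alpha$ and using $d^2=0$; after substituting the second structure equation for $d\theta^\alpha_\beta$, the cubic terms in the connection forms cancel (relabel indices and use antisymmetry of the wedge), leaving an identity of the schematic form $\theta^\beta\wedge\Pi^\alpha_\beta=\sqrt{-1}\,\delta_{\gamma\delta}\,\theta^\gamma\wedge\theta^{\bar\delta}\wedge\tau^\alpha+\theta\wedge\big(\tau^\beta\wedge\theta^\alpha_\beta-d\tau^\alpha\big)$. Expanding $d\tau^\alpha=dA^\alpha_{\bar\mu}\wedge\theta^{\bar\mu}+A^\alpha_{\bar\mu}\,d\theta^{\bar\mu}$ and replacing $dA^\alpha_{\bar\mu}$ by covariant derivatives via the connection forms, then matching coefficients of the independent $3$-forms $\theta^\beta\wedge\theta\wedge\theta^\gamma$, $\theta^\beta\wedge\theta\wedge\theta^{\bar\gamma}$, $\theta^\beta\wedge\theta^\gamma\wedge\theta^\delta$, $\theta^\beta\wedge\theta^{\bar\gamma}\wedge\theta^{\bar\delta}$, forces the $\theta\wedge\theta^\gamma$ and $\theta\wedge\theta^{\bar\gamma}$ parts of $\Pi^\alpha_\beta$ to equal $W^\alpha_{\beta\gamma}\theta^\gamma\wedge\theta$ and $-W^\alpha_{\beta\bar\gamma}\theta^{\bar\gamma}\wedge\theta$ with $W$'s as in \eqref{pr3}, and collects the remaining purely horizontal pieces into $\sqrt{-1}\,\theta_\beta\wedge\tau^\alpha-\sqrt{-1}\,\tau_\beta\wedge\theta^\alpha$; adding back $R^\alpha_{\beta\gamma\bar\delta}\theta^\gamma\wedge\theta^{\bar\delta}$ reproduces the asserted $\Pi^\alpha_\beta$.

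The conceptual content is modest --- this is Cartan's method --- so I expect the main obstacle to be purely computational bookkeeping in the Bianchi step: tracking all the $\sqrt{-1}$ factors (which all descend from $d\theta=\sqrt{-1}\,\delta_{\alpha\beta}\theta^\alpha\wedge\theta^{\bar\beta}$), the signs produced when commuting connection $1$-forms past coframe $1$-forms, and the index gymnastics of the raising/lowering conventions in \eqref{pr3}, so that every term lands in its correct slot and no spurious curvature component appears. In particular one must verify that the $(2,0)$-part of $\Pi^\alpha_\beta$ collapses exactly to $-\sqrt{-1}\,\tau_\beta\wedge\theta^\alpha$ with no independent holomorphic curvature term surviving, which again follows from the Bianchi identity together with $\theta^\beta_\alpha+\theta^{\bar\alpha}_{\bar\beta}=0$. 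Since the normalizations of $R$ and $A$ and the sign conventions are fixed as in \cite{DT,Web}, the final verification can equivalently be carried out by direct comparison with those references.
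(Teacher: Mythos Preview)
Your approach is correct and is precisely the standard derivation found in the cited references \cite{Web, DT}: read off the first two equations from Cartan's first structure equation together with the torsion normalization in Proposition~\ref{pro201}, and then obtain the explicit form of $\Pi^\alpha_\beta$ by differentiating $d\theta^\alpha=\theta^\beta\wedge\theta^\alpha_\beta+\theta\wedge\tau^\alpha$ (the first Bianchi identity) and matching coefficients. The paper itself does not supply a proof for this lemma; it is stated with a ``cf.'' to \cite{DT, Web} and used as background, so there is no in-paper argument to compare against. One small slip in your write-up: the torsion $2$-form is $\Theta^0=\theta\circ T_\nabla$, not $-\theta\circ T_\nabla$; your subsequent evaluation on $(T_\alpha,T_{\bar\beta})$ is nonetheless correct, so this is only a typographical inconsistency and does not affect the argument.
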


From (\ref{pr1}), one may derive that (cf. \cite{Web}): $R_{\alpha\bar{\beta}\lambda\bar{\mu}}=R_{\lambda\bar{\beta}\alpha\bar{\mu}}$. The pseudo-Hermitian Ricci tensor is given by $R_{\lambda\bar{\mu}}=R^{\alpha}_{\lambda\alpha\bar{\mu}}=R^{\alpha}_{\alpha\lambda\bar{\mu}}$.

For a strictly pseudoconvex CR manifold $(M^{2m+1},J,\theta)$, we denote by $\nabla^{\theta}$ the Levi-Civita connection of the Webster metric $g_{\theta}$.
From Lemma 1.3 of \cite{DT}, we know the relation between the Tanaka-Webster connection $\nabla$ and Levi-Civita connection $\nabla^{\theta}$ of $(M,J,\theta)$:
\be
\label{pr4} \nabla^{\theta}=\nabla+(\frac{1}{2}\Omega-A)\otimes T+\tau\otimes \theta+\frac{1}{2}\theta\odot J,
\ee
where $A(X,Y)=g_{\theta}(\tau X,Y)$, $(\theta\odot J)(X,Y)=\theta(X)JY+\theta(Y)JX$ (cf. also \cite{LW}). By (\ref{pr4}), we have
$$
\nabla^{\theta}_X T=\tau(X)+\frac{1}{2}JX.
$$
In particular, $\nabla^{\theta}_T T=0$. If $X,Y\in H(M)$, then
\br\label{pr5}
\nabla^{\theta}_X Y=\nabla_X Y+[\frac{1}{2}\Omega(X,Y)-A(X,Y)]T.
\er

\begin{lemma}\label{prl2}
For any local orthonormal CR frame field $\{e_A\}_{A=0}^{2m}$, we have
\be
\sum_{A=0}^{2m} \nabla^{\theta}_{e_A} e_A=\sum_{A=0}^{2m} \nabla_{e_A} e_A.
\ee
In particular, we get
\be
\label{pr7}\sum_{A=1}^{2m} \nabla^{\theta}_{e_A} e_A=\sum_{A=1}^{2m} \nabla_{e_A} e_A.
\ee
\end{lemma}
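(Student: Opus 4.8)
The plan is to evaluate both sides termwise using the decomposition (\ref{pr4}) of the Levi-Civita connection $\nabla^{\theta}$ in terms of the Tanaka-Webster connection $\nabla$. Fix the local orthonormal CR frame $\{e_0=T,e_1,\dots,e_m,Je_1,\dots,Je_m\}$. Since $\Omega$, $A$, $\tau$ and $\theta\odot J$ are tensorial, (\ref{pr4}) gives, for each index $A$,
$$
\nabla^{\theta}_{e_A}e_A=\nabla_{e_A}e_A+\Big[\tfrac12\Omega(e_A,e_A)-A(e_A,e_A)\Big]T+\theta(e_A)\,\tau(e_A)+\tfrac12(\theta\odot J)(e_A,e_A),
$$
so the identity reduces to showing that the sum over $A$ of the last three terms vanishes.

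First I would dispose of the $\Omega$-term: since $\Omega=-d\theta$ is a $2$-form it is skew, so $\Omega(e_A,e_A)=0$ for every $A$. Next, the $\tau\otimes\theta$ and $\tfrac12\theta\odot J$ terms can only receive contributions from $A=0$, because $\theta(e_A)=0$ for $A\ge 1$; but $\theta(T)=1$ together with $\tau(T)=T_{\nabla}(T,T)=0$ and $JT=0$ force $\theta(T)\tau(T)=0$ and $(\theta\odot J)(T,T)=2\theta(T)\,JT=0$. Finally, for the $A$-term I would invoke that the pseudo-Hermitian torsion $\tau$ is trace-free with respect to $g_{\theta}$ (recorded after Proposition \ref{pro201}), whence $\sum_{A=0}^{2m}A(e_A,e_A)=\sum_{A=0}^{2m}g_{\theta}(\tau e_A,e_A)=\operatorname{tr}_{g_{\theta}}\tau=0$ (the $A=0$ summand being already zero since $\tau(T)=0$). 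Summing the displayed identity over $A$ then gives $\sum_{A=0}^{2m}\nabla^{\theta}_{e_A}e_A=\sum_{A=0}^{2m}\nabla_{e_A}e_A$, the first assertion.

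For the refinement (\ref{pr7}), I would simply isolate the $A=0$ term. By Proposition \ref{pro201}(2) we have $\nabla T=0$, hence $\nabla_T T=0$; and $\nabla^{\theta}_X T=\tau(X)+\tfrac12 JX$ with $X=T$ (or (\ref{pr4}) directly) gives $\nabla^{\theta}_T T=0$ as well. Subtracting this common $A=0$ contribution from both sides of the identity just proved yields $\sum_{A=1}^{2m}\nabla^{\theta}_{e_A}e_A=\sum_{A=1}^{2m}\nabla_{e_A}e_A$.

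I do not expect any genuine obstacle here: the lemma is a bookkeeping consequence of (\ref{pr4}), and the only points needing a moment's care are the skew-symmetry of $\Omega$, the vanishing of $\tau(T)$ and $JT$, and the trace-free property of $\tau$ — all of which are available in the preliminaries above.
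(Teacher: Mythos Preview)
Your argument is correct and follows exactly the paper's approach: apply (\ref{pr4}) termwise, observe that the $\Omega$, $\tau\otimes\theta$ and $\theta\odot J$ contributions vanish trivially on the diagonal, and use that $\tau$ is trace-free to kill the remaining $-\sum_A A(e_A,e_A)\,T=-\operatorname{tr}(\tau)\,T$ term; the paper's proof is simply the one-line summary $\sum_A\nabla^{\theta}_{e_A}e_A-\sum_A\nabla_{e_A}e_A=-\operatorname{tr}(\tau)\,T=0$ together with $\nabla^{\theta}_T T=\nabla_T T=0$ for (\ref{pr7}).
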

\begin{proof}
By (\ref{pr4}), we have
$$\sum_{A=0}^{2n} \nabla^{\theta}_{e_A} e_A-\sum_{A=0}^{2n} \nabla_{e_A} e_A=-trace(\tau)T=0.$$
Since $\nabla^{\theta}_T T=\nabla_T T=0$, (\ref{pr7}) is valid.
\end{proof}

As a result of Lemma \ref{prl2}, we have
\begin{lemma}\label{prl3}
Let $(M,J,\theta)$ be a strictly pseudoconvex CR manifold and let $X$ be any vector field on $M$. Then
\be
\nonumber div X=\sum_{A=0}^{2n} g_{\theta}(\nabla_{e_A} X, e_A).
\ee
where $\nabla$ is the Tanaka-Webster connection of $M$ and $\{e_A\}_{A=0}^{2m}$ is a local orthonormal CR frame field on $M$. In particular, if $X\in H(M)$, then
$$
div X=\sum_{A=1}^{2n}  g_{\theta}(\nabla_{e_A} X, e_A).
$$
\end{lemma}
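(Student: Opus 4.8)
The plan is to derive the formula by comparing the Levi-Civita expression for the divergence with the corresponding expression built from the Tanaka-Webster connection, using that both connections are metric for $g_\theta$ and that their traces along an orthonormal CR frame agree, as recorded in Lemma \ref{prl2}.

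First I would recall that for any local orthonormal frame $\{e_A\}_{A=0}^{2m}$ one has the standard pointwise identity $div\,X=\sum_{A=0}^{2m} g_\theta(\nabla^{\theta}_{e_A} X, e_A)$, which is independent of the choice of frame. Since $\nabla^{\theta} g_\theta=0$, differentiating $g_\theta(X,e_A)$ gives $e_A\big(g_\theta(X,e_A)\big)=g_\theta(\nabla^{\theta}_{e_A} X,e_A)+g_\theta(X,\nabla^{\theta}_{e_A} e_A)$, and since $\nabla g_\theta=0$ by Proposition \ref{pro201}(2), the same quantity equals $g_\theta(\nabla_{e_A} X,e_A)+g_\theta(X,\nabla_{e_A} e_A)$. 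Summing over $A$ and subtracting, the derivative terms cancel and one is left with
\[
\sum_{A=0}^{2m} g_\theta(\nabla^{\theta}_{e_A} X, e_A)-\sum_{A=0}^{2m} g_\theta(\nabla_{e_A} X, e_A)=g_\theta\Big(X,\ \sum_{A=0}^{2m}\big(\nabla_{e_A} e_A-\nabla^{\theta}_{e_A} e_A\big)\Big)=0,
\]
where the last equality is precisely Lemma \ref{prl2}. This yields the first identity.

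For the ``in particular'' statement, suppose $X\in H(M)$. Since the Levi distribution $H(M)$ is parallel with respect to $\nabla$ by Proposition \ref{pro201}(1), we have $\nabla_{e_0} X=\nabla_T X\in H(M)$, hence $g_\theta(\nabla_{e_0} X,e_0)=g_\theta(\nabla_T X,T)=0$; thus the term $A=0$ drops out and $div\,X=\sum_{A=1}^{2m} g_\theta(\nabla_{e_A} X,e_A)$.

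No genuine obstacle is expected here; the only point requiring care is to work in an orthonormal CR frame, so that $g_\theta(e_A,e_B)=\delta_{AB}$ and no Christoffel-type coefficients appear, and to invoke the metric compatibility of both connections. As an alternative one could substitute (\ref{pr4}) directly into $\sum_A g_\theta(\nabla^{\theta}_{e_A} X-\nabla_{e_A} X,e_A)$ and verify termwise that the contributions of $(\tfrac12\Omega-A)\otimes T$, $\tau\otimes\theta$ and $\tfrac12\,\theta\odot J$ all vanish, using $\Omega(T,\cdot)=0$, $\tau(T)=0$, $JT=0$, and the antisymmetry of $d\theta$; but routing through Lemma \ref{prl2} is shorter.
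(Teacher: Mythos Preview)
Your proof is correct and follows the same route as the paper, which simply records the lemma ``as a result of Lemma~\ref{prl2}'' without further details; you have merely filled in the standard argument (metric compatibility of both connections reduces the question to equality of the traced frame derivatives, which is Lemma~\ref{prl2}). The handling of the horizontal case via $\nabla$-parallelism of $H(M)$ is also the intended one.
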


\subsection{Harmonic maps and pseudoharmonic maps}

Let $(M,J,\theta)$ be a strictly pseudoconvex CR manifold with the Tanaka-Webster connection $\nabla$ and let $(N,h)$ be a Riemannian manifold with Levi-Civita connection $\nabla^h$. For a smooth map $\phi:M\rightarrow N$, there are two induced connections $\nabla^{\theta}\otimes \phi^{-1}\nabla^h$ and $\nabla\otimes \phi^{-1}\nabla^h$ on $T^*M\otimes\phi^{-1}TN$. Using these two connections, one may define the usual second fundamental form $B$ and a new second fundamental form $\beta$ (cf. \cite{Pet}) for the map $\phi$ as follows:
\be
B(X,Y)=\nabla^h_Y (d\phi(X))-d\phi(\nabla^{\theta}_Y X)
\ee
and
\be\label{pr8}
\beta(X,Y)=\nabla^h_Y (d\phi(X))-d\phi(\nabla_Y X),
\ee
where $\phi^{-1}\nabla^h$ is written as $\nabla^h$ for simplicity. Due to Lemma \ref{prl2}, we have
\be\label{pr9}
trace_{g_{\theta}}B=trace_{g_{\theta}}\beta.
\ee
Recall that a map $\phi$ is called harmonic if $\tau^{\theta}(\phi):=trace_{g_{\theta}}B=0$ (cf. \cite{EL}). As a result of (\ref{pr9}), the harmonicity of $\phi$ can also be defined by $\beta$. Note that the most advantage of using $\nabla$ in (\ref{pr8}) is that the Tanaka-Webster connection preserves the CR structure; a little disadvantage of using $\nabla$ is that $\beta$ is no longer symmetric. However, we will see that the non-symmetry of $\beta$ may also lead to some unexpected geometric consequences.

For any bilinear form $C$ on $TM$, we denote by $\pi_H C$ the restriction of $C$ to $H(M)\otimes H(M)$.
\begin{definition}
A map $\phi:(M^{2m+1},J,\theta)\to (N,h)$ from a strictly pseudoconvex CR manifold to a Riemannian manifold is called a pseudoharmonic map if it is a critical point of the following pseudo-energy functional
\begin{equation}
E_{H}(\phi)=\int_Me_H(\phi)\Psi
\end{equation}
where $e_H(\phi)=\frac{1}{2}trace_{G_{\theta}}(\pi_H \phi^*h)$ is the pseudo-energy density of $\phi$ and $\Psi=\theta\wedge(d\theta)^m$ is the volume form of $g_{\theta}$.
\end{definition}

\begin{proposition}(cf. \cite{BDU, DT})
Let $\phi:(M^{2m+1},J,\theta)\to (N,h)$ be a smooth map from a strictly pseudoconvex CR manifold to a Riemannian manifold. Let $\tau(\phi)$ be pseudo-tensor field of $\phi$ defined by
\be\label{pr10}
\tau(\phi)=trace_{G_{\theta}}(\pi_H \beta).
\ee
Then $\phi$ is pseudoharmonic if and only if $\tau(\phi)=0$.
\end{proposition}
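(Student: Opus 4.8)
The plan is to compute the first variation of $E_H$ directly and read off the Euler--Lagrange equation. Take a smooth variation $\{\phi_t\}_{|t|<\epsilon}$ of $\phi=\phi_0$ that agrees with $\phi$ outside a fixed compact set, set $V=\frac{\partial\phi_t}{\partial t}\big|_{t=0}\in\Gamma(\phi^{-1}TN)$, and view the variation as a smooth map $\Phi:M\times(-\epsilon,\epsilon)\to N$. Fix a local orthonormal frame $\{e_1,\dots,e_{2m}\}$ of $H(M)$, independent of $t$; then $e_H(\phi_t)=\frac12\sum_{\alpha=1}^{2m}h\big(d\phi_t(e_\alpha),d\phi_t(e_\alpha)\big)$, and differentiating in $t$ gives
\[
\frac{\partial}{\partial t}e_H(\phi_t)=\sum_{\alpha=1}^{2m}h\big(\nabla^h_{\partial_t}d\Phi(e_\alpha),\,d\Phi(e_\alpha)\big).
\]
Because $\nabla^h$ is torsion-free and $[\partial_t,e_\alpha]=0$ on $M\times(-\epsilon,\epsilon)$, the symmetry $\nabla^h_{\partial_t}d\Phi(e_\alpha)=\nabla^h_{e_\alpha}d\Phi(\partial_t)$ holds; evaluating at $t=0$ and using $d\Phi(\partial_t)|_{t=0}=V$ yields $\frac{\partial}{\partial t}\big|_{t=0}e_H(\phi_t)=\sum_\alpha h(\nabla^h_{e_\alpha}V,d\phi(e_\alpha))$.

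Next I would rewrite this as a divergence minus a term carrying $\tau(\phi)$. Let $X$ be the unique section of $H(M)$ determined by $g_\theta(X,Y)=h(V,d\phi(Y))$ for all $Y\in H(M)$. Lemma \ref{prl3} applies since $X\in H(M)$, so
\[
\mathrm{div}\,X=\sum_{\alpha=1}^{2m}g_\theta(\nabla_{e_\alpha}X,e_\alpha)
=\sum_{\alpha=1}^{2m}\Big[e_\alpha\big(h(V,d\phi(e_\alpha))\big)-h\big(V,d\phi(\nabla_{e_\alpha}e_\alpha)\big)\Big],
\]
where $\nabla$ is the Tanaka--Webster connection. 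Expanding $e_\alpha\big(h(V,d\phi(e_\alpha))\big)=h(\nabla^h_{e_\alpha}V,d\phi(e_\alpha))+h(V,\nabla^h_{e_\alpha}d\phi(e_\alpha))$ and recognizing $\beta(e_\alpha,e_\alpha)=\nabla^h_{e_\alpha}d\phi(e_\alpha)-d\phi(\nabla_{e_\alpha}e_\alpha)$ from (\ref{pr8}), together with $\tau(\phi)=\mathrm{trace}_{G_\theta}(\pi_H\beta)=\sum_\alpha\beta(e_\alpha,e_\alpha)$ (see (\ref{pr10})), gives
\[
\mathrm{div}\,X=\frac{\partial}{\partial t}\Big|_{t=0}e_H(\phi_t)+h\big(V,\tau(\phi)\big).
\]

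Finally I would integrate over $M$ against $\Psi=\theta\wedge(d\theta)^m$. By Lemma \ref{prl3} the divergence $\mathrm{div}\,X$ coincides with the Levi-Civita divergence of $X$ with respect to $g_\theta$, and $\Psi$ is the Riemannian volume form of $g_\theta$, so $\int_M\mathrm{div}\,X\,\Psi=\int_M d(X\lrcorner\Psi)=0$ since $X$ has compact support. Hence
\[
\frac{d}{dt}\Big|_{t=0}E_H(\phi_t)=-\int_M h\big(V,\tau(\phi)\big)\,\Psi,
\]
which vanishes for every admissible $V$ if and only if $\tau(\phi)=0$; this is the claim.

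The computation is essentially routine, so there is no substantial obstacle; the points needing care are purely organisational: checking that the frame-wise identities assemble into the globally defined field $\tau(\phi)$, noting that the $e_0=T$ direction contributes nothing to $\mathrm{div}\,X$ because $\nabla_TT=0$ and $g_\theta(X,T)=0$, and observing that it is precisely the Tanaka--Webster form of the divergence formula in Lemma \ref{prl3} that makes $\beta$ appear directly. By (\ref{pr9}) one could equally carry out the bookkeeping with $B$, since $B$ and $\beta$ have the same $G_\theta$-trace over $H(M)$, arriving at the same equation $\tau(\phi)=0$.
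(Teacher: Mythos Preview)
The paper does not supply its own proof of this proposition; it is simply stated with the citation (cf.\ \cite{BDU, DT}) and no argument is given. Your first-variation computation is correct and is the standard derivation one finds in the cited references: you differentiate $e_H(\phi_t)$ in a horizontal orthonormal frame, use the symmetry $\nabla^h_{\partial_t}d\Phi(e_\alpha)=\nabla^h_{e_\alpha}d\Phi(\partial_t)$ coming from the torsion-freeness of $\nabla^h$, and then rewrite the integrand as a divergence plus $h(V,\tau(\phi))$. The one genuinely CR-specific point, and you handle it correctly, is that Lemma~\ref{prl3} lets you compute $\mathrm{div}\,X$ with the Tanaka--Webster connection, so that the term $d\phi(\nabla_{e_\alpha}e_\alpha)$ stays in $d\phi(H(M))$ (since $H(M)$ is $\nabla$-parallel) and pairs with $X$ via the defining relation for $X$; this is exactly why $\beta$ rather than $B$ appears naturally, although as you note (\ref{pr9}) makes the horizontal traces coincide.
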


From Lemma \ref{prl2}, it is easy to see that
\be\label{pr11}
\tau(\phi)=trace_{G_{\theta}}(\pi_H B).
\ee

\begin{definition}
A smooth map $\phi:(M^{2m+1},J,\theta)\rightarrow (N,h)$ is called basic if $d\phi(T)=0$.
\end{definition}

\begin{proposition}\label{bal4}
Let $\phi:(M,J,\theta)\rightarrow (N,h)$ be a smooth map. Assume that $\nabla^{h}_T (d\phi(T))=0$, that is, $d\phi(T)$ is parallel in the direction $T$ with respect to the pull-back connection $\phi^{-1}\nabla^h$. Then $\tau^{\theta}(\phi)=\tau(\phi)$; and thus $\phi$ is harmonic if and onlu if $\phi$ is pseudoharmonic.
\end{proposition}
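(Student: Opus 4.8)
The plan is to compute $\tau^{\theta}(\phi) = trace_{g_{\theta}} B$ directly by splitting the trace over a local orthonormal CR frame $\{e_0 = T, e_1, \dots, e_{2m}\}$ into the horizontal part and the $T$-direction part, and to do the same for $\tau(\phi) = trace_{g_{\theta}} \pi_H B + B(T,T)$-type decomposition. Concretely, $trace_{g_{\theta}} B = B(T,T) + \sum_{A=1}^{2m} B(e_A, e_A)$, while by \eqref{pr11} we have $\tau(\phi) = trace_{G_{\theta}}(\pi_H B) = \sum_{A=1}^{2m} B(e_A,e_A)$. So the difference $\tau^{\theta}(\phi) - \tau(\phi)$ is exactly $B(T,T) = \nabla^h_T(d\phi(T)) - d\phi(\nabla^{\theta}_T T)$.

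The second step is to invoke the two facts already recorded in the excerpt: $\nabla^{\theta}_T T = 0$ (stated right after \eqref{pr4}), and the hypothesis $\nabla^h_T(d\phi(T)) = 0$. Together these give $B(T,T) = 0$, hence $\tau^{\theta}(\phi) = \tau(\phi)$. Then harmonicity ($\tau^{\theta}(\phi) = 0$) and pseudoharmonicity ($\tau(\phi) = 0$, by the Proposition characterizing pseudoharmonic maps via \eqref{pr10}–\eqref{pr11}) are literally the same equation, so the maps coincide.

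There is really no serious obstacle here; the only point requiring a little care is bookkeeping the trace decomposition correctly — making sure that $trace_{G_{\theta}}(\pi_H B)$ over the unitary frame $\{T_\alpha\}$ agrees with $\sum_{A=1}^{2m} B(e_A, e_A)$ over the real orthonormal frame, which follows from the relation between $\{T_\alpha, T_{\bar\alpha}\}$ and $\{e_\alpha, Je_\alpha\}$ and the fact that $B$ extends complex-bilinearly. One should also note that although $\beta$ (equivalently $B$ after tracing, by \eqref{pr9}) need not be symmetric, this plays no role: we only evaluate the diagonal term $B(T,T)$. I would write the argument in three or four lines: expand the two traces, identify their difference as $B(T,T)$, kill it using $\nabla^{\theta}_T T = 0$ and the hypothesis, and conclude.

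\begin{proof}
Fix a point $p \in M$ and a local orthonormal CR frame field $\{e_0 = T, e_1, \dots, e_{2m}\}$ near $p$. By definition,
\be
\tau^{\theta}(\phi) = trace_{g_{\theta}} B = B(T,T) + \sum_{A=1}^{2m} B(e_A, e_A).
\ee
On the other hand, by \eqref{pr11} and the definition of the horizontal trace,
\be
\tau(\phi) = trace_{G_{\theta}}(\pi_H B) = \sum_{A=1}^{2m} B(e_A, e_A).
\ee
Hence
\be
\tau^{\theta}(\phi) - \tau(\phi) = B(T,T) = \nabla^h_T(d\phi(T)) - d\phi(\nabla^{\theta}_T T).
\ee
By \eqref{pr4} we have $\nabla^{\theta}_T T = 0$, and by assumption $\nabla^h_T(d\phi(T)) = 0$, so $B(T,T) = 0$ and therefore $\tau^{\theta}(\phi) = \tau(\phi)$. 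Consequently $\tau^{\theta}(\phi) = 0$ if and only if $\tau(\phi) = 0$, i.e. $\phi$ is harmonic if and only if $\phi$ is pseudoharmonic.
\end{proof}
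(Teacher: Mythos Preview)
Your proof is correct and follows essentially the same approach as the paper: both split the full trace into the horizontal part and the $T$-direction part, identify the horizontal contribution with $\tau(\phi)$ (you invoke \eqref{pr11}, the paper uses Lemma~\ref{prl2} directly, which is what underlies \eqref{pr11}), and then kill the remaining term $B(T,T)$ using $\nabla^{\theta}_T T=0$ together with the hypothesis.
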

\begin{proof}
Choose a local orthonormal CR frame field $\{e_A\}_{A=0}^{2n}=\{T,e_1,e_2\cdots,e_{2n}\}$. Using Lemma \ref{prl2} and the assumption, we compute
\br
\nonumber \tau^{\theta}(\phi)&=&\sum_{A=1}^{2n}[\nabla^h_{e_A} (d\phi(e_A))-d\phi(\nabla^{\theta}_{e_A}e_A)]+\nabla^h_{T}(d\phi(T))\\
\nonumber &=&\sum_{A=1}^{2n}[\nabla^h_{e_A} (d\phi(e_A))-d\phi(\nabla_{e_A}e_A)]\\
&=&\tau(\phi).
\er
\end{proof}

\begin{corollary}\label{cor21}
Let $\phi:(M^{2m+1},J,\theta)\rightarrow (N,h)$ be a baisc map. Then $\phi$ is harmonic if and only if $\phi$ is pseudoharmonic.
\end{corollary}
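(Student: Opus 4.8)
The plan is to deduce the statement directly from Proposition~\ref{bal4}. The one observation needed is that a basic map trivially satisfies the hypothesis of that proposition: by definition, $\phi$ being basic means $d\phi(T)=0$ as a section of $\phi^{-1}TN$, and the identically vanishing section is parallel with respect to any connection, so in particular $\nabla^{h}_T(d\phi(T))=\nabla^{h}_T(0)=0$.

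Having checked this, I would simply apply Proposition~\ref{bal4} to obtain $\tau^{\theta}(\phi)=\tau(\phi)$. Then $\tau^{\theta}(\phi)=0$ if and only if $\tau(\phi)=0$; since harmonicity means $\tau^{\theta}(\phi)=trace_{g_{\theta}}B=0$ and pseudoharmonicity is equivalent to $\tau(\phi)=0$, this is precisely the asserted equivalence.

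Since this is an immediate corollary, there is no genuine obstacle; the only point worth making explicit is that it is the vanishing of the section $d\phi(T)$, and not merely some weaker property, that is being used. If one preferred a self-contained argument not citing Proposition~\ref{bal4}, one could repeat its short computation verbatim: pick a local orthonormal CR frame $\{T,e_1,\dots,e_{2m}\}$, split $\tau^{\theta}(\phi)=\sum_{A=1}^{2m}\bigl[\nabla^{h}_{e_A}(d\phi(e_A))-d\phi(\nabla^{\theta}_{e_A}e_A)\bigr]+\nabla^{h}_T(d\phi(T))$, discard the last summand because $d\phi(T)=0$, and use (\ref{pr7}) to replace $\nabla^{\theta}_{e_A}e_A$ by $\nabla_{e_A}e_A$, which turns the remaining sum into $\tau(\phi)$.
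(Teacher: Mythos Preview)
Your proposal is correct and matches the paper's intended argument: the corollary is stated immediately after Proposition~\ref{bal4} with no separate proof, precisely because a basic map has $d\phi(T)\equiv 0$ and hence trivially satisfies the hypothesis $\nabla^{h}_T(d\phi(T))=0$ of that proposition. Your optional self-contained variant simply reproduces the proof of Proposition~\ref{bal4} in this special case, so there is nothing to add.
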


\begin{definition}
Let $\phi:(M,J, \theta)\rightarrow (N,h)$ be a smooth map from a strictly pseudoconvex CR manifold into a Riemannian manifold. We say that\\
 (i) (\cite{IP1}) $\phi$ is J-pluriharmonic, if $B(X,Y)+B(JX,JY)=0$, for any $X,Y\in TM$\\
 (ii) (\cite{IP1}) $\phi$ is H-pluriharmonic, if $B(Z,W)+B(JZ,JW)=0$, for any $Z,W\in H(M)$;\\
 (iii)  $\phi$ is B-pluriharmonic, if $B(T,T)=0$ and $B(Z,W)+B(JZ,JW)=0$, for any $Z,W\in H(M)$;\\
 (iv) (\cite{DK}) $\phi$ is CR pluriharmonic, if $\beta(Z,W)+\beta(JZ,JW)=0$, for any $Z,W\in H(M)$;\\
 (v) (\cite{GIP}) When $(N,h)$ is a K\"{a}hler manifold with complex structure $J'$, $\phi$ is called a CR holomorphic (resp. CR anti-holomorphic) map, if
\be
d\phi\circ J=J'\circ d\phi, \quad  (resp.\quad d\phi\circ J=-J'\circ d\phi).
\ee
\end{definition}

\begin{remark}
(1) The concepts of J-pluriharmonic map and H-pluriharmonic map were introduced by Ianus and Pastore in \cite{IP1} where $J$ and $H(M)$ are denoted by $\varphi$ and $D$ respectively. And they proved that the J-pluriharmonic maps are harmonic. 

(2) Dragomir and Kamishima in \cite{DK} introduced the notion of CR pluriharmonic maps under the name of $\bar{\partial}$-pluriharmonic map, and then they proved that every CR pluriharmonic map is a pseudoharmonic map.

(3) In \cite{GIP} the authors introduced the notion of CR holomorphic map under the name of the $(J,J')$-holomorphic map. They proved that the CR holomorphic map is harmonic. If $(M,g,J)$ is a K\"ahler manifold, $(N,h)$ is a Riemannian manifold and the map $\phi:M \rightarrow N$ satisfies
$$
B(X,Y)+B(JX,JY)=0,
$$
for any $X,Y\in TM$, then the map $\phi$ is called a pluriharmonic map (cf. \cite{Don1, Uda}).
\end{remark}

Obviously, J-pluriharmonicity implies B-pluriharmonicity, and B-pluriharmonicity implies H-pluriharmonicity.
Both J-pluriharmonic maps and B-pluriharmonic maps are harmonic.
By (\ref{pr10}) and (\ref{pr11}), both the CR pluriharmonic map and the H-pluriharmonic map are pseudoharmonic.

\begin{proposition}\label{bap3}
(i) (cf. \cite{DK}) If $\phi:(M,J,\theta)\rightarrow (N,h)$ is CR pluriharmonic, then $\phi$ is a basic and pseudoharmonic map. Moreover, $\phi$ is B-pluriharmonic too.\\
(ii) If $\phi:(M,J,\theta)\rightarrow (N,h)$ is baisc and H-pluriharmonic, then $\phi$ is CR pluriharmonic.
\end{proposition}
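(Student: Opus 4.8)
The plan is to translate the two defining conditions into a single tensorial statement by playing off the two connections $\nabla$ and $\nabla^\theta$ against each other via relation (\ref{pr4}), and then exploiting the fact that a basic map kills every term in the difference $\beta - B$ that involves $\theta$ or $T$. First I would record the key identity: for $X,Y$ arbitrary,
\be
\beta(X,Y) - B(X,Y) = d\phi\big(\nabla^\theta_Y X - \nabla_Y X\big)
= d\phi\Big(\big[\tfrac12\Omega(Y,X) - A(Y,X)\big]T + \tau(Y)\theta(X) + \tfrac12\big(\theta(X)JY + \theta(Y)JX\big)\Big),
\ee
which is just (\ref{pr4}) applied to the pair $(Y,X)$. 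The point is that every term on the right is either a multiple of $d\phi(T)$ or contains a factor $\theta(X)$ or $\theta(Y)$.

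For part (i): assume $\phi$ is CR pluriharmonic, i.e. $\beta(Z,W) + \beta(JZ,JW) = 0$ for all $Z,W \in H(M)$. The first step is basicity, $d\phi(T)=0$. One knows already (Proposition 2.3 combined with the displayed remarks) that CR pluriharmonic implies pseudoharmonic, but that is not enough; for $d\phi(T)=0$ one should trace the CR pluriharmonicity identity appropriately, or invoke the argument of \cite{DK} directly — alternatively, set $W = JZ$ and use that $\beta$ restricted to $H(M)\times H(M)$ has antisymmetric part controlled by the torsion term $\tfrac12\Omega(Z,W)T$ in (\ref{pr5}), so that summing $\beta(Z,JZ)+\beta(JZ,-Z)$ over a unitary frame produces a multiple of $d\phi(T)$ which must vanish. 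Once $d\phi(T)=0$ is in hand, the identity above shows $\beta(Z,W)=B(Z,W)$ for all $Z,W\in H(M)$ (since the $\theta$-factors kill everything and $d\phi(T)=0$ kills the rest), whence $B(Z,W)+B(JZ,JW)=0$; and $B(T,T) = \nabla^h_T(d\phi(T)) - d\phi(\nabla^\theta_T T) = 0$ because $d\phi(T)=0$ and $\nabla^\theta_T T = 0$. Thus $\phi$ is B-pluriharmonic, and pseudoharmonicity is then immediate from (\ref{pr11}).

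For part (ii): assume $\phi$ is basic and H-pluriharmonic, so $d\phi(T)=0$ and $B(Z,W)+B(JZ,JW)=0$ for $Z,W\in H(M)$. Again the identity above with $X=Z$, $Y=W$ in $H(M)$ gives $\beta(Z,W) = B(Z,W)$ directly (the correction term is $[\tfrac12\Omega(W,Z)-A(W,Z)]d\phi(T) = 0$), so the H-pluriharmonic identity for $B$ transfers verbatim to $\beta$, giving CR pluriharmonicity. The only genuine obstacle in the whole proposition is the basicity claim in (i): H-pluriharmonicity of $B$ on $H(M)$ does not obviously force $d\phi(T)=0$, so one must extract it from the CR pluriharmonic hypothesis — the cleanest route is the one indicated above, contracting $\beta(Z,JZ) - \beta(JZ,Z)$ over a local unitary frame and observing via (\ref{pr5}) that this equals $2\sum_\alpha \Omega(e_\alpha, Je_\alpha)\, d\phi(T) = (\text{const})\, d\phi(T)$, which CR pluriharmonicity forces to be zero. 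After that everything is a one-line consequence of (\ref{pr4}).
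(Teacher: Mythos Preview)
Your proof is correct and essentially mirrors the paper's argument: both extract basicity from the antisymmetric part of $\beta$ on $H(M)$ (the paper via $\beta(Z,\bar W)-\beta(\bar W,Z)=\sqrt{-1}\,g_\theta(Z,\bar W)\,d\phi(T)$ with $Z=W\in T_{1,0}M$, you via the equivalent real version $W=JZ$), and both then use the relation (\ref{pr5}) to identify $\beta$ with $B$ on $H(M)\times H(M)$ once $d\phi(T)=0$, which is exactly the content of the paper's identity (\ref{y212}). The sum over a unitary frame in your basicity step is unnecessary --- a single nonzero $Z\in H(M)$ already gives $\beta(Z,JZ)-\beta(JZ,Z)=|Z|^2\,d\phi(T)=0$.
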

\begin{proof}
(i)
For any $Z=X-\sqrt{-1}JX, W=Y-\sqrt{-1}JY\in T_{1,0}M$, we have
\br
\beta(Z,\overline{W})=\beta(X,Y)+\beta(JX,JY)+\sqrt{-1}[\beta(X,JY)-\beta(JX,Y)],
\er
thus we get that $\phi$ is CR pluriharmonic if and only if $(\pi_H\beta)^{(1,1)}=0$. Thus the CR pluriharmonic map is pseudoharmonic.

On the other hand, we have
\br
\nonumber 0&=&\beta(Z,\overline{W})-\beta(\overline{W},Z)\\
\nonumber &=&d\phi(T_{\nabla}(Z,\overline{W}))\\
\nonumber &=&-\Omega(Z,\overline{W})d\phi(T)\\
&=&\sqrt{-1} g_{\theta}(Z,\overline{W})d\phi(T).
\er
If we take $Z=W\neq 0$, then $g_{\theta}(Z,\overline{W})\neq 0$, thus we have $d\phi(T)=0$.

For any $X,Y\in H(M)$, by (\ref{pr5}) and $A(JY,JX)=A(Y,X)$, we have
\br\label{y212}
B(X,Y)+B(JX,JY)=\beta(X,Y)+\beta(JX,JY)-\Omega(Y,X)d\phi(T).
\er
Thus if $\phi$ is baisc, then 
the CR pluriharmonic map $\phi$ is B-pluriharmonic.

(ii) This can be proved by (\ref{y212}).
\end{proof}

\begin{proposition}\label{bap4}
Suppose $\phi: (M,J,\theta)\rightarrow (N,h,J')$ is a CR $\pm$holomorphic map from a strictly pseudoconvex CR manifold $M$ into a K\"{a}hler manifold $N$. Then $\phi$ is CR pluriharmonic.
\end{proposition}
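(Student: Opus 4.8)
The plan is to show directly that $\beta(Z,W)+\beta(JZ,JW)=0$ for all $Z,W\in H(M)$ by first establishing the stronger structural fact that for a CR holomorphic map the $(0,2)$-part of $\pi_H\beta$ vanishes (and for a CR anti-holomorphic map, the $(2,0)$-part vanishes), and then combining this with the fact, already observed in the proof of Proposition \ref{bap3}, that CR pluriharmonicity is equivalent to $(\pi_H\beta)^{(1,1)}=0$. More precisely, I would work with a local unitary frame $\{T_\alpha\}$ of $T_{1,0}M$ and compute $\beta(T_\alpha,T_\beta)$ and $\beta(T_{\bar\alpha},T_{\bar\beta})$ using the defining formula $\beta(X,Y)=\nabla^h_Y(d\phi(X))-d\phi(\nabla_Y X)$ together with the holomorphicity condition $d\phi\circ J=\pm J'\circ d\phi$. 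The point is that $d\phi$ intertwines the CR structure $J$ on $H(M)$ with the Kähler complex structure $J'$ on $TN$, so $d\phi$ maps $T_{1,0}M$ into $T^{1,0}N$ (in the holomorphic case), and since the Tanaka-Webster connection $\nabla$ preserves $T_{1,0}M$ while the Levi-Civita connection $\nabla^h$ of a Kähler manifold preserves $T^{1,0}N$, the quantity $\beta(T_\alpha,T_\beta)$ stays in $T^{1,0}N$ automatically.

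The key step is then to exploit the \emph{non-symmetry} of $\beta$, exactly as was done in Proposition \ref{bap3}(i). We have
\[
\beta(Z,W)-\beta(W,Z)=d\phi(T_\nabla(Z,W))=-\Omega(Z,W)\,d\phi(T),
\]
and for $Z,W\in T_{1,0}M$ one has $\Omega(Z,W)=0$, so $\beta$ is symmetric on $T_{1,0}M\otimes T_{1,0}M$; hence $(\pi_H\beta)^{(2,0)}$ is a symmetric bilinear form valued in $T^{1,0}N$. Next I would differentiate the holomorphicity relation: applying $\nabla^h$ to $d\phi(JX)=J'd\phi(X)$ and using $\nabla^h J'=0$ (Kähler) and $\nabla J=0$ (Tanaka-Webster), one gets $\beta(JX,Y)=J'\beta(X,Y)$ for all $X\in H(M)$, $Y\in TM$. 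Feeding in $X=T_{\bar\alpha}$ (so $JX=-\sqrt{-1}T_{\bar\alpha}$) shows that $\beta(T_{\bar\alpha},\cdot)$ takes values in $T^{0,1}N$; but combined with the previous paragraph, for a CR holomorphic map $\beta(T_{\bar\alpha},T_{\bar\beta})$ must land in $T^{1,0}N\cap T^{0,1}N=\{0\}$ is the wrong intersection — rather, one checks that the $(1,1)$-component $\beta(T_\alpha,T_{\bar\beta})$ is forced to vanish because $\beta(T_\alpha,\cdot)\in T^{1,0}N$ while $\beta(JT_\alpha,T_{\bar\beta})=J'\beta(T_\alpha,T_{\bar\beta})$ gives a consistency constraint, and more directly because $\beta(T_\alpha,T_{\bar\beta})=\nabla^h_{T_{\bar\beta}}(d\phi(T_\alpha))-d\phi(\nabla_{T_{\bar\beta}}T_\alpha)$ lies in $T^{1,0}N$, whereas applying $J$ in the first slot and using the intertwining shows it equals its own image under a sign-reversing operation. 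Carrying this bookkeeping through yields $(\pi_H\beta)^{(1,1)}=0$, which by Proposition \ref{bap3} is precisely CR pluriharmonicity. The anti-holomorphic case is identical with $J'$ replaced by $-J'$ throughout.

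The main obstacle I anticipate is purely organizational rather than conceptual: keeping the type decomposition straight when differentiating the holomorphicity identity, since $\beta$ has a second (differentiation-direction) slot that need not lie in $H(M)$, so one must be careful with the $T$-direction contributions and with the fact that $\beta$ is not symmetric. However, for CR pluriharmonicity only the $H(M)\otimes H(M)$ part of $\beta$ enters, and on that part the torsion correction $-\Omega(Z,W)d\phi(T)$ is the only asymmetry, which is controlled by the same computation used in Proposition \ref{bap3}. A cleaner alternative, which I would mention, is to invoke Proposition \ref{bap3} in reverse form: one first shows a CR $\pm$holomorphic map is basic (because $d\phi\circ J=\pm J'\circ d\phi$ and $JT=0$ force $J'd\phi(T)=0$, hence $d\phi(T)=0$ since $J'$ is an isomorphism), so that $\beta=B$ on $H(M)\otimes H(M)$ by \eqref{y212}, and then the statement reduces to the standard fact that a holomorphic map into a Kähler manifold is pluriharmonic in the Kähler-domain sense — here applied slotwise on $H(M)$ with the Tanaka-Webster connection — which is the classical Kähler identity $B(JX,JY)=-B(X,Y)$ proved by differentiating $d\phi\circ J=\pm J'\circ d\phi$. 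This second route is the one I would actually write up, as it isolates the only real input (the intertwining is parallel because both connections are) in two lines.
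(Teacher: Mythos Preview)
Your second route is exactly the paper's proof: derive $\beta(JX,Y)=J'\beta(X,Y)$ from $\nabla J=0$ and $\nabla^{h}J'=0$, observe that $J'd\phi(T)=d\phi(JT)=0$ forces $d\phi(T)=0$, use basicity to make $\beta$ symmetric on $H(M)\otimes H(M)$, and then chain $\beta(JX,JY)=J'\beta(X,JY)=J'\beta(JY,X)=-\beta(Y,X)=-\beta(X,Y)$. The only difference is that the paper stays with $\beta$ throughout and never passes to $B$; your detour through $B$ is harmless once $\phi$ is basic (since then $\beta=B$ on $H(M)\otimes H(M)$), but be aware that the intertwining identity genuinely uses the Tanaka--Webster connection ($\nabla J=0$), not $\nabla^{\theta}$, so phrasing it as a ``classical K\"ahler identity for $B$'' is slightly misleading.
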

\begin{proof}
Suppose $\phi$ is CR holomorphic map.
For any $X,Y\in H(M)$, we have
\br
\nonumber \beta(JX,Y)&=&\nabla^h_Y(\phi(JX))-d\phi(\nabla_Y JX)\\
\nonumber &=&\nabla^h_Y(J'd\phi(X))-d\phi(J(\nabla_Y X))\\
\nonumber &=&J'\nabla^h_Y (d\phi(X))-J'd\phi(\nabla_Y X)\\
&=&J'\beta(X,Y).
\er
Since $J'd\phi(T)=d\phi(JT)=0$, we get that $\phi$ is baisc. Because of $\beta(X,Y)-\beta(Y,X)=-\Omega(X,Y)d\phi(T)$, we have that $\beta$ is symmetric on $H(M)\otimes H(M)$. Thus we derive
\be
\nonumber \beta(JX,JY)=J'\beta(X,JY)=J'\beta(JY,X)=-\beta(Y,X)=-\beta(X,Y).
\ee
Therefore, the map $\phi$ is CR pluriharmonic.
If $\phi$ is CR anti-holomorphic map, the conclusion can be proved in a similar way.
\end{proof}

\section{Commutative relations}
Let $\phi:(M^{2m+1},J, \theta)\rightarrow (N^n, h)$ be a smooth map from a strictly pseudoconvex CR manifold into a Riemannian manifold. Choose a local orthonormal CR coframe field $\{\theta, \theta^{\alpha}, \theta^{\bar{\alpha}}\}$ on $M$ and a local orthonormal coframe field
$\{\omega^i\}$ on $N$. Throughout this paper we will employ the index conventions
$$A,B,C=0,1,\cdots,m,\bar{1},\cdots,\bar{m},$$
$$\alpha,\beta,\gamma=1,\cdots,m,$$
$$i,j,k=1,\cdots,n,$$
and use the summation convention on repeating indices. The structure equations for the Riemannian connection of $(N,h)$ in terms of local orthonormal frame $\{\omega^i\}$ are
\br
\nonumber d\omega^i=-\omega^i_j\wedge\omega^j,  \quad  \omega^i_j+\omega^j_i=0,\\
\label{cr1} d\omega^i_j=-\omega^i_k\wedge\omega^k_j+\Omega^i_j,
\er
where $\Omega^i_j=\frac{1}{2}\widetilde{R^i_{jkl}}\omega^k\wedge\omega^l$ are the components of the curvature form of $\nabla^h$.

Under the map $\phi:M\rightarrow N$, we have
\be\label{cr3}
\phi^*\omega^i=\phi^i_{\alpha}\theta^{\alpha}+\phi^i_{\bar{\alpha}}\theta^{\bar{\alpha}}+\phi^i_0\theta.
\ee
Hereafter we will drop $\phi^*$ in such formulas when their meaning are clear from context.
By taking the exterior derivative of (\ref{cr3}) and making use of the structure equations (\ref{pr1})-(\ref{pr3}) and (\ref{cr1}), we get
\br\label{cr4}
D\phi^i_B\wedge \theta^B+\sqrt{-1}\phi^i_0\theta^\alpha\wedge\theta^{\bar{\alpha}}-\phi^i_\alpha A_{\bar{\alpha}\bar{\beta}}\theta^{\bar{\beta}}\wedge\theta-\phi^i_{\bar{\alpha}} A_{\alpha\beta}\theta^\beta\wedge\theta=0,
\er
where
\br
\label{cr5}
D\phi^i_{\alpha}&=&d\phi^i_\alpha-\phi^i_\beta\theta^\beta_\alpha+\phi^j_\alpha\omega^i_j=\phi^i_{\alpha B}\theta^B,\\
\label{cr6}D\phi^i_{\bar{\alpha}}&=&d\phi^i_{\bar{\alpha}}-\phi^i_{\bar{\beta}}\theta^{\bar{\beta}}_{\bar{\alpha}}+\phi^j_{\bar{\alpha}}\omega^i_j=\phi^i_{\bar{\alpha} B}\theta^B,\\
\label{cr7}D\phi^i_0&=&d\phi^i_0+\phi^j_0\omega^i_j=\phi^i_{0B}\theta^B.
\er
From (\ref{cr4}) it follows that
\br\label{cr8}
\phi^i_{\alpha \beta}=\phi^i_{\beta \alpha},\quad
\phi^i_{\alpha\bar{\beta}}-\phi^i_{\bar{\beta}\alpha}=\sqrt{-1}\delta_{\alpha\beta}\phi^i_0,\quad
\phi^i_{0\alpha}-\phi^i_{\alpha 0}=\phi^i_{\bar{\beta}}A_{\beta\alpha}.
\er
Then the map $\phi$ is harmonic if and only if
\be\nonumber
\phi^i_{\alpha\bar{\alpha}}+\phi^i_{\bar{\alpha}\alpha}+\phi^i_{00}=0,
\ee
and $\phi$ is pseudoharmonic if and only if
\be\nonumber
\phi^i_{\alpha\bar{\alpha}}+\phi^i_{\bar{\alpha}\alpha}=0.
\ee

Differentiating the equation (\ref{cr5}) and using the structure equations in $M$ and $N$, we have
\be\label{cr9}
D\phi^i_{\alpha B}\wedge \theta^{B}+\sqrt{-1}\phi^i_{\alpha 0}\theta^{\beta}\wedge\theta^{\bar{\beta}}-\phi^i_{\alpha\beta}A_{\bar{\beta}\bar{\gamma}}\theta^{\bar{\gamma}}\wedge\theta-\phi^i_{\alpha\bar{\beta}}A_{\beta\gamma}\theta^{\gamma}\wedge\theta=-\phi^i_{\beta}\Pi^{\beta}_{\alpha}+\phi^j_{\alpha}\Omega^i_j,
\ee
where
\br\nonumber
D\phi^i_{\alpha \beta}&=&d\phi^i_{\alpha\beta}-\phi^i_{\alpha\gamma}\theta^\gamma_\beta-\phi^i_{\gamma\beta}\theta^\gamma_\alpha+\phi^j_{\alpha\beta}\omega^i_j=\phi^i_{\alpha\beta B}\theta^B,\\
\nonumber
D\phi^i_{\alpha \bar{\beta}}&=&d\phi^i_{\alpha\bar{\beta}}-\phi^i_{\alpha\bar{\gamma}}\theta^{\bar{\gamma}}_{\bar{\beta}}-\phi^i_{\gamma \bar{\beta}}\theta^\gamma_\alpha+\phi^j_{\alpha\bar{\beta}}\omega^i_j=\phi^i_{\alpha\bar{\beta}B}\theta^B,\\
\nonumber
D\phi^i_{\alpha 0}&=&d\phi^i_{\alpha0}-\phi^i_{\gamma 0}\theta^{\gamma}_{\alpha}+\phi^j_{\alpha 0}\omega^i_j=\phi^i_{\alpha0B}\theta^B.
\er
From (\ref{cr9}), we get the following commutative relations
\br
\label{cr10}
\phi^i_{\alpha \beta \gamma}&=&\phi^i_{\alpha \gamma \beta}-\phi^j_{\alpha}\phi^k_{\beta}\phi^l_{\gamma}\widehat{R^i_{jkl}}+
\sqrt{-1}\phi^i_{\beta}A_{\alpha\gamma}-\sqrt{-1}\phi^i_{\gamma}A_{\alpha\gamma},\\
\label{cr16}
\phi^i_{\alpha \bar{\beta} \bar{\gamma}}&=&\phi^i_{\alpha \bar{\gamma} \bar{\beta}}-\phi^j_{\alpha}\phi^k_{\bar{\beta}}\phi^l_{\bar{\gamma}}\widehat{R^i_{jkl}}+
\sqrt{-1}\delta_{\alpha\beta}\phi^i_{\lambda}A_{\bar{\lambda}\bar{\gamma}}-\sqrt{-1}\delta_{\alpha \gamma}\phi^i_{\lambda}A_{\bar{\lambda}\bar{\beta}},\\
\label{cr17}
\phi^i_{\alpha \beta \bar{\gamma}}&=&\phi^i_{\alpha \bar{\gamma} \beta}-\phi^j_{\alpha}\phi^k_{\beta}\phi^l_{\bar{\gamma}}\widehat{R^i_{jkl}}+\phi^i_{\lambda}R^{\lambda}_{\alpha \beta \bar{\gamma}}+\sqrt{-1}\delta_{\beta \gamma}\phi^i_{\alpha 0},\\
\phi^i_{\alpha \beta 0}&=&\phi^i_{\alpha 0 \beta}-\phi^j_{\alpha}\phi^k_{\beta}\phi^l_0\widehat{R^i_{jkl}}+\phi^i_{\gamma}A_{\alpha \beta,\gamma}-\phi^i_{\alpha \bar{\gamma}}A_{\gamma\beta},\\
\label{cr11}
\phi^i_{\alpha \bar{\beta} 0}&=&\phi^i_{\alpha 0 \bar{\beta}}-\phi^j_{\alpha}\phi^k_{\bar{\beta}}\phi^l_0\widehat{R^i_{jkl}}-\phi^i_{\gamma}A_{\bar{\beta} \bar{\gamma},\alpha}-\phi^i_{\alpha \gamma}A_{\bar{\gamma}\bar{\beta}},
\er
where $\widehat{R^i_{jkl}}=\widetilde{R^i_{jkl}}\circ\phi$.

Since the formula (\ref{cr6}) is the complex conjugate of (\ref{cr5}), then, after taking the exterior derivative of (\ref{cr6}) and using the structure equations, we find that the complex conjugate of formulas (\ref{cr10})-(\ref{cr11}) are valid too.

Similarly the exterior derivative of (\ref{cr7}) yields that
\be\label{cr15}
D\phi^i_{0B}\wedge\theta^B+\sqrt{-1}\phi^i_0\theta^\alpha\wedge\theta^{\bar{\alpha}}-\phi^i_\alpha A_{\bar{\alpha}\bar{\beta}}\theta^{\bar{\beta}}\wedge\theta-\phi^i_{\bar{\alpha}} A_{\alpha\beta}\theta^\beta\wedge\theta=\phi^j_0\Omega^i_j,
\ee
where
\br\nonumber
D\phi^i_{0\alpha}&=&d\phi^i_{0\alpha}-\phi^i_{0\beta}\theta^{\beta}_{\alpha}+\phi^j_{0\alpha}\omega^i_j=\phi^i_{0\alpha B}\theta^B,\\
\nonumber
D\phi^i_{0\bar{\alpha}}&=&d\phi^i_{0\bar{\alpha}}-\phi^i_{0\bar{\beta}}\theta^{\bar{\beta}}_{\bar{\alpha}}+\phi^j_{0\bar{\alpha}}\omega^i_j=\phi^i_{0\bar{\alpha}B}\theta^B,\\
\nonumber
D\phi^i_{00}&=&d\phi^i_{00}+\phi^j_{00}\omega^i_j=\phi^i_{00B}\theta^B.
\er
We get from (\ref{cr15}) the commutative relations
\br
\phi^i_{0 \alpha \beta}&=&\phi^i_{0 \beta \alpha}-\phi^j_0
\phi^k_{\alpha}\phi^l_{\beta}\widehat{R^i_{jkl}},\\
\phi^i_{0 \alpha \bar{\beta}}&=&\phi^i_{0 \bar{\beta} \alpha}-\phi^j_0 \phi^k_{\alpha} \phi^l_{\bar{\beta}}\widehat{R^i_{jkl}}+\sqrt{-1}\delta_{\alpha \beta}\phi^i_{00},\\
\phi^i_{00\alpha}&=&\phi^i_{0 \alpha 0}-\phi^j_0 \phi^k_0 \phi^l_{\alpha}\widehat{R^i_{jkl}}+\phi^i_{0 \bar{\beta}}A_{\beta \alpha}.
\er

From (\ref{cr8}), we can derive:
\br\label{cr18}
\phi^i_{\alpha \bar{\beta} \gamma}&=&\phi^i_{\bar{\beta} \alpha \gamma}+\sqrt{-1}\delta_{\alpha\beta}\phi^i_{0 \gamma},\\
\label{cr19}
\phi^i_{\alpha \bar{\beta} \bar{\gamma}}&=&\phi^i_{\bar{\beta} \alpha \bar{\gamma}}+\sqrt{-1}\delta_{\alpha\beta}\phi^i_{0 \bar{\gamma}},\\
\phi^i_{0 \alpha \beta}&=&\phi^i_{\alpha 0 \beta}+\phi^i_{\bar{\gamma}\beta}A_{\gamma\alpha}+\phi^i_{\bar{\gamma}}A_{\gamma\alpha,\beta},\\
\label{cr21}\phi^i_{0 \alpha \bar{\beta}}&=&\phi^i_{\alpha 0 \bar{\beta}}+\phi^i_{\bar{\gamma}\bar{\beta}}A_{\gamma\alpha}+\phi^i_{\bar{\gamma}}A_{\gamma\alpha,\bar{\beta}}.
\er

If $(N,h)$ is a K\"{a}hler manifold, we choose a local orthonormal coframe field $\{\omega^i,\omega^{\bar{i}}\}$ on $N$.
The structure equations for the Riemannian connection of $(N,h)$ in terms of local orthonormal frame $\{\omega^i,\omega^{\bar{i}}\}$ are
\br
\begin{aligned}
d\omega^i=-\omega^i_j\wedge\omega^j,  \quad  \omega^i_j+\omega^{\bar{j}}_{\bar{i}}=0,\\
d\omega^i_j=-\omega^i_k\wedge\omega^k_j+\Omega^i_j,
\end{aligned}
\er
where $\Omega^i_j=\widetilde{R^i_{jk\overline{l}}} \omega^k\wedge\omega^{\bar{l}}$. Similar to the above discussions, we may obtain the following commutative formula:
\br\label{cr23}
\phi^i_{\alpha \bar{\beta} \bar{\gamma}}&=&\phi^i_{\alpha \bar{\gamma} \bar{\beta}}-\phi^j_{\alpha}\phi^k_{\bar{\beta}}\phi^{\bar{l}}_{\bar{\gamma}}\widehat{R^i_{jk\overline{l}}}+\phi^j_{\alpha}\phi^k_{\bar{\gamma}}\phi^{\bar{l}}_{\bar{\beta}}\widehat{R^i_{jk\overline{l}}} +\sqrt{-1}\delta_{\alpha\beta}\phi^i_{\lambda}A_{\bar{\lambda}\bar{\gamma}}-\sqrt{-1}\delta_{\alpha \gamma}\phi^i_{\lambda}A_{\bar{\lambda}\bar{\beta}}.
\er

\section{CR Bochner type result}

Let $(M^{2m+1},J,\theta)$ be a compact strictly pseudoconvex CR manifold. In \cite{GL, Lee} the authors introduced the following differential operator acting on functions
$$
Pf=\sum(f_{\bar{\alpha}\alpha\beta}+\sqrt{-1}mA_{\beta \alpha}f_{\bar{\alpha}})\theta^{\beta}=(P_{\beta}f)\theta^{\beta},
$$
which charecterizes CR pluriharmonic functions on $M$. In \cite{CCh} S.-C. Chang and H.-L.Chiu discussed the CR Paneitz operator
$$
P_0f=4[\delta_b(Pf)+\bar{\delta_b}(\bar{P}f)],
$$
where $\delta_b$ is the divergence operator that take $(1,0)$-forms to functions, and they proved that when $m\geq 2$, the corresponding CR Paneitz operator is always nonnegative, that is
$$\int_M P_0f\cdot f\Psi\geq 0,$$
where $\Psi$ is the volume form of $g_{\theta}$.

Now we want to generalize the operator $P$ to an operator, still denoted by $P$, acting on maps from strictly pseudoconvex CR manifolds into Riemannian manifolds. We will establish similar nonnegative property for the generalized operator P under suitable condition.
Suppose $\phi:(M^{2m+1},\theta,J)\rightarrow (N^n, h)$ is a smooth map from a strictly pseudoconvex CR manifold $M$ into a Riemannian manifold $N$.
We choose a local orthonormal CR coframe field $\{\theta,\theta^{\alpha},\theta^{\bar{\alpha}}\}$ on $M$, a local orthonormal frame field $\{E_{i}\}$ on $N$.
We still use the notaions of the last section. Define
$$
P\phi=(P^j_{\beta}\phi)\theta^{\beta}\otimes E_j,
$$
where $P^j_{\beta}\phi=\phi^j_{\bar{\alpha}\alpha\beta}+\sqrt{-1}mA_{\beta\alpha}\phi^j_{\bar{\alpha}}$.

Let
\be
\theta_{W_1}=\phi^i_{\alpha}\phi^i_{\bar{\alpha}\bar{\beta}}\theta^{\bar{\beta}}+\phi^i_{\bar{\alpha}}\phi^i_{\alpha\beta}\theta^{\beta}+\phi^i_{\alpha}\phi^i_{\bar{\alpha}\beta}\theta^{\beta}+\phi^i_{\bar{\alpha}}\phi^i_{\alpha\bar{\beta}}\theta^{\bar{\beta}}.
\ee
Evidently the 1-form $\theta_{W_1}$, which is a well-defined on $M$, is the 1-form corresponding to the horizontal gradient $\nabla^H(e_H(\phi))$ of $e_H(\phi)$, where $\nabla^H(e_H(\phi))=\Pi_H\nabla(e_H(\phi))$ and $g_{\theta}(\nabla e_H(\phi),X)=X(e_H(\phi))$ for any $X\in \chi(M)$.

\begin{lemma}
Set $\w{R_{ijkl}}=g_{ip}\w{R^p_{jkl}}=\delta_{ip}\w{R^p_{jkl}}=\w{R^i_{jkl}}$. Then
\begin{eqnarray}
\nonumber div\theta_{W_1}&=&2(|\phi^i_{\alpha\beta}|^2+|\phi^i_{\alpha\bar{\beta}}|^2)+\l\l d_b\phi, \nabla_b \tau(\phi) \r\r +2\phi^i_{\alpha}\phi^i_{\bar{\beta}}Ric_{\bar{\alpha}\beta}\\
\nonumber &&-\sqrt{-1}m(\phi^i_{\alpha}\phi^i_{\beta}A_{\bar{\alpha}\bar{\beta}}-\phi^i_{\bar{\alpha}}\phi^i_{\bar{\beta}}A_{\alpha\beta})
-2\sqrt{-1}(\phi^i_{\alpha}\phi^i_{\bar{\alpha}0}-\phi^i_{\bar{\alpha}}\phi^i_{\alpha0})\\
\label{bo3}
&&-2(\phi^i_{\bar{\alpha}}\phi^j_{\beta}\phi^k_{\alpha}\phi^l_{\bar{\beta}}\widehat{R_{ijkl}}
+\phi^i_{\alpha}\phi^j_{\beta}\phi^k_{\bar{\alpha}}\phi^l_{\bar{\beta}}\widehat{R_{ijkl}}),
\end{eqnarray}
where $\nabla_b\tau(\phi)=(\phi^i_{\alpha\bar{\alpha}\beta}+\phi^i_{\bar{\alpha}\alpha\beta})\theta^{\beta}\otimes E_i+(\phi^i_{\alpha\bar{\alpha}\bar{\beta}}+\phi^i_{\bar{\alpha}\alpha\bar{\beta}})\theta^{\bar{\beta}}\otimes E_i$, and $\l\l \cdot, \cdot\r\r$ is the metric in $T^*M\otimes \phi^{-1}TN$ induced by $g_{\theta}$ and $h$.
\end{lemma}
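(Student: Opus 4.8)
The plan is to compute $\mathrm{div}\,\theta_{W_1}$ directly in the unitary coframe $\{\theta^{\alpha},\theta^{\bar\alpha}\}$ by the Leibniz rule, and then to bring the resulting third-order terms into normal form using the commutation relations of Section 3. Write $\theta_{W_1}=W_{\beta}\theta^{\beta}+W_{\bar\beta}\theta^{\bar\beta}$ with $W_{\beta}=\phi^i_{\bar\alpha}\phi^i_{\alpha\beta}+\phi^i_{\alpha}\phi^i_{\bar\alpha\beta}$ and $W_{\bar\beta}=\overline{W_{\beta}}$, so that $\theta_{W_1}$ is a real horizontal $1$-form. By Lemma \ref{prl3} (applied to the metric dual), $\mathrm{div}\,\theta_{W_1}=W_{\beta;\bar\beta}+W_{\bar\beta;\beta}$, where the semicolon denotes covariant differentiation with respect to the Tanaka-Webster connection and the contraction is taken with the Levi form (for which $h_{\alpha\bar\beta}=\delta_{\alpha\beta}$ in our frame). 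Since $W_{\bar\beta;\beta}=\overline{W_{\beta;\bar\beta}}$, it suffices in principle to treat one of the two sums, and reality of the final answer will serve as a running consistency check.

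First I would apply the Leibniz rule to $W_{\beta}$ and $W_{\bar\beta}$. This splits $\mathrm{div}\,\theta_{W_1}$ into two types of contributions: a part coming from pairing a first derivative of $\phi$ with a second derivative, which assembles into $2(|\phi^i_{\alpha\beta}|^2+|\phi^i_{\alpha\bar\beta}|^2)$ after using $\phi^i_{\alpha\beta}=\phi^i_{\beta\alpha}$; and third-order terms of the shape $\phi^i_{\bar\alpha}\phi^i_{\alpha\beta\bar\beta}$, $\phi^i_{\alpha}\phi^i_{\bar\alpha\beta\bar\beta}$ and their conjugates, in which the derivatives of $\phi$ are not yet contracted the way they appear in $\nabla_b\tau(\phi)$.

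The heart of the argument is the reordering of these third-order terms. Using (\ref{cr17}) and its conjugate to commute $\phi^i_{\alpha\beta\bar\gamma}\leftrightarrow\phi^i_{\alpha\bar\gamma\beta}$ generates three kinds of extra terms: the pseudo-Hermitian curvature term $\phi^i_{\lambda}R^{\lambda}_{\alpha\beta\bar\gamma}$, which after contraction and the identity $R^{\alpha}_{\alpha\lambda\bar\mu}=R_{\lambda\bar\mu}$ yields $2\phi^i_{\alpha}\phi^i_{\bar\beta}Ric_{\bar\alpha\beta}$; the target curvature term $-\phi^j_{\alpha}\phi^k_{\beta}\phi^l_{\bar\gamma}\widehat{R^i_{jkl}}$, which together with the analogous contributions from the conjugate relations gives the two quartic terms $-2(\phi^i_{\bar\alpha}\phi^j_{\beta}\phi^k_{\alpha}\phi^l_{\bar\beta}\widehat{R_{ijkl}}+\phi^i_{\alpha}\phi^j_{\beta}\phi^k_{\bar\alpha}\phi^l_{\bar\beta}\widehat{R_{ijkl}})$; and the torsion term $\sqrt{-1}\delta_{\beta\gamma}\phi^i_{\alpha0}$. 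I would then use the relations (\ref{cr18})--(\ref{cr21}) together with the first-order identities (\ref{cr8}) to pass between the $\phi^i_{\alpha\bar\beta\gamma}$- and $\phi^i_{\bar\beta\alpha\gamma}$-orderings and to eliminate the derivatives of $\phi^i_0$; the $\phi^i_{\alpha0}$-type terms that survive collect into $-2\sqrt{-1}(\phi^i_{\alpha}\phi^i_{\bar\alpha0}-\phi^i_{\bar\alpha}\phi^i_{\alpha0})$, while commuting $\phi^i_{\alpha\bar\beta0}$ through (\ref{cr11}) and inserting $W^{\beta}_{\alpha\bar\gamma}=h^{\bar\delta\beta}A_{\bar\gamma\bar\delta,\alpha}$ from (\ref{pr3}) produces, after tracing the Kronecker deltas over $\alpha=1,\dots,m$, the torsion term $-\sqrt{-1}m(\phi^i_{\alpha}\phi^i_{\beta}A_{\bar\alpha\bar\beta}-\phi^i_{\bar\alpha}\phi^i_{\bar\beta}A_{\alpha\beta})$ --- this trace over the $m$ holomorphic directions is exactly where the coefficient $m$ enters. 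Whatever is left of the third-order terms is, by construction, $\langle\langle d_b\phi,\nabla_b\tau(\phi)\rangle\rangle$.

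Finally I would collect everything and check it against (\ref{bo3}). I expect the main obstacle to be bookkeeping rather than any genuinely new idea: torsion terms in $A_{\alpha\beta}$ and $A_{\alpha\beta,\gamma}$ are produced at several independent stages of the reordering, and one must verify that all of them cancel in pairs except for the single expression carrying the coefficient $\sqrt{-1}m$. Keeping, at every step, the reality of $\mathrm{div}\,\theta_{W_1}$ (equivalently $W_{\bar\beta;\beta}=\overline{W_{\beta;\bar\beta}}$) and the symmetry $\phi^i_{\alpha\beta}=\phi^i_{\beta\alpha}$ as consistency checks is the most reliable way to catch sign and index errors; no input beyond the commutation relations already established in Section 3 is needed.
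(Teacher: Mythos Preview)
Your approach is essentially identical to the paper's: expand $\mathrm{div}\,\theta_{W_1}$ by the Leibniz rule via Lemma~\ref{prl3}, then reorder the four third-order terms using the commutation relations of Section~3. The paper carries this out in exactly the same way, only more tersely.

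One small correction of bookkeeping: the factor $m$ in the torsion term does not arise from (\ref{cr11}) (the $\phi^i_{\alpha\bar\beta 0}$ relation), which contains no Kronecker delta. It comes instead from (\ref{cr16}) and its conjugate: when you commute $\phi^i_{\beta\bar\alpha\bar\beta}\leftrightarrow\phi^i_{\beta\bar\beta\bar\alpha}$ (after first passing through (\ref{cr18})/(\ref{cr19})), the term $-\sqrt{-1}\delta_{\alpha\gamma}\phi^i_\lambda A_{\bar\lambda\bar\beta}$ becomes, with both repeated indices traced, a sum $\sum_\beta\delta_{\beta\beta}=m$. The residual $\phi^i_{0\bar\gamma}$ terms produced along the way are then converted via the last identity in (\ref{cr8}), contributing the remaining pieces of both the torsion term and the $-2\sqrt{-1}(\phi^i_\alpha\phi^i_{\bar\alpha 0}-\phi^i_{\bar\alpha}\phi^i_{\alpha 0})$ term. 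Relation (\ref{cr11}) is not actually needed in this computation, since none of the four third-order contributions carries a $0$-index. This does not affect the validity of your plan, only the labeling of which commutation step does what.
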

\begin{proof}
Using Lemma \ref{prl3} and the commutative relations in Section 3, we compute
\begin{eqnarray*}
div \theta_{W_1}&=&(\phi^i_{\alpha}\phi^i_{\bar{\alpha}\bar{\beta}}),_{\beta}+(\phi^i_{\bar{\alpha}}\phi^i_{\alpha\beta}),_{\bar{\beta}}
+(\phi^i_{\alpha}\phi^i_{\bar{\alpha}\beta}),_{\bar{\beta}}+(\phi^i_{\bar{\alpha}}\phi^i_{\alpha\bar{\beta}}),_{\beta}\\ &=&\phi^i_{\alpha\beta}\phi^i_{\bar{\alpha}\bar{\beta}}+\phi^i_{\alpha}\phi^i_{\bar{\alpha}\bar{\beta}\beta}+\phi^i_{\bar{\alpha}\bar{\beta}}\phi^i_{\alpha\beta}
+\phi^i_{\bar{\alpha}}\phi^i_{\alpha\beta\bar{\beta}}+\phi^i_{\alpha\bar{\beta}}\phi^i_{\bar{\alpha}\beta}+\phi^i_{\alpha}\phi^i_{\bar{\alpha}\beta\bar{\beta}}\\
&&+\phi^i_{\bar{\alpha}\beta}\phi^i_{\alpha\bar{\beta}}+\phi^i_{\bar{\alpha}}\phi^i_{\alpha\bar{\beta}\beta}\\
&=&2(|\phi^i_{\alpha\beta}|^2+|\phi^i_{\alpha\bar{\beta}}|^2)+\l\l d_b\phi, \nabla_b \tau(\phi) \r\r +2\phi^i_{\alpha}\phi^i_{\bar{\beta}}Ric_{\bar{\alpha}\beta}-\sqrt{-1}m(\phi^i_{\alpha}\phi^i_{\beta}A_{\bar{\alpha}\bar{\beta}}\\
&&-\phi^i_{\bar{\alpha}}\phi^i_{\bar{\beta}}A_{\alpha\beta})-2\sqrt{-1}(\phi^i_{\alpha}\phi^i_{\bar{\alpha}0}-\phi^i_{\bar{\alpha}}\phi^i_{\alpha0})
-2(\phi^i_{\bar{\alpha}}\phi^j_{\beta}\phi^k_{\alpha}\phi^l_{\bar{\beta}}\widehat{R_{ijkl}}
+\phi^i_{\alpha}\phi^j_{\beta}\phi^k_{\bar{\alpha}}\phi^l_{\bar{\beta}}\widehat{R_{ijkl}}).
\end{eqnarray*}
\end{proof}

From (\ref{cr8}), (\ref{cr17}) and (\ref{cr19}), we get immediately the following Lemmas 4.2, 4.3 and 4.4.
\begin{lemma}
\br
\nonumber \sqrt{-1}(\phi^i_{\alpha}\phi^i_{\bar{\alpha}0}-\phi^i_{\bar{\alpha}}\phi^i_{\alpha 0})&=&\frac{2}{m}\l\l P\phi+\overline{P\phi}, d_b\phi\r\r-\frac{1}{m}\l\l d_b\phi,\nabla_b\tau(\phi)\r\r\\
\label{bo4}& &+\sqrt{-1}(\phi^i_{\alpha}\phi^i_{\beta}A_{\bar{\alpha}\bar{\beta}}-\phi^i_{\bar{\alpha}}\phi^i_{\bar{\beta}}A_{\alpha\beta}).
\er
\end{lemma}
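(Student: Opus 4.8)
The plan is to trade the ``vertical'' mixed derivatives $\phi^i_{\alpha 0}$ and $\phi^i_{\bar\alpha 0}$ for $P\phi$, $\overline{P\phi}$, $\nabla_b\tau(\phi)$ and the pseudo-Hermitian torsion, using only the first- and second-order commutation relations of Section~3. The key first step is the trace identity
\[
\sqrt{-1}\,m\,\phi^i_{\gamma 0}=\phi^i_{\alpha\bar\alpha\gamma}-P^i_\gamma\phi .
\]
Indeed, contracting $\beta=\gamma$ in (\ref{cr18}) gives $\phi^i_{\alpha\bar\alpha\gamma}=\phi^i_{\bar\alpha\alpha\gamma}+\sqrt{-1}\,m\,\phi^i_{0\gamma}$; substituting $\phi^i_{0\gamma}=\phi^i_{\gamma 0}+A_{\beta\gamma}\phi^i_{\bar\beta}$ from (\ref{cr8}) and recalling $P^i_\gamma\phi=\phi^i_{\bar\alpha\alpha\gamma}+\sqrt{-1}\,m\,A_{\gamma\alpha}\phi^i_{\bar\alpha}$, the two torsion terms cancel by symmetry of $A$, and the identity follows. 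Taking the complex conjugate (equivalently, contracting (\ref{cr19})) gives $\sqrt{-1}\,m\,\phi^i_{\bar\gamma 0}=-\phi^i_{\bar\alpha\alpha\bar\gamma}+\overline{P^i_\gamma\phi}$.

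Next I would substitute these two expressions into $\sqrt{-1}\bigl(\phi^i_\alpha\phi^i_{\bar\alpha 0}-\phi^i_{\bar\alpha}\phi^i_{\alpha 0}\bigr)$ and divide by $m$. This produces $\f1m\bigl(\phi^i_\alpha\overline{P^i_\alpha\phi}+\phi^i_{\bar\alpha}P^i_\alpha\phi\bigr)=\f1m\l\l P\phi+\overline{P\phi},d_b\phi\r\r$, together with a leftover $-\f1m\bigl(\phi^i_\gamma\phi^i_{\bar\alpha\alpha\bar\gamma}+\phi^i_{\bar\gamma}\phi^i_{\alpha\bar\alpha\gamma}\bigr)$ of third derivatives. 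Comparing the leftover with the definition
\[
\l\l d_b\phi,\nabla_b\tau(\phi)\r\r=\phi^i_\gamma\bigl(\phi^i_{\alpha\bar\alpha\bar\gamma}+\phi^i_{\bar\alpha\alpha\bar\gamma}\bigr)+\phi^i_{\bar\gamma}\bigl(\phi^i_{\alpha\bar\alpha\gamma}+\phi^i_{\bar\alpha\alpha\gamma}\bigr),
\]
one sees that it equals $-\f1m\l\l d_b\phi,\nabla_b\tau(\phi)\r\r+\f1m\bigl(\phi^i_\gamma\phi^i_{\alpha\bar\alpha\bar\gamma}+\phi^i_{\bar\gamma}\phi^i_{\bar\alpha\alpha\gamma}\bigr)$.

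Finally, to handle $\phi^i_\gamma\phi^i_{\alpha\bar\alpha\bar\gamma}+\phi^i_{\bar\gamma}\phi^i_{\bar\alpha\alpha\gamma}$ I would invoke the definition of $P$ a second time, in the form $\phi^i_{\bar\alpha\alpha\gamma}=P^i_\gamma\phi-\sqrt{-1}\,m\,A_{\gamma\alpha}\phi^i_{\bar\alpha}$ and its conjugate $\phi^i_{\alpha\bar\alpha\bar\gamma}=\overline{P^i_\gamma\phi}+\sqrt{-1}\,m\,A_{\bar\gamma\bar\alpha}\phi^i_\alpha$; this makes it equal to $\l\l P\phi+\overline{P\phi},d_b\phi\r\r+\sqrt{-1}\,m\,\bigl(\phi^i_\alpha\phi^i_\beta A_{\bar\alpha\bar\beta}-\phi^i_{\bar\alpha}\phi^i_{\bar\beta}A_{\alpha\beta}\bigr)$. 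Assembling the three steps, the two occurrences of $\f1m\l\l P\phi+\overline{P\phi},d_b\phi\r\r$ combine to the coefficient $\f2m$ and the stated formula drops out.

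The only genuinely delicate point is the index bookkeeping in the middle step: the third derivatives $\phi^i_{\alpha\bar\alpha\gamma}$ and $\phi^i_{\bar\alpha\alpha\gamma}$ look alike but differ (by $\sqrt{-1}\,m\,\phi^i_{0\gamma}$, via (\ref{cr18})), so they must be kept apart, and the free index of each third derivative has to be matched with the correct component of $d_b\phi$ in order to reconstitute $\l\l d_b\phi,\nabla_b\tau(\phi)\r\r$. I would also stress that the coefficient $\f2m$ is not a miscount: $P^i_\gamma\phi$ honestly enters twice -- once when $\phi^i_{\gamma 0}$ is eliminated and once when the residual $\phi^i_{\bar\alpha\alpha\gamma}$ is rewritten -- and the torsion terms on the right-hand side are precisely the mismatch between the ``raw'' third derivative $\phi^i_{\bar\alpha\alpha\gamma}$ and $P^i_\gamma\phi$.
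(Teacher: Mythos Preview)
Your proposal is correct and is essentially the same approach as the paper's. The paper does not write out a proof but merely says the identity follows ``immediately'' from the commutation relations (\ref{cr8}), (\ref{cr17}) and (\ref{cr19}); your argument is precisely a careful unpacking of this, using (\ref{cr18}) (equivalently (\ref{cr19}) and its conjugate, both consequences of (\ref{cr8})) together with the third relation in (\ref{cr8}) and the definition of $P$. The relation (\ref{cr17}) is in fact not needed for this particular lemma --- it enters only in Lemma~4.4 where the pseudo-Hermitian Ricci tensor appears --- so your bookkeeping is exactly right.
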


Thus we have
\begin{corollary}
\begin{eqnarray}
\nonumber div\theta_{W_0}&=&2(|\phi^i_{\alpha\beta}|^2+|\phi^i_{\alpha\bar{\beta}}|^2)+(1+\frac{2}{m})\l\l d_b\phi, \nabla_b \tau(\phi) \r\r +2\phi^i_{\alpha}\phi^i_{\bar{\beta}}Ric_{\bar{\alpha}\beta}\\
\nonumber &&-\sqrt{-1}(m+2)(\phi^i_{\alpha}\phi^i_{\beta}A_{\bar{\alpha}\bar{\beta}}
-\phi^i_{\bar{\alpha}}\phi^i_{\bar{\beta}}A_{\alpha\beta})-\frac{4}{m}\l\l P\phi+\overline{P\phi}, \nabla_b \phi \r\r\\
\label{bo31}
&&-2(\phi^i_{\bar{\alpha}}\phi^j_{\beta}\phi^k_{\alpha}\phi^l_{\bar{\beta}}\widehat{R_{ijkl}}
+\phi^i_{\alpha}\phi^j_{\beta}\phi^k_{\bar{\alpha}}\phi^l_{\bar{\beta}}\widehat{R_{ijkl}}),
\end{eqnarray}
\end{corollary}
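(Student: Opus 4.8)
The plan is to read off (\ref{bo31}) from the two immediately preceding results by a purely algebraic manipulation; no new geometric input is needed, since all of the structure equations and commutation relations of Section~3 have already been consumed in proving Lemma~4.1 and Lemma~4.2. Comparing the target formula (\ref{bo31}) with the divergence formula (\ref{bo3}) of Lemma~4.1, one sees that the only difference is that the ``mixed'' term $-2\sqrt{-1}(\phi^i_\alpha\phi^i_{\bar\alpha 0}-\phi^i_{\bar\alpha}\phi^i_{\alpha 0})$ has been eliminated in favour of a $P\phi$-term together with an adjustment of two coefficients; accordingly $\theta_{W_0}$ is the $1$-form obtained from $\theta_{W_1}$ by the corresponding $P\phi$/torsion correction, and the proof reduces to substituting the identity (\ref{bo4}) of Lemma~4.2 into (\ref{bo3}).

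Concretely, I would first rescale (\ref{bo4}) by $-2$, obtaining
\[
-2\sqrt{-1}(\phi^i_\alpha\phi^i_{\bar\alpha 0}-\phi^i_{\bar\alpha}\phi^i_{\alpha 0})
=-\tfrac{4}{m}\langle\langle P\phi+\overline{P\phi},\,d_b\phi\rangle\rangle
+\tfrac{2}{m}\langle\langle d_b\phi,\nabla_b\tau(\phi)\rangle\rangle
-2\sqrt{-1}(\phi^i_\alpha\phi^i_\beta A_{\bar\alpha\bar\beta}-\phi^i_{\bar\alpha}\phi^i_{\bar\beta}A_{\alpha\beta}),
\]
and insert the right-hand side in place of the mixed term in the divergence formula (\ref{bo3}). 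Then I would collect like terms: the second-order term $2(|\phi^i_{\alpha\beta}|^2+|\phi^i_{\alpha\bar\beta}|^2)$, the Ricci term $2\phi^i_\alpha\phi^i_{\bar\beta}Ric_{\bar\alpha\beta}$, and the curvature quartic $-2(\phi^i_{\bar\alpha}\phi^j_\beta\phi^k_\alpha\phi^l_{\bar\beta}\widehat{R_{ijkl}}+\phi^i_\alpha\phi^j_\beta\phi^k_{\bar\alpha}\phi^l_{\bar\beta}\widehat{R_{ijkl}})$ carry over unchanged; the coefficient of $\langle\langle d_b\phi,\nabla_b\tau(\phi)\rangle\rangle$ becomes $1+\tfrac{2}{m}$; the two torsion-quadratic contributions — the $-\sqrt{-1}m(\cdots)$ already present in (\ref{bo3}) and the new $-2\sqrt{-1}(\cdots)$ — combine to $-\sqrt{-1}(m+2)(\phi^i_\alpha\phi^i_\beta A_{\bar\alpha\bar\beta}-\phi^i_{\bar\alpha}\phi^i_{\bar\beta}A_{\alpha\beta})$; and a new term $-\tfrac{4}{m}\langle\langle P\phi+\overline{P\phi},\nabla_b\phi\rangle\rangle$ appears, with $d_b\phi$ and $\nabla_b\phi$ denoting the same horizontal differential $\phi^i_\alpha\theta^\alpha\otimes E_i+\phi^i_{\bar\alpha}\theta^{\bar\alpha}\otimes E_i$. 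This is precisely (\ref{bo31}).

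Since Lemmas~4.1 and 4.2 do all of the analytic work, there is no genuine obstacle; the only points requiring care are the sign- and $m$-bookkeeping in the two torsion contributions and the factor $2$ coming from rescaling (\ref{bo4}), so that the torsion coefficient lands on $-\sqrt{-1}(m+2)$ rather than, say, $-\sqrt{-1}(m-2)$, and a quick check that (\ref{bo4}), being an exact identity, accounts for the mixed term completely, leaving no stray first-order term behind.
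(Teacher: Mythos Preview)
Your approach is exactly the paper's: the corollary is obtained by substituting the pointwise identity (\ref{bo4}) of Lemma~4.2 into the divergence formula (\ref{bo3}) of Lemma~4.1 and collecting terms, as your bookkeeping correctly shows. One small clarification: since (\ref{bo4}) is a pointwise algebraic identity rather than a divergence, the left-hand side is unchanged by the substitution, so $\theta_{W_0}$ is not a ``corrected'' $1$-form but simply coincides with $\theta_{W_1}$ (the paper's subsequent Remark, which refers to both (\ref{bo3}) and (\ref{bo31}) as formulas for $\Delta_b(e_H(\phi))=div\,\theta_{W_1}$, confirms this).
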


\begin{remark}
Since $div\theta_{W_1}=\Delta_b(e_H(\phi))$, the formula (\ref{bo3}) and (\ref{bo31}) are both called the CR Bochner formulae.
\end{remark}

Integrating both sides of (\ref{bo4}) and using the divergence theorem, we get
\br
\label{bo7}
\nonumber \sqrt{-1}\int_M (\phi^i_{\alpha}\phi^i_{\bar{\alpha}0}-\phi^i_{\bar{\alpha}}\phi^i_{\alpha 0})\Psi&=&\frac{2}{m}\int_M \l\l P\phi+\overline{P\phi}, d_b\phi\r\r\Psi+\frac{1}{m}\int_M |\tau(\phi)|^2\Psi\\
&&+\sqrt{-1}\int_M (\phi^i_{\alpha}\phi^i_{\beta}A_{\bar{\alpha}\bar{\beta}}-\phi^i_{\bar{\alpha}}\phi^i_{\bar{\beta}}A_{\alpha\beta})\Psi.
\er

\begin{lemma}
\be\label{bo81}
\sqrt{-1}\int_M (\phi^i_{\alpha}\phi^i_{\bar{\alpha}0}-\phi^i_{\bar{\alpha}}\phi^i_{\alpha 0})\Psi=m\int_M(\phi^i_0)^2\Psi-
\sqrt{-1}\int_M(\phi^i_{\alpha}\phi^i_{\beta}A_{\bar{\alpha}\bar{\beta}}-\phi^i_{\bar{\alpha}}\phi^i_{\bar{\beta}}A_{\alpha\beta})\Psi.
\ee
\end{lemma}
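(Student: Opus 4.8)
The plan is to derive the stated identity in two steps: a commutation step that moves the $T$-derivative past the horizontal derivatives, followed by a single integration by parts. First I would rewrite the integrand using the third relation in (\ref{cr8}), $\phi^i_{0\alpha}-\phi^i_{\alpha0}=\phi^i_{\bar\beta}A_{\beta\alpha}$, together with its complex conjugate $\phi^i_{0\bar\alpha}-\phi^i_{\bar\alpha0}=\phi^i_{\beta}A_{\bar\beta\bar\alpha}$. Substituting $\phi^i_{\bar\alpha0}=\phi^i_{0\bar\alpha}-\phi^i_{\beta}A_{\bar\beta\bar\alpha}$ and $\phi^i_{\alpha0}=\phi^i_{0\alpha}-\phi^i_{\bar\beta}A_{\beta\alpha}$, and using the symmetry $A_{\alpha\beta}=A_{\beta\alpha}$ (self-adjointness of $\tau$) after relabeling the summed indices, one gets
\[
\sqrt{-1}\bigl(\phi^i_{\alpha}\phi^i_{\bar\alpha0}-\phi^i_{\bar\alpha}\phi^i_{\alpha0}\bigr)
=\sqrt{-1}\bigl(\phi^i_{\alpha}\phi^i_{0\bar\alpha}-\phi^i_{\bar\alpha}\phi^i_{0\alpha}\bigr)
-\sqrt{-1}\bigl(\phi^i_{\alpha}\phi^i_{\beta}A_{\bar\alpha\bar\beta}-\phi^i_{\bar\alpha}\phi^i_{\bar\beta}A_{\alpha\beta}\bigr).
\]
Hence, after integrating over $M$, the lemma reduces to proving $\sqrt{-1}\int_M(\phi^i_{\alpha}\phi^i_{0\bar\alpha}-\phi^i_{\bar\alpha}\phi^i_{0\alpha})\Psi=m\int_M(\phi^i_0)^2\Psi$.

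For this remaining identity I would apply the divergence theorem to the real horizontal $1$-form
\[
\eta=\sqrt{-1}\bigl(\phi^i_0\phi^i_{\alpha}\theta^{\alpha}-\phi^i_0\phi^i_{\bar\alpha}\theta^{\bar\alpha}\bigr),
\]
which lies in $H^*(M)$ and is invariant under complex conjugation since $\phi^i_0$ is real and $\overline{\phi^i_{\bar\alpha}}=\phi^i_{\alpha}$. Computing its divergence by the Leibniz rule via Lemma \ref{prl3} gives
\[
div\,\eta=\sqrt{-1}\bigl(\phi^i_{\alpha}\phi^i_{0\bar\alpha}-\phi^i_{\bar\alpha}\phi^i_{0\alpha}\bigr)+\sqrt{-1}\,\phi^i_0\bigl(\phi^i_{\alpha\bar\alpha}-\phi^i_{\bar\alpha\alpha}\bigr),
\]
and the traced form of the second relation in (\ref{cr8}), $\phi^i_{\alpha\bar\alpha}-\phi^i_{\bar\alpha\alpha}=\sqrt{-1}m\phi^i_0$, turns the last term into $-m(\phi^i_0)^2$. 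Since $M$ is compact without boundary, $\int_M div\,\eta\,\Psi=0$, which is precisely $\sqrt{-1}\int_M(\phi^i_{\alpha}\phi^i_{0\bar\alpha}-\phi^i_{\bar\alpha}\phi^i_{0\alpha})\Psi=m\int_M(\phi^i_0)^2\Psi$. Combining this with the first step yields (\ref{bo81}).

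I expect the only delicate point to be the choice of $\eta$: one must take the \emph{anti-symmetric} combination of $\phi^i_0\phi^i_{\alpha}\theta^{\alpha}$ and its conjugate, so that the second-order terms in $div\,\eta$ enter exactly as $\phi^i_{\alpha\bar\alpha}-\phi^i_{\bar\alpha\alpha}$, which (\ref{cr8}) collapses to the zeroth-order expression $\sqrt{-1}m\phi^i_0$; the symmetric combination would instead leave the uncontrolled term $\phi^i_0(\phi^i_{\alpha\bar\alpha}+\phi^i_{\bar\alpha\alpha})$, which cannot be simplified without assuming $\phi$ harmonic or pseudoharmonic—and note that (\ref{bo81}) is meant to hold for arbitrary smooth $\phi$. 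Apart from this, the computation only requires bookkeeping care with the symmetrization of the torsion coefficients $A_{\alpha\beta}$, $A_{\bar\alpha\bar\beta}$ when renaming summation indices.
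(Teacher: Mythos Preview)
Your proof is correct and is essentially the argument the paper has in mind: the paper simply declares that Lemmas~4.2--4.4 follow ``immediately'' from the commutation relations (\ref{cr8}), (\ref{cr17}), (\ref{cr19}), and for this particular lemma only (\ref{cr8}) together with one integration by parts is needed --- exactly your two steps. The very same divergence computation with the form $\eta$ appears verbatim later in the paper (proof of Lemma~\ref{ssl2}), confirming that this is the intended route.
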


\begin{lemma}
\br
\nonumber
2\int_M \phi^i_{\alpha}\phi^i_{\bar{\beta}}Ric_{\bar{\alpha}\beta}\Psi&=&-2\int_M (|\phi^i_{\alpha\beta}|^2-|\phi^i_{\alpha\bar{\beta}}|^2)\Psi+\sqrt{-1}m\int_M (\phi^i_{\alpha}\phi^i_{\bar{\alpha}0}-\phi^i_{\bar{\alpha}}\phi^i_{\alpha 0})\Psi\\
\label{bo8}&&+2\int_M \phi^i_{\alpha}\phi^j_{\bar{\alpha}}\phi^k_{\bar{\beta}}\phi^l_{\beta}\widehat{R_{ijkl}}\Psi.
\er
\end{lemma}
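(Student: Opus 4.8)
The plan is to derive (\ref{bo8}) by integrating the divergences of two conjugation-symmetric $1$-forms over the compact manifold $M$ and then feeding in the commutation relations of Section~3. Introduce the globally defined real $1$-forms
\be
\theta_{W_2}=\phi^i_{\bar\alpha}\phi^i_{\alpha\beta}\theta^\beta+\phi^i_{\alpha}\phi^i_{\bar\alpha\bar\beta}\theta^{\bar\beta},
\qquad
\theta_{W_3}=\phi^i_{\bar\alpha}\phi^i_{\alpha\bar\beta}\theta^{\bar\beta}+\phi^i_{\alpha}\phi^i_{\bar\alpha\beta}\theta^{\beta}.
\ee
By Lemma~\ref{prl3} one gets $div\,\theta_{W_2}=2|\phi^i_{\alpha\beta}|^2+\phi^i_{\bar\alpha}\phi^i_{\alpha\beta\bar\beta}+\phi^i_{\alpha}\phi^i_{\bar\alpha\bar\beta\beta}$ and $div\,\theta_{W_3}=2|\phi^i_{\alpha\bar\beta}|^2+\phi^i_{\bar\alpha}\phi^i_{\alpha\bar\beta\beta}+\phi^i_{\alpha}\phi^i_{\bar\alpha\beta\bar\beta}$. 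Since $M$ is compact, integrating against $\Psi$ and applying the divergence theorem kills the left-hand sides, and so expresses all four ``third-order'' integrals through $\int_M|\phi^i_{\alpha\beta}|^2\Psi$ and $\int_M|\phi^i_{\alpha\bar\beta}|^2\Psi$.

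Next, substitute into $\phi^i_{\alpha\beta\bar\beta}$, and into its conjugate $\phi^i_{\bar\alpha\bar\beta\beta}$, the relation obtained from (\ref{cr17}) by setting $\gamma=\beta$ and summing:
\be
\phi^i_{\alpha\beta\bar\beta}=\phi^i_{\alpha\bar\beta\beta}-\phi^j_{\alpha}\phi^k_{\beta}\phi^l_{\bar\beta}\widehat{R^i_{jkl}}+\phi^i_{\lambda}R^{\lambda}_{\alpha\beta\bar\beta}+\sqrt{-1}m\,\phi^i_{\alpha0}.
\ee
The $\phi^i_{\alpha\bar\beta\beta}$-contributions this creates are exactly the third-order integrands occurring in $div\,\theta_{W_3}$, so subtracting the $\theta_{W_3}$-identity from the rewritten $\theta_{W_2}$-identity cancels all leftover third-order terms and leaves $-2\int_M(|\phi^i_{\alpha\beta}|^2-|\phi^i_{\alpha\bar\beta}|^2)\Psi$ equal to a sum of three integrals: a domain-curvature one built from $\phi^i_{\bar\alpha}\phi^i_{\lambda}R^{\lambda}_{\alpha\beta\bar\beta}$ and its conjugate, a target-curvature one built from $\phi^i_{\bar\alpha}\phi^j_{\alpha}\phi^k_{\beta}\phi^l_{\bar\beta}\widehat{R^i_{jkl}}$ and its conjugate, and the first-order one $\sqrt{-1}m\int_M(\phi^i_{\bar\alpha}\phi^i_{\alpha0}-\phi^i_{\alpha}\phi^i_{\bar\alpha0})\Psi$. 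It then remains to identify these three terms with those appearing in (\ref{bo8}).

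The divergence computations and the repeated use of the divergence theorem are routine; the delicate part is the index algebra in the two curvature terms. First, one must show that the contracted pseudo-Hermitian curvature satisfies $\sum_\beta R^{\lambda}_{\alpha\beta\bar\beta}=Ric_{\alpha\bar\lambda}$, using the symmetry $R_{\alpha\bar\beta\lambda\bar\mu}=R_{\lambda\bar\beta\alpha\bar\mu}$ coming from (\ref{pr1}), the reality of $R$, and the definition of the pseudo-Hermitian Ricci tensor; after relabelling, this turns $\phi^i_{\bar\alpha}\phi^i_{\lambda}R^{\lambda}_{\alpha\beta\bar\beta}$ (and its conjugate) into $\phi^i_{\alpha}\phi^i_{\bar\beta}Ric_{\bar\alpha\beta}$ and so produces the factor $2$ on the left of (\ref{bo8}). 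Second, using $\widehat{R^i_{jkl}}=\widehat{R_{ijkl}}$ and the symmetries $R_{ijkl}=-R_{jikl}=-R_{ijlk}=R_{klij}$, one checks that $\phi^i_{\bar\alpha}\phi^j_{\alpha}\phi^k_{\beta}\phi^l_{\bar\beta}\widehat{R_{ijkl}}$ symmetrizes to the specific contraction $\phi^i_{\alpha}\phi^j_{\bar\alpha}\phi^k_{\bar\beta}\phi^l_{\beta}\widehat{R_{ijkl}}$ in the statement, with the correct sign and coefficient $2$. Finally one verifies that the two ``$\sqrt{-1}m\phi^i_{\alpha0}$'' contributions, one from (\ref{cr17}) and one from its conjugate, assemble into the antisymmetric combination $\phi^i_{\alpha}\phi^i_{\bar\alpha0}-\phi^i_{\bar\alpha}\phi^i_{\alpha0}$; keeping track of these signs is the step where an error is most likely to creep in. Rearranging then yields (\ref{bo8}).
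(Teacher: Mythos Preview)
Your proposal is correct and is essentially the detailed execution of what the paper indicates; the paper simply states that Lemma~4.4 follows ``immediately'' from (\ref{cr8}), (\ref{cr17}) and (\ref{cr19}), and your argument (split $\theta_{W_1}$ into its two conjugation-symmetric halves, integrate their divergences, and feed in the traced commutation relation (\ref{cr17})) is precisely how one fills in those details. One cosmetic remark: the names $\theta_{W_2}$ and $\theta_{W_3}$ are already used in Section~5 of the paper for different $1$-forms, so you should relabel your auxiliary forms to avoid a clash.
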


Integrating (\ref{bo3}) on $M$ and substituting (\ref{bo8}) into it, we have
\br
\nonumber
0&=&4\int_M |\phi^i_{\alpha\bar{\beta}}|^2\Psi-\int_M |\tau(\phi)|^2\Psi+\sqrt{-1}(m-2)\int_M (\phi^i_{\alpha}\phi^i_{\bar{\alpha}0}-\phi^i_{\bar{\alpha}}\phi^i_{\alpha 0})\Psi\\
\nonumber &&-\sqrt{-1}m\int_M(\phi^i_{\alpha}\phi^i_{\beta}A_{\bar{\alpha}\bar{\beta}}-\phi^i_{\bar{\alpha}}\phi^i_{\bar{\beta}}A_{\alpha\beta})\Psi\\
\nonumber &&-2\int_M (\phi^i_{\bar{\alpha}}\phi^j_{\beta}\phi^k_{\alpha}\phi^l_{\bar{\beta}}\widehat{R_{ijkl}}
+\phi^i_{\alpha}\phi^j_{\beta}\phi^k_{\bar{\alpha}}\phi^l_{\bar{\beta}}\widehat{R_{ijkl}}
-\phi^i_{\alpha}\phi^j_{\bar{\alpha}}\phi^k_{\bar{\beta}}\phi^l_{\beta}\widehat{R_{ijkl}})\Psi.
\er
By the Bianchi identity, we find
\br
\nonumber
-\phi^i_{\alpha}\phi^j_{\bar{\alpha}}\phi^k_{\bar{\beta}}\phi^l_{\beta}\widehat{R_{ijkl}}
&=&\phi^i_{\alpha}\phi^j_{\bar{\alpha}}\phi^k_{\bar{\beta}}\phi^l_{\beta}(\widehat{R_{iklj}}+\widehat{R_{iljk}})\\
\nonumber &=&\phi^i_{\alpha}\phi^j_{\bar{\beta}}\phi^k_{\beta}\phi^l_{\bar{\alpha}}\widehat{R_{ijkl}}
+\phi^i_{\alpha}\phi^j_{\beta}\phi^k_{\bar{\alpha}}\phi^l_{\bar{\beta}}\widehat{R_{ijkl}}\\
\nonumber &=&-\phi^i_{\bar{\alpha}}\phi^j_{\beta}\phi^k_{\alpha}\phi^l_{\bar{\beta}}\widehat{R_{ijkl}}
+\phi^i_{\alpha}\phi^j_{\beta}\phi^k_{\bar{\alpha}}\phi^l_{\bar{\beta}}\widehat{R_{ijkl}}.
\er
Hence
\br
\nonumber
0&=&4\int_M |\phi^i_{\alpha\bar{\beta}}|^2\Psi-\int_M |\tau(\phi)|^2\Psi+\sqrt{-1}(m-2)\int_M (\phi^i_{\alpha}\phi^i_{\bar{\alpha}0}-\phi^i_{\bar{\alpha}}\phi^i_{\alpha 0})\Psi\\
&&-\sqrt{-1}m\int_M (\phi^i_{\alpha}\phi^i_{\beta}A_{\bar{\alpha}\bar{\beta}}-\phi^i_{\bar{\alpha}}\phi^i_{\bar{\beta}}A_{\alpha\beta})\Psi
-4\int_M \phi^i_{\alpha}\phi^j_{\beta}\phi^k_{\bar{\alpha}}\phi^l_{\bar{\beta}}\widehat{R_{ijkl}}\Psi.
\er
Calculating (\ref{bo7})$\times (m-1)-$(\ref{bo81}) and substituting the result into the above formula, we have
\br
\nonumber
0&=&4\int_M |\phi^i_{\alpha\bar{\beta}}|^2\Psi-\f{1}{m}\int_M |\tau(\phi)|^2\Psi-m\int_M(\phi^i_0)^2\Psi+\f{2(m-1)}{m}\int_M \l\l P\phi+\overline{P\phi}, d_b\phi\r\r\Psi\\
&&-4\int_M \phi^i_{\alpha}\phi^j_{\beta}\phi^k_{\bar{\alpha}}\phi^l_{\bar{\beta}}\widehat{R_{ijkl}}\Psi.
\er
Since
$$
|\phi^i_{\alpha\bar{\beta}}|^2\geq\f{1}{m}|\sum\phi^i_{\alpha\bar{\alpha}}|^2=\f{1}{4m}|\tau(\phi)|^2+\f{m}{4}(\phi^i_0)^2,
$$
we conclude
\be\label{bo12}
-\int_M \l\l P\phi+\overline{P\phi}, d_b\phi\r\r\Psi\geq-\frac{2m}{m-1}\int_M \phi^i_{\alpha}\phi^j_{\beta}\phi^k_{\bar{\alpha}}\phi^l_{\bar{\beta}}\widehat{R_{ijkl}}\Psi.
\ee

\begin{definition}(cf. \cite{Sam2})
A Riemannian manifold $(N^n,h)$ is said to have nonpositive Hermitian curvature if
\be
R_{ijkl}u^i v^j \bar{u^{k}}\bar{v^l}\Psi\leq 0,
\ee
for any complex vectors $u$ and $v$.
\end{definition}

From (\ref{bo12}), we have
\begin{theorem}\label{bot1}
Let $(M^{2m+1},J,\theta)$ be a compact strictly pseudoconvex CR manifold with $m\geq 2$ and $(N,h)$ be a Riemannian manifold with nonpositive Hermitian curvature. Suppose $\phi:M\to N$ is a smooth map, then
$$-\int_M\l P\phi+\overline{P\phi},d_b \phi\r \Psi \geq 0.$$
\end{theorem}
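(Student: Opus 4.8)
The plan is to deduce the statement directly from the integral inequality (\ref{bo12}) established above, namely
$$-\int_M \langle P\phi+\overline{P\phi}, d_b\phi\rangle\,\Psi \;\geq\; -\frac{2m}{m-1}\int_M \phi^i_{\alpha}\phi^j_{\beta}\phi^k_{\bar\alpha}\phi^l_{\bar\beta}\,\widehat{R_{ijkl}}\,\Psi .$$
Since $m\geq 2$, the factor $\frac{2m}{m-1}$ is finite and strictly positive, so it suffices to show that the quartic curvature integral on the right-hand side is nonpositive.

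To see this, I would argue pointwise on $M$. Recalling that (\ref{cr6}) is the complex conjugate of (\ref{cr5}), one has $\phi^k_{\bar\alpha}=\overline{\phi^k_{\alpha}}$ for all $k$ and $\alpha$. Hence, fixing a point $x\in M$ and a pair of indices $\alpha,\beta$, and setting $u=(\phi^i_{\alpha}(x))_{i}$ and $v=(\phi^j_{\beta}(x))_{j}$ as complex vectors in $\mathbb{C}^n$, the corresponding summand becomes
$$\widehat{R_{ijkl}}\,\phi^i_{\alpha}\phi^j_{\beta}\phi^k_{\bar\alpha}\phi^l_{\bar\beta} \;=\; \widetilde{R_{ijkl}}(\phi(x))\,u^i v^j \overline{u^k}\,\overline{v^l},$$
which is precisely the Hermitian-curvature expression of $N$ evaluated at $\phi(x)$ on the vectors $u,v$. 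By the assumption that $(N,h)$ has nonpositive Hermitian curvature, each such summand is $\leq 0$; summing over all $\alpha$ and $\beta$ gives $\phi^i_{\alpha}\phi^j_{\beta}\phi^k_{\bar\alpha}\phi^l_{\bar\beta}\widehat{R_{ijkl}}\leq 0$ at every point of $M$, and therefore $\int_M \phi^i_{\alpha}\phi^j_{\beta}\phi^k_{\bar\alpha}\phi^l_{\bar\beta}\widehat{R_{ijkl}}\,\Psi\leq 0$.

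Substituting this back into (\ref{bo12}) yields $-\int_M\langle P\phi+\overline{P\phi},d_b\phi\rangle\,\Psi\geq 0$, which is the assertion. In this last step there is no genuine difficulty: all of the analytic content --- the CR Bochner identity, the integrations by parts, the Bianchi-identity rearrangement, and the Schwarz-type estimate $|\phi^i_{\alpha\bar\beta}|^2\geq\frac1m|\sum_\alpha\phi^i_{\alpha\bar\alpha}|^2$ --- is already packaged into (\ref{bo12}). The only points that require a moment's care are the observation that the quartic sum splits termwise into blocks of the form $\widetilde{R}(u,v,\bar u,\bar v)$, so that the Hermitian-curvature hypothesis applies block by block, and the remark that the hypothesis $m\geq 2$ is exactly what keeps the coefficient $\frac{2m}{m-1}$ positive and finite (equivalently, that the CR Paneitz-type nonnegativity genuinely requires $\dim M\geq 5$).
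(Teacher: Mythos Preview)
Your proposal is correct and follows exactly the paper's approach: the paper simply states that Theorem~\ref{bot1} follows from (\ref{bo12}), and you have filled in precisely the missing verification that the quartic curvature term on the right of (\ref{bo12}) is pointwise nonpositive under the Hermitian-curvature hypothesis (using $\phi^k_{\bar\alpha}=\overline{\phi^k_\alpha}$), together with the observation that $\tfrac{2m}{m-1}>0$ for $m\geq 2$.
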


Let's denote
\br
\nonumber Ric(X,Y)&=&R_{\alpha\bar{\beta}}X^{\alpha}Y^{\bar{\beta}},\\
\nonumber Tor(X,Y)&=&\sqrt{-1}(A_{\bar{\alpha}\bar{\beta}}X^{\bar{\alpha}}Y^{\bar{\beta}}-A_{\alpha\beta}X^{\alpha}Y^{\beta}),
\er
where $X=X^{\alpha}T_{\alpha}$, $Y=Y^{\beta}T_{\beta}$ and $R_{\alpha\bar{\beta}}=R^{\gamma}_{\gamma\alpha\bar{\beta}}$ is the pseudo-Hermitian Ricci curvature of $M$. We denote $(\nabla_b\phi^i)_{\mathbb{C}}=\phi^i_{\alpha}T_{\alpha}$.

\begin{theorem}\label{bot2}
Let $(M^{2m+1},J,\theta)$ be a compact strictly pseudoconvex CR manifold with $m\geq 2$ and $(N,h)$ be a Riemannian manifold with nonpositive Hermitian curvature. Let $\phi:M\rightarrow N$ be a pseudoharmonic map.
Suppose that
\be
\big(2Ric-(m+2)Tor\big)(Z,Z)\geq 0,
\ee
for any $Z\in \Gamma^{\infty}(T_{1,0}M)$, then\\
(i) $\phi$ is horizontal totally geodesic, that is $\phi^i_{\alpha\beta}=\phi^i_{\alpha\bar{\beta}}=0$. In particular, $\phi$ is baisc;\\
(ii)If $\big(2Ric-(m+2)Tor\big)(Z,Z)>0$ at one point in $M$, then $\phi$ is constant.
\end{theorem}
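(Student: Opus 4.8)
The plan is to integrate over $M$ the CR Bochner formula (\ref{bo31}) — the version whose pseudo-Hermitian torsion term carries the coefficient $m+2$, precisely matching the hypothesis — and then to argue that, once $\phi$ is pseudoharmonic, the right–hand side is a sum of terms each of which has a definite sign, forcing all of them to vanish.

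First, since $\theta_{W_0}$ is a globally defined $1$-form on the compact manifold $M$, we have $\int_M div\,\theta_{W_0}\,\Psi=0$. As $\phi$ is pseudoharmonic, $\tau(\phi)\equiv 0$, hence $\nabla_b\tau(\phi)\equiv 0$, so the term $(1+\frac{2}{m})\l\l d_b\phi,\nabla_b\tau(\phi)\r\r$ in (\ref{bo31}) drops out. By the definitions of $Ric$, $Tor$ and $(\nabla_b\phi^i)_{\mathbb C}=\phi^i_\alpha T_\alpha$, the Ricci and pseudo-Hermitian torsion terms of (\ref{bo31}) assemble into $\sum_i\big(2Ric-(m+2)Tor\big)\big((\nabla_b\phi^i)_{\mathbb C},(\nabla_b\phi^i)_{\mathbb C}\big)$, so integrating (\ref{bo31}) gives
\[
0=\int_M\Big[\,2\big(|\phi^i_{\alpha\beta}|^2+|\phi^i_{\alpha\bar\beta}|^2\big)+\sum_i\big(2Ric-(m+2)Tor\big)\big((\nabla_b\phi^i)_{\mathbb C},(\nabla_b\phi^i)_{\mathbb C}\big)+\frac{4}{m}\big(-\l\l P\phi+\overline{P\phi},d_b\phi\r\r\big)-2\Phi_R\,\Big]\Psi ,
\]
where $\Phi_R:=\big(\phi^i_{\bar\alpha}\phi^j_\beta\phi^k_\alpha\phi^l_{\bar\beta}+\phi^i_\alpha\phi^j_\beta\phi^k_{\bar\alpha}\phi^l_{\bar\beta}\big)\widehat{R_{ijkl}}$.

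Next, I would check that each of the four blocks of the integrand is nonnegative (the third one only in the integrated sense). The first, $2(|\phi^i_{\alpha\beta}|^2+|\phi^i_{\alpha\bar\beta}|^2)$, is manifestly $\ge 0$. The second is $\ge 0$ by applying the hypothesis $\big(2Ric-(m+2)Tor\big)(Z,Z)\ge 0$ with $Z=(\nabla_b\phi^i)_{\mathbb C}$ and summing over $i$. For the third, Theorem \ref{bot1} — applicable because $m\ge 2$ and $N$ has nonpositive Hermitian curvature — gives $-\int_M\l\l P\phi+\overline{P\phi},d_b\phi\r\r\Psi\ge 0$, and $\frac{4}{m}>0$. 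For the fourth, fix $\alpha,\beta$ and put $u=(\phi^i_\alpha)_i$, $v=(\phi^j_\beta)_j$, so that $\phi^k_{\bar\alpha}=\bar u^k$ and $\phi^l_{\bar\beta}=\bar v^l$; then $\phi^i_\alpha\phi^j_\beta\phi^k_{\bar\alpha}\phi^l_{\bar\beta}\widehat{R_{ijkl}}=\widehat{R_{ijkl}}\,u^iv^j\bar u^k\bar v^l\le 0$ by nonpositivity of the Hermitian curvature, while the pair symmetry $R_{ijkl}=R_{klij}$ lets one rewrite $\phi^i_{\bar\alpha}\phi^j_\beta\phi^k_\alpha\phi^l_{\bar\beta}\widehat{R_{ijkl}}=\widehat{R_{ijkl}}\,u^i\bar v^j\bar u^k v^l=\widehat{R_{ijkl}}\,u^iw^j\bar u^k\bar w^l\le 0$ with $w:=\bar v$; summing over $\alpha,\beta$ yields $\Phi_R\le 0$, i.e. $-2\Phi_R\ge 0$. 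Hence a sum of four nonnegative quantities integrates to zero, so each of them vanishes identically. From the first block, $\phi^i_{\alpha\beta}=\phi^i_{\alpha\bar\beta}=0$ on all of $M$, which is the horizontal total geodesy in (i); conjugating gives $\phi^i_{\bar\alpha\beta}=\phi^i_{\bar\alpha\bar\beta}=0$ as well, and summing the second relation of (\ref{cr8}) over $\alpha=\beta$ gives $\sqrt{-1}\,m\,\phi^i_0=\sum_\alpha(\phi^i_{\alpha\bar\alpha}-\phi^i_{\bar\alpha\alpha})=0$, so $d\phi(T)=0$ and $\phi$ is basic. For (ii), if $2Ric-(m+2)Tor$ is positive definite at some $p\in M$, then vanishing of the second block forces $(\nabla_b\phi^i)_{\mathbb C}(p)=0$, i.e. $\phi^i_\alpha(p)=0$ for all $i$. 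Since all second horizontal derivatives $\phi^i_{\alpha\beta},\phi^i_{\alpha\bar\beta},\phi^i_{\bar\alpha\beta},\phi^i_{\bar\alpha\bar\beta}$ vanish, the $1$-form $\theta_{W_1}$ representing the horizontal gradient of $e_H(\phi)$ is identically zero, so $d(e_H(\phi))$ annihilates $H(M)$; writing $d(e_H(\phi))=h\theta$ and differentiating, $0=dh\wedge\theta+h\,d\theta$, and restricting to $H(M)\otimes H(M)$ gives $h=0$ by nondegeneracy of $d\theta|_{H(M)}$, hence $d(e_H(\phi))=0$. Thus $e_H(\phi)$ is constant on the connected manifold $M$, equal to $e_H(\phi)(p)=0$, so $d_b\phi\equiv 0$; together with $d\phi(T)\equiv 0$ this gives $d\phi\equiv 0$, i.e. $\phi$ is constant.

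The main obstacle, I expect, is not an individual estimate but the structural choice: recognizing that (\ref{bo31}) is the identity adapted to the $(m+2)$-hypothesis, and that pseudoharmonicity is exactly what removes the $\nabla_b\tau(\phi)$ term so that the remaining integrand splits into four pieces, each governed by one of the standing hypotheses ($m\ge 2$, nonpositive Hermitian curvature, positivity of $2Ric-(m+2)Tor$). The one genuinely computational point is the sign of the quartic block $\Phi_R$: its \emph{mixed} summand $\phi^i_{\bar\alpha}\phi^j_\beta\phi^k_\alpha\phi^l_{\bar\beta}\widehat{R_{ijkl}}$ is not manifestly Hermitian and must be brought into the right form via $R_{ijkl}=R_{klij}$. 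Finally, the closing step of (ii) — promoting the pointwise vanishing of $d\phi$ at a single point to global constancy of $\phi$ — uses the contact nature of $M$ through the nondegeneracy of $d\theta$ on $H(M)$ and would fail without the horizontal total geodesy secured in (i).
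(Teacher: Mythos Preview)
Your argument is correct, and for part (i) it follows essentially the same path as the paper: integrate the CR Bochner identity (\ref{bo31}), use pseudoharmonicity to kill the $\nabla_b\tau(\phi)$ term, and observe that each of the four remaining contributions is nonnegative (the $P$-operator term by Theorem \ref{bot1}, the quartic curvature block by nonpositive Hermitian curvature, and the Ricci--torsion block by the hypothesis), forcing $\phi^i_{\alpha\beta}=\phi^i_{\alpha\bar\beta}=0$. The paper deduces basicity by reading $\phi^i_{\alpha\bar\beta}=0$ as CR pluriharmonicity and invoking Proposition \ref{bap3}(i); you instead trace $\alpha=\beta$ in (\ref{cr8}) directly --- these are equivalent.

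For part (ii), however, your route is genuinely different. The paper first notes (via Corollary \ref{cor21}) that a basic pseudoharmonic map is harmonic, uses positivity of $2Ric-(m+2)Tor$ on an open neighborhood $U$ of the distinguished point to get $d\phi\equiv 0$ on $U$, and then invokes Sampson's unique continuation theorem for harmonic maps to globalize. Your argument is more elementary and self-contained: from the global vanishing of the horizontal second derivatives secured in (i) you get $\theta_{W_1}\equiv 0$, so $d(e_H(\phi))$ annihilates $H(M)$; the contact nondegeneracy of $d\theta|_{H(M)}$ then forces $d(e_H(\phi))=0$, and since $e_H(\phi)$ vanishes at one point it vanishes identically. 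This avoids the external appeal to unique continuation (and to harmonicity of $\phi$), at the cost of one extra line about the contact structure. Both approaches are valid; yours stays entirely within the pseudo-Hermitian framework.
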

\begin{proof}
(i) By (\ref{bo31}), we have
\br
\nonumber
0&=&2\int_M (|\phi^i_{\alpha\beta}|^2+|\phi^i_{\alpha\bar{\beta}}|^2)\Psi-(1+\f{2}{m})\int_M |\tau(\phi)|^2\Psi-\f{4}{m}\int_M \l P\phi+\overline{P\phi},d_b\phi\r\Psi\\
\nonumber &&+\int_M (2Ric-(m+2)Tor)((\nabla_b\phi^i)_{\mathbb{C}},\nabla_b\phi^i)_{\mathbb{C}})\Psi\\
\nonumber&&-2\int_M(\phi^i_{\bar{\alpha}}\phi^j_{\beta}\phi^k_{\alpha}\phi^l_{\bar{\beta}}\widehat{R_{ijkl}}+\phi^i_{\alpha}\phi^j_{\beta}\phi^k_{\bar{\alpha}}\phi^l_{\bar{\beta}}\widehat{R_{ijkl}})\Psi.
\er
By Theorem 4.1, the CR Paneitz operator from $M$ into $N$ is nonnegative. Because of the curvature condition of $N$, the last term of the above formula is nonnegative. Since (4.11) and $\phi$ is pseudoharmonic, we get
$$
0\geq \int_M (|\phi^i_{\alpha\beta}|^2+|\phi^i_{\alpha\bar{\beta}}|^2)\Psi.
$$
Hence $\phi^i_{\alpha\beta}=\phi^i_{\alpha\bar{\beta}}=0$. From $\phi^i_{\alpha\bar{\beta}}=0$, we see that $\phi$ is CR pluriharmonic, so $\phi$ is baisc.

(ii) Since $\phi$ is baisc and pseudoharmonic, by Proposition \ref{bal4}, we have that $\phi$ is harmonic. By the curvature condition of $M$, we have $(\nabla_b\phi^i)_{\mathbb{C}}=0$ in some neigberhood $U$ of that point. Thus we get $\phi$ is constant in $U$. It follows from the unique continuation theorem (cf. \cite{Sam1}) that $\phi$ is constant on $M$.
\end{proof}

\begin{remark}
If the manifold $M$ in Theorem 4.2 is Sasakian and $Ric(Z,Z)\geq 0$, we have $\beta\equiv 0$.
\end{remark}

\section{Basicity of harmonic and pseudoharmonic maps}

Suppose $\phi:(M^{2m+1},J,\theta)\rightarrow (N,h)$ is a smoooth map from a strictly pseudoconvex CR manifold into a Riemannian manifold. We choose the orthonormal CR coframe field $\{\theta,\theta^{\alpha},\theta^{\bar{\alpha}}\}$ on $M$ and the orthonormal coframe field $\{\omega^{i}\}$ on $N$ respectively.
We still use the notaions in Section 3. Set
\begin{eqnarray*}
\theta_{W_2}&=&(\phi^i_0\phi^i_{0\alpha}\theta^{\alpha}+\phi^i_0\phi^i_{0\bar{\alpha}}\theta^{\bar{\alpha}})+\phi^i_0\phi^i_{00}\theta\\
\theta_{W_3}&=&(\phi^i_0\phi^i_{0\alpha}\theta^{\alpha}+\phi^i_0\phi^i_{0\bar{\alpha}}\theta^{\alpha})
\end{eqnarray*}
Clearly $\theta_{W_2}$, $\theta_{W_3}$ are well-defined global $1$-forms on $M$. In fact, $\theta_{W_2}$ is the 1-form corresponding to the vector field $\frac{1}{2}\nabla |d\phi(T)|^2$ and $\theta_{W_3}$ is the 1-form corresponding to the horizontal gradient $\frac{1}{2}\nabla^H|d\phi(T)|^2=\frac{1}{2}\Pi_H \nabla|d\phi(T)|^2$.

By the commutative relations in section 3, we have
\begin{lemma}\label{ssl1}
\begin{eqnarray}\label{ss1}
\label{ss2} \nonumber div\theta_{W_2}&=&2|\phi^i_{0\alpha}|^2+|\phi^i_{00}|^2+\phi^i_0(\phi^i_{\alpha\bar{\alpha}0}+\phi^i_{\bar{\alpha}\alpha0}+\phi^i_{000})
-2\phi^i_0\phi^j_{\alpha}\phi^k_0\phi^l_{\bar{\alpha}}\widehat{R_{ijkl}}\\
&&+2\phi^i_0\phi^i_{\beta}A_{\bar{\beta}\bar{\alpha},\alpha}+2\phi^i_0\phi^i_{\bar{\beta}}A_{\beta \alpha,\bar{\alpha}}+2\phi^i_0\phi^i_{\alpha\beta}A_{\bar{\beta}\bar{\alpha}}+2\phi^i_0\phi^i_{\bar{\alpha}\bar{\beta}}A_{\beta\alpha};\\
\label{ss3} \nonumber div\theta_{W_3}&=&2|\phi^i_{0\alpha}|^2+\phi^i_0(\phi^i_{\alpha\bar{\alpha}0}+\phi^i_{\bar{\alpha}\alpha0})
-2\phi^i_0\phi^j_{\alpha}\phi^k_0\phi^l_{\bar{\alpha}}\widehat{R_{ijkl}}\\
&&+2\phi^i_0\phi^i_{\beta}A_{\bar{\beta}\bar{\alpha},\alpha}+2\phi^i_0\phi^i_{\bar{\beta}}A_{\beta \alpha,\bar{\alpha}}+2\phi^i_0\phi^i_{\alpha\beta}A_{\bar{\beta}\bar{\alpha}}+2\phi^i_0\phi^i_{\bar{\alpha}\bar{\beta}}A_{\beta\alpha}.
\er
\end{lemma}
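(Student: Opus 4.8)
The plan is to evaluate each divergence directly from Lemma~\ref{prl3}, expand by Leibniz, and then rewrite the resulting third-order covariant derivatives so that the $0$-index sits in the last slot, using the commutation relations of Section~3. Start with $\theta_{W_2}$. Its $g_{\theta}$-dual is the vector field $\phi^i_0\phi^i_{0\bar\alpha}T_{\alpha}+\phi^i_0\phi^i_{0\alpha}T_{\bar\alpha}+\phi^i_0\phi^i_{00}T$, so passing to the complex frame exactly as in the computation of $\mathrm{div}\,\theta_{W_1}$ in Section~4 gives
$$\mathrm{div}\,\theta_{W_2}=(\phi^i_0\phi^i_{0\alpha})_{,\bar\alpha}+(\phi^i_0\phi^i_{0\bar\alpha})_{,\alpha}+(\phi^i_0\phi^i_{00})_{,0}.$$
Expanding by Leibniz and using that $\phi^i_0,\phi^i_{00}$ are real and $\phi^i_{0\bar\alpha}=\overline{\phi^i_{0\alpha}}$ (so $\phi^i_{0\alpha}\phi^i_{0\bar\alpha}=|\phi^i_{0\alpha}|^2$), this becomes
$$\mathrm{div}\,\theta_{W_2}=2|\phi^i_{0\alpha}|^2+|\phi^i_{00}|^2+\phi^i_0\big(\phi^i_{0\alpha\bar\alpha}+\phi^i_{0\bar\alpha\alpha}+\phi^i_{000}\big).$$

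Next I would reduce $\phi^i_{0\alpha\bar\alpha}$: apply (\ref{cr21}) to move the leading $0$ into the middle slot, then (\ref{cr11}) to move it into the last slot, tracing over the repeated index. This expresses $\phi^i_{0\alpha\bar\alpha}$ as $\phi^i_{\alpha\bar\alpha 0}$ plus a single curvature term $\phi^j_{\alpha}\phi^k_{\bar\alpha}\phi^l_0\widehat{R_{ijkl}}$ plus four pseudo-Hermitian torsion terms built from $A_{\bar\beta\bar\alpha,\alpha}$, $A_{\beta\alpha,\bar\alpha}$, $A_{\bar\beta\bar\alpha}$ and $A_{\beta\alpha}$. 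Contracting with $\phi^i_0$ and adding the complex conjugate of this identity (legitimate since the excerpt notes the conjugates of (\ref{cr10})--(\ref{cr11}) hold and $\phi^i_0,\widehat{R_{ijkl}}$ are real), and then simplifying with $A_{\alpha\beta}=A_{\beta\alpha}$, $\overline{A_{\alpha\beta}}=A_{\bar\alpha\bar\beta}$ and $\phi^i_{\alpha\beta}=\phi^i_{\beta\alpha}$ from (\ref{cr8}), the eight torsion contributions coalesce into exactly the four $A$-terms of the statement, each with coefficient $2$. The curvature contribution is left as $\phi^i_0\phi^l_0\big(\phi^j_{\alpha}\phi^k_{\bar\alpha}+\phi^j_{\bar\alpha}\phi^k_{\alpha}\big)\widehat{R_{ijkl}}$; applying the pair symmetry $\widehat{R_{ijkl}}=\widehat{R_{klij}}$ and then the antisymmetries $\widehat{R_{ijkl}}=-\widehat{R_{jikl}}=-\widehat{R_{ijlk}}$, each summand equals $-\phi^i_0\phi^j_{\alpha}\phi^k_0\phi^l_{\bar\alpha}\widehat{R_{ijkl}}$, so the curvature contribution collapses to $-2\phi^i_0\phi^j_{\alpha}\phi^k_0\phi^l_{\bar\alpha}\widehat{R_{ijkl}}$. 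Substituting these two computations back into the last display for $\mathrm{div}\,\theta_{W_2}$ yields the first identity.

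For $\theta_{W_3}$ (whose second term should be read with $\theta^{\bar\alpha}$, so that $\theta_{W_3}$ is a real $1$-form) note that $\theta_{W_3}=\theta_{W_2}-\phi^i_0\phi^i_{00}\theta$; hence $\mathrm{div}\,\theta_{W_3}=\mathrm{div}\,\theta_{W_2}-(\phi^i_0\phi^i_{00})_{,0}=\mathrm{div}\,\theta_{W_2}-|\phi^i_{00}|^2-\phi^i_0\phi^i_{000}$, which is precisely the second identity.

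The computation is in essence bookkeeping, and I expect the one step needing a genuine idea rather than routine index manipulation to be the collapse of the two curvature terms into the single term $-2\phi^i_0\phi^j_{\alpha}\phi^k_0\phi^l_{\bar\alpha}\widehat{R_{ijkl}}$ via the pairing symmetry of the Riemann tensor; a secondary point to keep straight is the order in which (\ref{cr21}) and (\ref{cr11}) are applied so as to land on derivatives ending in $0$.
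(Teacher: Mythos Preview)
Your proposal is correct and follows precisely the route the paper intends: the paper gives no detailed argument for this lemma beyond the sentence ``By the commutative relations in section 3, we have,'' and your computation---divergence via Lemma~\ref{prl3}, Leibniz expansion, then pushing the $0$-index to the last slot using (\ref{cr21}) followed by (\ref{cr11}) (and their conjugates), with the final curvature simplification via the symmetries of $\widehat{R_{ijkl}}$---is exactly the omitted bookkeeping. Your observation about the typo in $\theta_{W_3}$ is also correct.
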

\begin{remark}
In fact, $div\theta_{W_2}=\frac{1}{2}\Delta |d\phi(T)|^2$, and  $div\theta_{W_3}=\frac{1}{2}\Delta_b |d\phi(T)|^2$.
\end{remark}

\begin{definition}
Let $\phi:M\rightarrow N$ be a smooth map from a strictly pseudoconvex CR manifold $M$ into a Riemannian manifold $N$. The second fundamental form $\beta$ is called split if $\beta(T,X)=0$ for any $X\in H(M)$.
\end{definition}

\begin{remark}
According to (\ref{cr8}), the condition $\beta (T,X)=0$ for $X\in H(M)$ is not equivalent to $\beta (X,T)=0$ for $X \in H(M)$ in general.
From Proposition \ref{pro201} and (\ref{pr8}), it is easy to see that if $\phi$ is baisc, then the second fundamental form $\beta$ is split. The next result shows that if the domain CR manifold is compact, the converse is also true.
\end{remark}

\begin{lemma}\label{ssl2}
Let $\phi:M\rightarrow N$ be a smooth map from a compact strictly pseudoconvex CR manifold $M$ into a Riemannian manifold $N$. If the second fundamental form $\beta$ is split, then $\phi$ is basic.
\end{lemma}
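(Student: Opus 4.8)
The plan is to restate the splitting hypothesis as a pointwise condition in the moving frame of Section 3, produce a globally defined real $1$-form on $M$ whose divergence equals a constant multiple of $|d\phi(T)|^2$, and then integrate over the compact manifold.

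First I would unwind what ``split'' means in the frame notation. Since the Tanaka--Webster connection satisfies $\nabla T=0$ (Proposition \ref{pro201}), for $X\in H(M)$ the definition (\ref{pr8}) reduces to $\beta(T,X)=\nabla^h_X(d\phi(T))-d\phi(\nabla_XT)=\nabla^h_X(d\phi(T))$; writing $d\phi(T)=\phi^i_0E_i$ as in (\ref{cr3}) and using (\ref{cr7}) this becomes $\beta(T,T_\alpha)=\phi^i_{0\alpha}E_i$. Hence $\beta$ is split if and only if $\phi^i_{0\alpha}=0$ for all $i,\alpha$, and consequently $\phi^i_{0\bar\alpha}=\overline{\phi^i_{0\alpha}}=0$ as well. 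Next I would introduce the real $1$-form $\omega$ on $M$ given invariantly by $\omega(X)=h\big(d\phi(T),d\phi(JX)\big)$. Since $J$ acts as $\sqrt{-1}$ on $T_{1,0}M$, as $-\sqrt{-1}$ on $T_{0,1}M$, and annihilates $T$, in the coframe one has $\omega=\sqrt{-1}\,\phi^i_0\phi^i_\alpha\theta^\alpha-\sqrt{-1}\,\phi^i_0\phi^i_{\bar\alpha}\theta^{\bar\alpha}$, which one checks to be a well-defined real $1$-form.

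Then I would compute $div\,\omega$ with respect to the Tanaka--Webster connection via Lemma \ref{prl3}, differentiating the products by (\ref{cr5}) and (\ref{cr7}); this gives $div\,\omega=\sqrt{-1}\big(\phi^i_{0\bar\alpha}\phi^i_\alpha-\phi^i_{0\alpha}\phi^i_{\bar\alpha}\big)+\sqrt{-1}\,\phi^i_0\big(\phi^i_{\alpha\bar\alpha}-\phi^i_{\bar\alpha\alpha}\big)$. Under the splitting hypothesis the first bracket vanishes by the previous step, while the commutation relation $\phi^i_{\alpha\bar\beta}-\phi^i_{\bar\beta\alpha}=\sqrt{-1}\,\delta_{\alpha\beta}\phi^i_0$ from (\ref{cr8}) yields $\sum_\alpha(\phi^i_{\alpha\bar\alpha}-\phi^i_{\bar\alpha\alpha})=\sqrt{-1}\,m\phi^i_0$, so that $div\,\omega=-m\sum_i(\phi^i_0)^2=-m\,|d\phi(T)|^2$.

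Finally, since $M$ is compact and this divergence coincides with the Levi--Civita one (Lemmas \ref{prl2} and \ref{prl3}), the divergence theorem gives $0=\int_M div\,\omega\,\Psi=-m\int_M|d\phi(T)|^2\,\Psi$, whence $d\phi(T)\equiv0$; that is, $\phi$ is basic. The computation is otherwise entirely routine in the frame formalism; the only step needing a bit of foresight is the choice of $\omega$ --- twisting the natural $1$-form $h(d\phi(T),d\phi(\cdot))$ by $J$ is precisely what makes the first-order integrability identity (\ref{cr8}) convert the trace of its covariant derivative into a clean multiple of $|d\phi(T)|^2$, rather than into an expression entangled with $\tau(\phi)$ and curvature terms.
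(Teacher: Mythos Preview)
Your argument is correct and is essentially the paper's own proof: the paper performs the same integration by parts, writing $0=\sqrt{-1}\int_M(\phi^i_{\alpha}\phi^i_{0\bar{\alpha}}-\phi^i_{\bar{\alpha}}\phi^i_{0\alpha})\Psi=-\sqrt{-1}\int_M(\phi^i_{\alpha\bar{\alpha}}-\phi^i_{\bar{\alpha}\alpha})\phi^i_0\,\Psi=m\int_M|\phi^i_0|^2\Psi$, which is exactly the divergence computation for your $\omega$ combined with (\ref{cr8}). You have simply packaged the integration by parts more explicitly as the divergence of a named $1$-form and spelled out why ``split'' translates to $\phi^i_{0\alpha}=0$; the underlying argument is identical (indeed the same $1$-form, up to the factor $\sqrt{-1}$, reappears in the paper's Proposition \ref{cnl1}).
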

\begin{proof}
By the integration by parts and the commutative formulae (\ref{cr8}), we have
\br\nonumber
0=\sqrt{-1} \int_M (\phi^i_{\alpha}\phi^i_{0\bar{\alpha}}-\phi^i_{\bar{\alpha}}\phi^i_{0\alpha})\Psi
=-\sqrt{-1}\int_M(\phi^i_{\alpha\bar{\alpha}}\phi^i_{0}-\phi^i_{\bar{\alpha}\alpha}\phi^i_{0})\Psi
=m \int_M |\phi^i_0|^2 \Psi.
\er
Thus we have $\phi^i_0=0$, i.e., $d\phi(T)=0$.
\end{proof}

First, we prove the following result of Petit by the moving frame method.
\begin{proposition}\label{ssp1}
(cf. \cite{Pet}) Let $(M^{2m+1},J,\theta)$ be a compact Sasakian manifold and $(N,h)$ be a Riemannian manifold with nonpositive curvature.
Suppose $\phi:M\rightarrow N$ is a harmonic map. Then $\phi$ is basic.
\end{proposition}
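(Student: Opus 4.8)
The plan is to use the Bochner-type formula for $\theta_{W_3}$ from Lemma \ref{ssl1}, specialized to the Sasakian setting, and integrate over the compact manifold $M$. Since $M$ is Sasakian, the pseudo-Hermitian torsion vanishes, so $A_{\alpha\beta}\equiv 0$ and all its covariant derivatives vanish; consequently every term in the expression for $div\,\theta_{W_3}$ that carries a factor of $A$ drops out. What remains is
$$
div\,\theta_{W_3}=2|\phi^i_{0\alpha}|^2+\phi^i_0\bigl(\phi^i_{\alpha\bar\alpha 0}+\phi^i_{\bar\alpha\alpha 0}\bigr)-2\phi^i_0\phi^j_\alpha\phi^k_0\phi^l_{\bar\alpha}\widehat{R_{ijkl}}.
$$
First I would observe that the middle term involves the $T$-derivative of the pseudoharmonic tension $\tau(\phi)$; but $\phi$ is harmonic, not a priori pseudoharmonic, so instead I would rewrite $\phi^i_{\alpha\bar\alpha 0}+\phi^i_{\bar\alpha\alpha 0}$ using the harmonic equation $\phi^i_{\alpha\bar\alpha}+\phi^i_{\bar\alpha\alpha}+\phi^i_{00}=0$ — differentiating it in the $T$-direction gives $\phi^i_{\alpha\bar\alpha 0}+\phi^i_{\bar\alpha\alpha 0}=-\phi^i_{000}$. (Here I must be careful to use the correct commutation relations from Section 3 to move the differentiation through; in the Sasakian case these simplify considerably since $A\equiv 0$.) Then $\phi^i_0(\phi^i_{\alpha\bar\alpha 0}+\phi^i_{\bar\alpha\alpha 0})=-\phi^i_0\phi^i_{000}$, and this last quantity is (up to a divergence) controlled: $\phi^i_0\phi^i_{000}=\tfrac12(|\phi^i_0|^2)_{00}-|\phi^i_{00}|^2$, i.e. it is $div$ of the $T$-component $\phi^i_0\phi^i_{00}\theta$ minus $|\phi^i_{00}|^2$. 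This is exactly the bookkeeping that relates $\theta_{W_3}$ to $\theta_{W_2}$; in fact the cleaner route is to integrate $div\,\theta_{W_2}$ directly, since $\int_M div\,\theta_{W_2}=0$ and in the Sasakian harmonic case the term $\phi^i_0(\phi^i_{\alpha\bar\alpha 0}+\phi^i_{\bar\alpha\alpha 0}+\phi^i_{000})$ vanishes identically by the $T$-derivative of the harmonic equation.

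Concretely, I would integrate the formula for $div\,\theta_{W_2}$ over $M$. The left side integrates to zero by the divergence theorem. On the right side, with $A\equiv 0$ the terms $\phi^i_0\phi^i_\beta A_{\cdots}$ and $\phi^i_0\phi^i_{\alpha\beta}A_{\cdots}$ all vanish, and the term $\phi^i_0(\phi^i_{\alpha\bar\alpha 0}+\phi^i_{\bar\alpha\alpha 0}+\phi^i_{000})$ vanishes by differentiating the harmonic equation in the $T$-direction. We are left with
$$
0=\int_M\Bigl(2|\phi^i_{0\alpha}|^2+|\phi^i_{00}|^2-2\phi^i_0\phi^j_\alpha\phi^k_0\phi^l_{\bar\alpha}\widehat{R_{ijkl}}\Bigr)\Psi.
$$
Now the curvature hypothesis enters: $(N,h)$ has nonpositive sectional curvature, which in particular gives $R_{ijkl}u^iv^j\bar u^k\bar v^l\le 0$ for real vectors $u,v$ paired as here (the term $\phi^i_0\phi^j_\alpha\phi^k_0\phi^l_{\bar\alpha}\widehat{R_{ijkl}}$ is, after relabeling, of the form $\widehat{R}(U,V,\bar U,\bar V)$ with $U=d\phi(T)$ real and $V=(\nabla_b\phi)_{\mathbb C}$), so $-2\phi^i_0\phi^j_\alpha\phi^k_0\phi^l_{\bar\alpha}\widehat{R_{ijkl}}\ge 0$. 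Hence each of the three summands in the integrand is nonnegative, forcing $\phi^i_{0\alpha}=0$, $\phi^i_{00}=0$, and the curvature term to vanish. From $\phi^i_{0\alpha}=0$ (and its conjugate $\phi^i_{0\bar\alpha}=0$) together with $\phi^i_{00}=0$ we conclude that $\beta(T,X)=0$ for all $X\in TM$, i.e. $d\phi(T)$ is parallel along every direction; in particular $\nabla^h_T(d\phi(T))=0$, so by Proposition \ref{bal4} $\phi$ is also pseudoharmonic and $\beta$ is split. Finally, Lemma \ref{ssl2} (compactness of $M$) upgrades "$\beta$ split" to "$\phi$ basic", i.e. $d\phi(T)=0$.

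The main obstacle I anticipate is the sign/reality bookkeeping in the curvature term: one must verify that with $u=d\phi(T)$ genuinely real the quantity $\phi^i_0\phi^j_\alpha\phi^k_0\phi^l_{\bar\alpha}\widehat{R_{ijkl}}$ really is of the form $\widehat R(u,v,\bar u,\bar v)$ (using the symmetries $R_{ijkl}=-R_{jikl}=-R_{ijlk}=R_{klij}$ of the Riemannian curvature tensor and the first Bianchi identity) so that nonpositive sectional curvature — not the stronger nonpositive Hermitian curvature — suffices. A secondary point requiring care is the step "$\phi^i_0(\phi^i_{\alpha\bar\alpha 0}+\phi^i_{\bar\alpha\alpha 0}+\phi^i_{000})=0$": one should differentiate the harmonic equation $\phi^i_{\alpha\bar\alpha}+\phi^i_{\bar\alpha\alpha}+\phi^i_{00}=0$ covariantly in the $T$-direction using the commutation relations of Section 3, checking that in the Sasakian case ($A\equiv 0$) no extra curvature or torsion correction terms survive, so that indeed $\phi^i_{\alpha\bar\alpha 0}+\phi^i_{\bar\alpha\alpha 0}+\phi^i_{000}=0$ pointwise. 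Once these two points are settled the argument is a direct integration-by-parts plus pointwise positivity argument.
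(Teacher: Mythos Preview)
Your proposal is correct and follows essentially the same approach as the paper: use the formula for $div\,\theta_{W_2}$ from Lemma \ref{ssl1}, drop the $A$-terms by the Sasakian hypothesis, kill the $\phi^i_0(\phi^i_{\alpha\bar\alpha 0}+\phi^i_{\bar\alpha\alpha 0}+\phi^i_{000})$ term by covariantly differentiating the harmonic equation, verify the sign of the curvature term via the real splitting $T_\alpha=\tfrac{1}{\sqrt2}(e_\alpha-iJe_\alpha)$ so that nonpositive \emph{sectional} curvature suffices, and then apply the divergence theorem and Lemma \ref{ssl2}. Your detour through Proposition \ref{bal4} at the end is unnecessary (Lemma \ref{ssl2} needs only $\phi^i_{0\alpha}=0$, which you already have), but it does no harm.
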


\begin{proof}
Since $\phi$ is harmonic, we have $D\tau^{\theta}(\phi)=0$. Consequently, $\phi^i_{\alpha\bar{\alpha}0}+\phi^i_{\bar{\alpha}\alpha0}+\phi^i_{000}=0$.
The Sasakian condition for $M$ means that $A_{\alpha \beta}=0$, for any $\alpha,\beta$, then (\ref{ss2}) becomes
\be
\nonumber div\theta_{W_2}= 2|\phi^i_{0\alpha}|^2+|\phi^i_{00}|^2-2\phi^i_0\phi^j_{\alpha}\phi^k_0\phi^l_{\bar{\alpha}}\widehat{R_{ijkl}}.
\ee
Since the sectional curvature of $N$ is nonpositive, we take $T_{\alpha}=\frac{1}{\sqrt{2}}(e_{\alpha}-iJe_{\alpha})$ and $T_{\bar{\alpha}}=\frac{1}{\sqrt{2}}(e_{\alpha}+iJe_{\alpha})$ and compute the following curvature term to find
\br
\nonumber &&\phi^i_0\phi^j_{\alpha}\phi^k_0\phi^l_{\bar{\alpha}}\widehat{R_{ijkl}}\\
\nonumber&=&h(\widehat{R}(d\phi(T),d\phi(T_{\bar{\alpha}}))d\phi(T_{\alpha}),d\phi(T))\\
\nonumber&=&\f{1}{2}h(\widehat{R}(d\phi(T),d\phi(e_{\alpha}+iJe_{\alpha}))d\phi(e_{\alpha}-iJe_{\alpha}),d\phi(T))\\
\nonumber&=&\f{1}{2}[h(\widehat{R}(d\phi(T),d\phi(e_{\alpha}))d\phi(e_{\alpha}),d\phi(T))+h(\widehat{R}(d\phi(T),d\phi(Je_{\alpha}))d\phi(Je_{\alpha}),d\phi(T))]\\
\nonumber&\leq&0.
\er
Therefore
\be\label{ss6}
div\theta_{W_2}\geq 2|\phi^i_{0\alpha}|^2+|\phi^i_{00}|^2.
\ee
The divergence theorem yields
$$
\phi^i_{00}=\phi^i_{0\alpha}=\phi^i_{0\bar{\alpha}}=0.
$$
The fact that $\phi$ is basic can be easily obtained by Lemma \ref{ssl2}.
\end{proof}

The next result shows that Petit type result is also true for pseudoharmonic maps.

\begin{theorem}\label{ssp2}
Let $(M^{2m+1},J,\theta)$ be a compact Sasakian manifold and $(N,h)$ be a Riemannian manifold with nonpositive curvature.
Suppose $\phi:M\rightarrow N$ is a pseudoharmonic map. Then $\phi$ is basic and harmonic.
\end{theorem}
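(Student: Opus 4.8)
The plan is to mimic the proof of Proposition \ref{ssp1}, but now feed into the divergence formula \eqref{ss3} for $\theta_{W_3}$ rather than \eqref{ss2} for $\theta_{W_2}$, since a pseudoharmonic map only gives us control of the horizontal trace of $\beta$ and not of $\phi^i_{00}$. First I would record the consequences of the hypotheses: the Sasakian condition forces $A_{\alpha\beta}=0$ and all its covariant derivatives to vanish, so every torsion term in \eqref{ss3} drops out, leaving
\[
\mathrm{div}\,\theta_{W_3}=2|\phi^i_{0\alpha}|^2+\phi^i_0(\phi^i_{\alpha\bar\alpha 0}+\phi^i_{\bar\alpha\alpha 0})-2\phi^i_0\phi^j_\alpha\phi^k_0\phi^l_{\bar\alpha}\widehat{R_{ijkl}}.
\]
The pseudoharmonicity hypothesis $\phi^i_{\alpha\bar\alpha}+\phi^i_{\bar\alpha\alpha}=0$ should be differentiated in the $T$-direction to kill the middle term: applying $D$ (covariant differentiation along $T$) to the identity $\tau(\phi)=0$, or equivalently using the commutation relations of Section 3 together with the Sasakian vanishing of $A$, yields $\phi^i_{\alpha\bar\alpha 0}+\phi^i_{\bar\alpha\alpha 0}=0$.

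Next I would handle the curvature term exactly as in Proposition \ref{ssp1}: writing $T_\alpha=\tfrac{1}{\sqrt2}(e_\alpha-iJe_\alpha)$, the sum $\phi^i_0\phi^j_\alpha\phi^k_0\phi^l_{\bar\alpha}\widehat{R_{ijkl}}$ becomes $\tfrac12\big[h(\widehat R(d\phi(T),d\phi(e_\alpha))d\phi(e_\alpha),d\phi(T))+h(\widehat R(d\phi(T),d\phi(Je_\alpha))d\phi(Je_\alpha),d\phi(T))\big]$, which is $\leq 0$ by the nonpositivity of the sectional curvature of $N$. Hence $\mathrm{div}\,\theta_{W_3}\geq 2|\phi^i_{0\alpha}|^2\geq 0$, and since $M$ is compact, integrating against $\Psi$ and applying the divergence theorem forces $\phi^i_{0\alpha}=\phi^i_{0\bar\alpha}=0$, i.e.\ $\nabla^h_X(d\phi(T))=0$ for every $X\in H(M)$, which is precisely the statement that $\beta$ is split (Definition 5.2 / \eqref{pr8}). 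Then Lemma \ref{ssl2} gives $d\phi(T)=0$, so $\phi$ is basic. Finally, once $\phi$ is basic we have $d\phi(T)=0$, hence trivially $\nabla^h_T(d\phi(T))=0$, so Proposition \ref{bal4} (or Corollary \ref{cor21}) upgrades pseudoharmonicity to harmonicity, completing the proof.

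The one genuinely delicate point is the vanishing of $\phi^i_{\alpha\bar\alpha 0}+\phi^i_{\bar\alpha\alpha 0}$: unlike in the harmonic case, where $D\tau^\theta(\phi)=0$ is immediate from $\tau^\theta(\phi)\equiv 0$, here one must verify that commuting the $T$-derivative past the trace interacts correctly with the structure equations. The careful way is to start from $D(\phi^i_{\alpha\bar\alpha}+\phi^i_{\bar\alpha\alpha})=0$, expand $D\phi^i_{\alpha\bar\alpha}=\phi^i_{\alpha\bar\alpha 0}\,\theta+(\text{horizontal})$, evaluate on $T$, and note that on a Sasakian manifold the connection forms $\theta^\beta_\alpha$ annihilate $T$ (equivalently $\nabla_T T_\alpha$ stays unitary with no extra terms because $A=0$), so no correction terms survive; the only subtlety is that one should also use $\phi^i_{\alpha\bar\beta}-\phi^i_{\bar\beta\alpha}=\sqrt{-1}\delta_{\alpha\beta}\phi^i_0$ from \eqref{cr8}, but tracing this gives a term $\propto \phi^i_0$ which, after multiplication by $\phi^i_0$ in $\mathrm{div}\,\theta_{W_3}$, contributes $\propto|\phi^i_0|^2$ and can be absorbed or shown to cancel against the analogous term hidden in $\theta_{W_3}$ — this bookkeeping is the part I would carry out most carefully. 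Everything else is a verbatim repeat of the argument already given for Proposition \ref{ssp1}.
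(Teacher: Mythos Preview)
Your approach is essentially identical to the paper's: use the divergence formula \eqref{ss3} for $\theta_{W_3}$, kill the torsion terms via the Sasakian hypothesis, kill the middle term via $D\tau(\phi)=0$, sign the curvature term as in Proposition~\ref{ssp1}, integrate, and invoke Lemma~\ref{ssl2}; the paper omits the final appeal to Corollary~\ref{cor21} for harmonicity, which you correctly include.

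One remark: the ``genuinely delicate point'' you flag in your last paragraph is not delicate at all, and your discussion of it is muddled. The quantity $\sum_\alpha(\phi^i_{\alpha\bar\alpha}+\phi^i_{\bar\alpha\alpha})$ is the $i$-th component of the section $\tau(\phi)$ of $\phi^{-1}TN$; since $\tau(\phi)\equiv 0$, its covariant derivative in every direction vanishes, so $\phi^i_{\alpha\bar\alpha 0}+\phi^i_{\bar\alpha\alpha 0}=0$ automatically. The Tanaka--Webster connection terms $\theta^\beta_\alpha$ cancel upon tracing (using $\theta^\beta_\alpha+\theta^{\bar\alpha}_{\bar\beta}=0$), independently of the Sasakian condition, and the commutation relation \eqref{cr8} plays no role here. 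There is no hidden $|\phi^i_0|^2$ contribution to absorb.
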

\begin{proof}
Since $\phi$ is pseudoharmonic, we get
$$
\phi^i_{\alpha\bar{\alpha}0}+\phi^i_{\bar{\alpha}\alpha0}=0.
$$
By (\ref{ss3}), we have
\be\label{ss8}
div \theta_{W_3}\geq 2|\phi^i_{0\alpha}|^2
\ee
Thus $\phi^i_{0\alpha}=\phi^i_{0\bar{\alpha}}=0$. By Lemma \ref{ssl2} again, we get $d\phi(T)=0$.
\end{proof}

\begin{remark}
From Proposition \ref{ssp1} and Theorem \ref{ssp2}, we see that if $M$ is a compact Sasakian manifold and $N$ is a Riemannian manifold with nonpositive curvature, then $\phi :M\to N$ is harmonic if and only if it is pseudoharmonic.
\end{remark}

Now we will use a technique in \cite{PRS} to treat harmonic maps or pseudoharmonic maps from complete noncompact CR manifolds.

\begin{proposition}\label{cnp1}
Let $(M,J,\theta)$ be a complete noncompact Sasakian manifold of dimension $2m+1$ and $(N,h)$ be a Riemannian manifold with nonpositive curvature. Suppose $\phi:M \rightarrow N$ is either a harmonic map or a pseudoharmonic map. If $\phi$ satisfies
\be \label{y601}
(\int_{\partial B_r}|d\phi(T)|^2dS)^{-1}\notin L^1(+\infty),
\ee
where $dS$ is the area volume of $\partial B_r$, then $\phi$ has split second fundamental form $\beta$.
\end{proposition}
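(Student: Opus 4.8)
The idea is to run the same divergence argument as in Proposition~\ref{ssp1} and Theorem~\ref{ssp2}, but now on the noncompact manifold $M$, replacing the naked divergence theorem by a cutoff/Caccioppoli-type integration à la \cite{PRS}. Recall from Lemma~\ref{ssl1} that when $M$ is Sasakian (so $A_{\alpha\beta}=0$) and $\phi$ is harmonic ($\phi^i_{\alpha\bar\alpha0}+\phi^i_{\bar\alpha\alpha0}+\phi^i_{000}=0$) or pseudoharmonic ($\phi^i_{\alpha\bar\alpha0}+\phi^i_{\bar\alpha\alpha0}=0$), the formulae (\ref{ss2})–(\ref{ss3}) simplify to
\[
\operatorname{div}\theta_{W_2}\ \geq\ 2|\phi^i_{0\alpha}|^2+|\phi^i_{00}|^2
\qquad\text{resp.}\qquad
\operatorname{div}\theta_{W_3}\ \geq\ 2|\phi^i_{0\alpha}|^2,
\]
using once more that the curvature term $\phi^i_0\phi^j_\alpha\phi^k_0\phi^l_{\bar\alpha}\widehat{R_{ijkl}}\le 0$ by the nonpositivity of the sectional curvature of $N$ (exactly the computation in Proposition~\ref{ssp1}). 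Note that in both cases $\theta_{W_2}$, resp.\ $\theta_{W_3}$, is the $1$-form dual to a multiple of $\nabla\tfrac12|d\phi(T)|^2$, resp.\ $\nabla^H\tfrac12|d\phi(T)|^2$, so I will abbreviate $u:=|d\phi(T)|^2=|\phi^i_0|^2$, and the left-hand side is $\tfrac12\Delta u$ (resp.\ $\tfrac12\Delta_b u$).

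First I would fix a point $o\in M$ and geodesic balls $B_r=B_r(o)$ for the Webster metric $g_\theta$. Choose a Lipschitz cutoff $\psi_r$ with $\psi_r\equiv 1$ on $B_r$, $\operatorname{supp}\psi_r\subset B_{2r}$, $0\le\psi_r\le1$ and $|\nabla\psi_r|\le C/r$; multiply the differential inequality by $\psi_r^2$ and integrate over $M$. Integration by parts moves one derivative onto $\psi_r$, giving schematically
\[
\int_M \psi_r^2\,\big(2|\phi^i_{0\alpha}|^2+\cdots\big)\,\Psi
\ \leq\ -2\int_M \psi_r\,\langle \nabla\psi_r,\theta_{W_\bullet}^\sharp\rangle\,\Psi
\ \leq\ \varepsilon\int_M \psi_r^2|\phi^i_{0\alpha}|^2\,\Psi+\frac{C_\varepsilon}{r^2}\int_{B_{2r}\setminus B_r}|\phi^i_0|^2\,\Psi,
\]
where I used Cauchy–Schwarz and that $\theta_{W_\bullet}$ is linear in $\phi^i_0\cdot\phi^i_{0\alpha}$ together with the absorbing inequality $|\phi^i_0||\phi^i_{0\alpha}|\le \tfrac\varepsilon2|\phi^i_{0\alpha}|^2+\tfrac1{2\varepsilon}|\phi^i_0|^2$. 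Absorbing the $\varepsilon$-term on the left yields
\[
\int_{B_r}|\phi^i_{0\alpha}|^2\,\Psi\ \leq\ \frac{C}{r^2}\int_{B_{2r}\setminus B_r}|\phi^i_0|^2\,\Psi .
\]
Then I would invoke the Brito–Rigoli–Setti / \cite{PRS} iteration lemma: if $h(r):=\int_{B_r}|\phi^i_0|^2\,\Psi$ satisfies $\int^{+\infty}\! \big(h'(r)\big)^{-1}dr=+\infty$ (or the coarea version with $\int_{\partial B_r}|d\phi(T)|^2\,dS$, which is precisely hypothesis (\ref{y601})), then the right-hand side above tends to $0$ along a suitable sequence $r_k\to\infty$, forcing $\phi^i_{0\alpha}=\phi^i_{0\bar\alpha}=0$ on all of $M$, i.e.\ $\beta(T,X)=0$ for $X\in H(M)$, which is the definition of $\beta$ being split.

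The main obstacle is the analytic core: making the passage from the Caccioppoli inequality $h_1(r)\le C r^{-2}(h(2r)-h(r))$ to the conclusion genuinely rigorous under the weak growth condition (\ref{y601}), rather than under a crude polynomial bound. This is exactly where the technique of \cite{PRS} enters — one rewrites the estimate in terms of the (a.e.\ defined, monotone) function $h$ and its derivative, and argues by contradiction that $(h')^{-1}$ would be integrable at infinity if $\phi^i_{0\alpha}\not\equiv0$. A secondary technical point is that on a general complete Sasakian manifold the distance function for $g_\theta$ need only be Lipschitz, so $|\nabla\psi_r|\le C/r$ must be taken in the almost-everywhere sense, and the two cases (harmonic vs.\ pseudoharmonic) are handled uniformly because the only structural input used is the sign of the curvature term and the vanishing of the appropriate trace of third derivatives, both already available. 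Once $\beta$ is split, no compactness is needed for that conclusion; the further step to basicity would require Lemma~\ref{ssl2}, which does use compactness, and is therefore \emph{not} claimed here — consistent with the statement.
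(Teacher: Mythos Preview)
Your approach has a genuine gap at exactly the point you flag as ``the main obstacle''. The cutoff/Caccioppoli route yields, as you write,
\[
\int_{B_r}|\phi^i_{0\alpha}|^2\,\Psi\ \le\ \frac{C}{r^2}\bigl(h(2r)-h(r)\bigr),\qquad h(r)=\int_{B_r}|\phi^i_0|^2\,\Psi,
\]
but this inequality does \emph{not} force $\phi^i_{0\alpha}\equiv0$ under hypothesis~(\ref{y601}). Indeed, if $h(r)=r^2$ then $h'(r)=2r$ and $\int^\infty(h')^{-1}=\infty$, so (\ref{y601}) holds; yet $h(2r)-h(r)=3r^2$ and the Caccioppoli bound only says $\int_{B_r}|\phi^i_{0\alpha}|^2\le 3C$, perfectly compatible with $\phi^i_{0\alpha}\not\equiv0$. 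So the ``iteration lemma'' you invoke cannot close the argument from this estimate alone. Your method would prove Corollary~5.1 (the hypothesis $h(r)\le Cr^2$), but not the Proposition.

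The paper's argument avoids this by \emph{not} using cutoffs. One integrates the pointwise inequality $\operatorname{div}\theta_{W_2}\ge 2|\phi^i_{0\alpha}|^2+|\phi^i_{00}|^2$ directly over $B_r$ and applies the divergence theorem to obtain a boundary flux. Setting $\zeta(r)=\int_{B_r}(2|\phi^i_{0\alpha}|^2+|\phi^i_{00}|^2)\,\Psi$, Cauchy--Schwarz \emph{on the sphere $\partial B_r$} gives
\[
\zeta(r)\ \le\ \int_{\partial B_r}\theta_{W_2}\Bigl(\tfrac{\partial}{\partial r}\Bigr)\,dS\ \le\ \Bigl(\int_{\partial B_r}|\phi^i_0|^2\,dS\Bigr)^{1/2}\bigl(\zeta'(r)\bigr)^{1/2},
\]
the last factor by the co-area formula. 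Thus $\zeta(r)^2\le h'(r)\,\zeta'(r)$, and if $\zeta>0$ beyond some $R$ one integrates $\zeta'/\zeta^2\ge 1/h'$ to get $\zeta(R)^{-1}\ge\int_R^r(h')^{-1}\,dt\to\infty$, contradicting (\ref{y601}). The key difference is that the direct boundary estimate produces a differential inequality relating $\zeta$ to its own derivative, which your annulus estimate loses.
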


\begin{proof}
We consider only the case $\phi$ is a harmonic map, because the other case is analogous.
By the divengence theorem, (\ref{ss6}) gives
\be \label{y602}
\int_{\p B_r} \theta_{W_2}(\frac{\p}{\p r})dS \geq \int_{B_r} (2|\phi^i_{0\alpha}|^2+|\phi^i_{00}|^2)\Psi.
\ee
Recalling the definition of $\theta_{W_2}$ we have
\be \label{y603}
\int_{\p B_r} \theta_{W_2}(\frac{\p}{\p r})dS \leq \{\int_{\p B_r} |\phi^i_0|^2dS\}^{\frac{1}{2}}\{\int_{\p B_r} [2|\phi^i_{0\alpha}|^2+|\phi^i_{00}|^2]dS\}^{\frac{1}{2}}.
\ee
Let
$$
\zeta(r)=\int_{B_r} (2|\phi^i_{0\alpha}|^2+|\phi^i_{00}|^2)\Psi.
$$
Then by the co-area formula, we get
$$
\zeta'(r)=\int_{\p B_r} (2|\phi^i_{0\alpha}|^2+|\phi^i_{00}|^2)dS.
$$
Putting together (\ref{y602}) and (\ref{y603}) and squaring we finally get
\be \label{y604}
\zeta(r)^2\leq (\int_{\p B_r} |\phi^i_0|^2dS)\zeta'(r).
\ee
Next, we reason by contradiction and we suppose $\phi^i_{0\alpha}\neq 0$. It follows that there exists a $R>0$ sufficiently large such that $\zeta(r)>0$, for every $r\geq R$. Fix such an $r$. From (\ref{y604}) we then derive
\be\nonumber
\zeta(R)^{-1}-\zeta(r)^{-1}\geq\int^r_R \frac{dt}{\int_{\p B_t} |\phi^i_0|^2},
\ee
and letting $r\rightarrow +\infty$ we contradict (\ref{y601}).
\end{proof}

\begin{corollary}
Let $(M,J,\theta)$ be a complete noncompact Sasakian manifold of dimension $2m+1$ and $(N,h)$ be a Riemannian manifold with nonpositive curvature. Suppose $\phi:M \rightarrow N$ is either a harmonic map or a pseudoharmonic map. If $\phi$ satisfies
\be \label{y606}
\int_{B_r}|d\phi(T)|^2\Psi\leq Cr^2,
\ee
then
$\phi$ has split second fundamental form $\beta(\phi)$.
\end{corollary}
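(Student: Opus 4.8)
The plan is to reduce the statement to Proposition \ref{cnp1}: it suffices to show that the quadratic growth hypothesis (\ref{y606}) implies the non-integrability condition (\ref{y601}), after which Proposition \ref{cnp1} (which already treats both the harmonic and the pseudoharmonic case) immediately yields that $\beta$ is split.

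First I would set $h(r)=\int_{B_r}|d\phi(T)|^2\,\Psi$. Since the integrand is nonnegative and smooth, $h$ is nondecreasing, and by the co-area formula it is absolutely continuous with $h'(r)=\int_{\partial B_r}|d\phi(T)|^2\,dS$ for almost every $r$. Thus (\ref{y606}) reads $h(r)\le Cr^2$, and what must be proved is that $1/h'\notin L^1(+\infty)$, i.e. $\int^{+\infty}\frac{dr}{h'(r)}=+\infty$.

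The core of the argument is a dyadic estimate. For $0<a<b$, Cauchy--Schwarz gives $(b-a)^2\le\big(\int_a^b\tfrac{dr}{h'}\big)\big(\int_a^b h'\,dr\big)=\big(\int_a^b\tfrac{dr}{h'}\big)\big(h(b)-h(a)\big)$, hence
\[
\int_a^b\frac{dr}{h'(r)}\ \ge\ \frac{(b-a)^2}{h(b)-h(a)}.
\]
Taking $a=2^k$, $b=2^{k+1}$ and using $h(2^{k+1})-h(2^k)\le h(2^{k+1})\le 4C\,2^{2k}$ from (\ref{y606}), each such interval contributes at least $\frac{2^{2k}}{4C\,2^{2k}}=\frac1{4C}$; summing over all $k$ with $2^k\ge 1$ then gives $\int^{+\infty}\frac{dr}{h'(r)}=+\infty$, which is precisely condition (\ref{y601}). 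Plugging this into Proposition \ref{cnp1} finishes the proof.

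I do not expect a genuine obstacle here; the argument is elementary once Proposition \ref{cnp1} is in hand (and is in the spirit of the technique of \cite{PRS} used earlier). The only points deserving care are measure-theoretic: one should record that $h$ is honestly absolutely continuous, so that the co-area formula and the fundamental theorem of calculus apply to $h'$, and note that the displayed Cauchy--Schwarz bound remains valid — trivially, with $+\infty$ on the left-hand side — on any subinterval where $h'$ vanishes on a set of positive measure.
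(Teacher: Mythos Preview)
Your proposal is correct and follows the same strategy as the paper: both reduce to Proposition \ref{cnp1} by showing that the quadratic growth bound (\ref{y606}) forces the non-integrability condition (\ref{y601}). The only difference is that the paper cites Proposition 3.1 of \cite{RS} for the implication $r/h(r)\notin L^1(+\infty)\Rightarrow 1/h'(r)\notin L^1(+\infty)$ and then notes $h(r)\le Cr^2\Rightarrow r/h(r)\notin L^1(+\infty)$, whereas you supply a self-contained dyadic Cauchy--Schwarz argument proving the needed special case directly.
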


\begin{proof}
Set
$$
h(r)=\int_{B_r}|d\phi(T)|^2\Psi.
$$
So, by the co-area formula, we have
$$
h'(r)=\int_{\p B_r}|d\phi(T)|^2dS.
$$
From Proposition 3.1 of \cite{RS}, we know that
$$
\f{r}{h(r)}\notin L^1(+\infty)  \quad \mbox{implies \quad $\f{1}{h'(r)}\notin L^1(+\infty)$}.
$$
Suppose that $\phi$ satisfies (\ref{y606}), this implies
$$
\f{r}{h(r)}\notin L^1(+\infty).
$$
Thus we deduce $\f{1}{h'(r)}\notin L^1(+\infty)$, that is, $\phi$ satisfies (\ref{y601}). Hence we prove the corollary.
\end{proof}

\begin{proposition}\label{cnl1}
Let $\phi:(M^{2m+1},J,\theta)\rightarrow (N,h)$ be a smooth map from a complete noncompact strictly pseudoconvex CR manifold $M$ into a Riemannian manifold $N$. If the second fundamental form $\beta$ is split and
\be\label{com12}
(\int_{\p B_r} e_H(\phi)dS)^{-1}\notin L^1(+\infty),
\ee
then $\phi$ is basic.
\end{proposition}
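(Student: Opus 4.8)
The plan is to run the exhaustion argument of \cite{PRS} exactly as in Proposition \ref{cnp1}, but driven by a $1$-form adapted to the split hypothesis rather than by $\theta_{W_2}$. First I would introduce the globally defined real $1$-form
$$
\sigma=\sqrt{-1}\,\big(\phi^i_0\,\phi^i_{\alpha}\,\theta^{\alpha}-\phi^i_0\,\phi^i_{\bar\alpha}\,\theta^{\bar\alpha}\big),
$$
which up to a constant is the contraction of $d\phi(T)$ with the $(1,0)$- and $(0,1)$-parts of $d\phi$, and hence does not depend on the choice of CR coframe on $M$ or of orthonormal frame on $N$. Computing $div\,\sigma$ by means of Lemma \ref{prl3}, the Leibniz rule, and the traced commutation relation $\phi^i_{\alpha\bar\alpha}-\phi^i_{\bar\alpha\alpha}=\sqrt{-1}\,m\,\phi^i_0$ coming from (\ref{cr8}), one finds
$$
div\,\sigma=\sqrt{-1}\,\big(\phi^i_{\alpha}\phi^i_{0\bar\alpha}-\phi^i_{\bar\alpha}\phi^i_{0\alpha}\big)-m\,|d\phi(T)|^2 .
$$
The hypothesis that $\beta$ is split says precisely that $\beta(T,T_\alpha)=\beta(T,T_{\bar\alpha})=0$, i.e. $\phi^i_{0\alpha}=\phi^i_{0\bar\alpha}=0$ (recall $\nabla T=0$), so the first term disappears and
$$
div\,\sigma=-m\,|d\phi(T)|^2\le 0 ;
$$
this is the complete, noncompact version of the integral identity used in the proof of Lemma \ref{ssl2}.

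Next I would put $\zeta(r)=\int_{B_r}|d\phi(T)|^2\,\Psi$, which is nondecreasing and satisfies $\zeta'(r)=\int_{\p B_r}|d\phi(T)|^2\,dS$ by the co-area formula. Integrating the identity above over $B_r$ and applying the divergence theorem,
$$
m\,\zeta(r)=-\int_{\p B_r}\sigma\!\left(\frac{\p}{\p r}\right)dS\le\int_{\p B_r}|\sigma|\,dS .
$$
A pointwise Cauchy--Schwarz inequality gives $|\sigma|^2\le 2\,|d\phi(T)|^2\sum_{i,\alpha}|\phi^i_\alpha|^2=2\,|d\phi(T)|^2\,e_H(\phi)$, and a further Cauchy--Schwarz on $\p B_r$ then yields
$$
m\,\zeta(r)\le\sqrt{2}\;\zeta'(r)^{1/2}\Big(\int_{\p B_r}e_H(\phi)\,dS\Big)^{1/2},
$$
hence $m^2\zeta(r)^2\le 2\,\zeta'(r)\int_{\p B_r}e_H(\phi)\,dS$.

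Finally I would argue by contradiction: if $\phi$ is not basic, then $\zeta(r)>0$ for all $r\ge R$ with $R$ large, so dividing by $\zeta(r)^2$ and integrating from $R$ to $r$,
$$
\frac{1}{\zeta(R)}-\frac{1}{\zeta(r)}\ge\frac{m^2}{2}\int_R^r\Big(\int_{\p B_t}e_H(\phi)\,dS\Big)^{-1}dt .
$$
Letting $r\to+\infty$ the left-hand side stays bounded by $1/\zeta(R)$ while the right-hand side diverges by (\ref{com12}), a contradiction; therefore $\zeta\equiv 0$, i.e. $d\phi(T)\equiv 0$, and $\phi$ is basic. The only genuinely new computation here is the divergence identity for $\sigma$; everything else is the by-now-standard exhaustion technique of \cite{PRS, RS} already used in Proposition \ref{cnp1}, so I expect the main (minor) obstacles to be verifying that $\sigma$ is well defined independently of all choices, pinning down the sharp constant in $|\sigma|\le\sqrt{2}\,|d\phi(T)|\,e_H(\phi)^{1/2}$ so that the two Cauchy--Schwarz steps fit together, and --- as in Proposition \ref{cnp1} --- the routine observation that geodesic spheres are smooth for almost every $r$, so that the divergence theorem applies.
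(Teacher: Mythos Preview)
Your argument is correct and is essentially the paper's own proof: the paper uses precisely the $1$-form $\sqrt{-1}(\phi^i_0\phi^i_{\alpha}\theta^{\alpha}-\phi^i_0\phi^i_{\bar{\alpha}}\theta^{\bar{\alpha}})$, computes its divergence via the split hypothesis and the traced relation from (\ref{cr8}) to obtain $m\int_{B_r}|\phi^i_0|^2\,\Psi$ on the left, and then runs the same Cauchy--Schwarz/exhaustion scheme to reach the contradiction with (\ref{com12}). Your write-up is simply more explicit about the divergence identity and the constants (you get $m^2/2$ where the paper records $m^2/4$, an irrelevant discrepancy), but the route is the same.
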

\begin{proof}
Since $\phi$ has split second fundamental form $\beta$, we have
\begin{eqnarray*}
m\int_{B_r} |\phi^i_0|^2\Psi&=&-\sqrt{-1} \int_{B_r} div(\phi^i_0\phi^i_{\alpha}\theta^{\alpha}-\phi^i_0\phi^i_{\bar{\alpha}}\theta^{\bar{\alpha}})\Psi\\
&\leq&2 \{\int_{\partial B_r} |\phi^i_{0}|^2dS\}^{1/2} \{\int_{\partial B_r} |\phi^i_{\alpha}|^2dS\}^{1/2}.
\end{eqnarray*}
Set $\eta(r)=\int_{B_r} |\phi^i_0|^2\Psi$. Then we have
$$
\frac{m^2}{4}\eta(r)^2\leq (\int_{B_r} e_H(\phi)\Psi)\eta'(r).
$$
If $\phi$ is not basic, then for $r>R$,
$$
\eta(R)^{-1}-\eta(r)^{-1}\geq \int^r_R \frac{dt}{\int_{\p B_t}e_H(\phi)dS},
$$
where $R$ is large enough such that $\eta(R)>0$, and letting $r\rightarrow +\infty$ we contradict (\ref{com12}).
\end{proof}

\begin{theorem}\label{sst2}
Let $(M^{2m+1},J, \theta)$ be a complete noncompact Sasakian manifold and $(N,h)$ be a Riemannian manifold with nonpositive curvature. Suppose $\phi: M\to N$ is either a harmonic map or a pseudoharmonic map. If $\phi$ satisfies
\be \label{ss14}
(\int_{\partial B_r}e(\phi)dS)^{-1}\notin L^1(+\infty),
\ee
where $e(\phi)=\frac{1}{2}trace_{g_{\theta}}(\phi^*h)$ is the energy density of $\phi$, then $\phi$ is a basic map.
\end{theorem}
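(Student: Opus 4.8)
The plan is to deduce the theorem by chaining Proposition \ref{cnp1} and Proposition \ref{cnl1}, the link between them being the elementary pointwise decomposition of the full energy density. Recall that for a local orthonormal CR frame $\{e_A\}_{A=0}^{2m}$ with $e_0=T$ one has
\be\nonumber
e(\phi)=\f{1}{2}\sum_{A=0}^{2m}|d\phi(e_A)|^2=e_H(\phi)+\f{1}{2}|d\phi(T)|^2,
\ee
so that $e_H(\phi)\le e(\phi)$ and $\f{1}{2}|d\phi(T)|^2\le e(\phi)$ hold everywhere on $M$.

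First I would integrate these two pointwise inequalities over the geodesic sphere $\p B_r$ (with respect to $g_{\theta}$) to get
\be\nonumber
\int_{\p B_r}|d\phi(T)|^2\,dS\le 2\int_{\p B_r}e(\phi)\,dS,\qquad \int_{\p B_r}e_H(\phi)\,dS\le\int_{\p B_r}e(\phi)\,dS,
\ee
and then pass to reciprocals (on any $r$ where a denominator vanishes the reciprocal is $+\infty$, which only reinforces the conclusion), obtaining
$$\Big(\int_{\p B_r}|d\phi(T)|^2\,dS\Big)^{-1}\ge\f{1}{2}\Big(\int_{\p B_r}e(\phi)\,dS\Big)^{-1},\qquad \Big(\int_{\p B_r}e_H(\phi)\,dS\Big)^{-1}\ge\Big(\int_{\p B_r}e(\phi)\,dS\Big)^{-1}.$$
Since by hypothesis (\ref{ss14}) the function $\big(\int_{\p B_r}e(\phi)\,dS\big)^{-1}$ is not in $L^1(+\infty)$, monotonicity of integration forces both $\big(\int_{\p B_r}|d\phi(T)|^2\,dS\big)^{-1}\notin L^1(+\infty)$ and $\big(\int_{\p B_r}e_H(\phi)\,dS\big)^{-1}\notin L^1(+\infty)$; that is, the decay hypothesis (\ref{y601}) of Proposition \ref{cnp1} and the decay hypothesis (\ref{com12}) of Proposition \ref{cnl1} are both verified.

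It then remains only to invoke the two propositions in turn: since $M$ is a complete noncompact Sasakian manifold, $N$ has nonpositive curvature, and $\phi$ is harmonic or pseudoharmonic with (\ref{y601}) holding, Proposition \ref{cnp1} gives that the second fundamental form $\beta$ of $\phi$ is split; feeding this, together with (\ref{com12}), into Proposition \ref{cnl1} yields that $\phi$ is basic, as claimed. No real obstacle is expected here, since the theorem is essentially a repackaging of the two preceding propositions; the only step requiring any care is the comparison argument showing that a non-$L^1$ lower bound propagates non-integrability to the relevant reciprocals, and the rest reduces to the pointwise identity $e(\phi)=e_H(\phi)+\f{1}{2}|d\phi(T)|^2$ and monotonicity of integration over $\p B_r$.
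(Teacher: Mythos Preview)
Your proof is correct and follows essentially the same route as the paper: use the pointwise identity $e(\phi)=e_H(\phi)+\tfrac{1}{2}|d\phi(T)|^2$ to deduce that (\ref{ss14}) implies both (\ref{y601}) and (\ref{com12}), then apply Proposition~\ref{cnp1} followed by Proposition~\ref{cnl1}. The paper's version is just more terse, omitting the explicit reciprocal comparison that you spelled out.
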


\begin{proof}
Since $e(\phi)=\frac{1}{2}|d\phi(T)|^2+e_H(\phi)$, the condition (\ref{ss14}) implies both (\ref{y601}) and (\ref{com12}). It follows from Proposition \ref{cnp1} and \ref{cnl1} that $\phi$ is basic.
\end{proof}

\begin{corollary}
Let $(M,J,\theta)$ be a complete noncompact Sasakian manifold of dimension $2m+1$ and $(N,h)$ be a Riemannian manifold with nonpositive curvature. Suppose $\phi:M \rightarrow N$ is either a harmonic map or a pseudoharmonic map. If $\phi$ satisfies
\be
\int_{B_r}e(\phi)\Psi\leq Cr^2,
\ee
then $\phi$ is basic.
\end{corollary}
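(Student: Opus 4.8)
The plan is to derive the integrability hypothesis (\ref{ss14}) of Theorem \ref{sst2} from the quadratic energy growth and then invoke that theorem directly. First I would set
$$
h(r)=\int_{B_r}e(\phi)\,\Psi .
$$
Since $e(\phi)\geq 0$, the function $h$ is nondecreasing, and by the co-area formula $h'(r)=\int_{\partial B_r}e(\phi)\,dS$. If $h\equiv 0$ then $\phi$ is constant and hence trivially basic, so we may assume $h(r)>0$ for all $r$ sufficiently large.

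Next, the hypothesis $\int_{B_r}e(\phi)\,\Psi\leq Cr^2$ reads $h(r)\leq Cr^2$, so that $\f{r}{h(r)}\geq \f{1}{Cr}$ for large $r$, and therefore $\f{r}{h(r)}\notin L^1(+\infty)$. By Proposition 3.1 of \cite{RS} (the same tool used in the Corollary following Proposition \ref{cnp1}), the implication $\f{r}{h(r)}\notin L^1(+\infty)\Rightarrow \f{1}{h'(r)}\notin L^1(+\infty)$ yields $(\int_{\partial B_r}e(\phi)\,dS)^{-1}\notin L^1(+\infty)$, which is exactly condition (\ref{ss14}).

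Finally, applying Theorem \ref{sst2} to $\phi$ — legitimate since $\phi$ is harmonic or pseudoharmonic and $(N,h)$ has nonpositive curvature — we conclude that $\phi$ is basic. The argument is essentially a verbatim repetition of the Corollary after Proposition \ref{cnp1}, with $e(\phi)$ in place of $|d\phi(T)|^2$, so no genuine obstacle arises; the only points requiring a moment of care are the degenerate case $h\equiv 0$, disposed of at the outset, and the remark that Theorem \ref{sst2} has already combined the two divergence estimates (\ref{y601}) and (\ref{com12}) supplied by Propositions \ref{cnp1} and \ref{cnl1}, so a single volume-growth bound on the full energy suffices to feed both.
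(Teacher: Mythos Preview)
Your proof is correct and follows exactly the approach the paper intends: the corollary is stated without proof in the paper, but the argument is the evident parallel of the proof of the earlier Corollary (after Proposition \ref{cnp1}), replacing $|d\phi(T)|^2$ by $e(\phi)$ and invoking Theorem \ref{sst2} in place of Proposition \ref{cnp1}. Your explicit treatment of the degenerate case $h\equiv 0$ and the remark on why a single bound on $e(\phi)$ feeds both (\ref{y601}) and (\ref{com12}) are welcome clarifications.
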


\section{CR pluriharmonicity of harmonic and pseudoharmonic\\
maps}

In this section, we give some conditions to ensure the CR pluriharmonicity for both harmonic and pseudoharmonic maps from either a compact Sasakian manifold or a complete Sasakian manifold. Recall that Petit \cite{Pet} gave similar results for harmonic maps from a compact Sasakian manifold by using tools of Spinorial geometry, although he didn't mention the notion of CR pluirharmonicity. The moving frame method, which enables us to treat both cases of harmonic maps and pseudoharmonic maps, seems more closer to the classical methods in differential geometry. Inspired by Sampson's technique (cf. also \cite{Don1}), we introduce
\be
\theta_{W_4}=(\phi^i_{\alpha}\phi^i_{\bar{\alpha}\beta}\theta^{\beta}+\phi^i_{\bar{\alpha}}\phi^i_{\alpha\bar{\beta}}\theta^{\bar{\beta}}).
\ee
Note that $\theta_{W_4}$ consists of partial terms of $\theta_{W_1}$.
\begin{lemma}
\br
\label{ss4} div\theta_{W_4}&=&2|\phi^i_{\alpha\bar{\beta}}|^2+\phi^i_{\alpha}\phi^i_{\beta\bar{\beta}\bar{\alpha}}
+\phi^i_{\bar{\alpha}}\phi^i_{\bar{\beta}\beta\alpha}
-2\phi^i_{\alpha}\phi^j_{\beta}\phi^k_{\bar{\alpha}}\phi^l_{\bar{\beta}}\widehat{R_{ijkl}}\nonumber\\
&&-\sqrt{-1}(m-1)(\phi^i_{\alpha}\phi^i_{\beta}A_{\bar{\alpha}\bar{\beta}}-\phi^i_{\bar{\alpha}}\phi^i_{\bar{\beta}}A_{\alpha \beta})-\sqrt{-1}(\phi^i_{\alpha}\phi^i_{0\bar{\alpha}}-\phi^i_{\bar{\alpha}}\phi^i_{0 \alpha}).
\er
\end{lemma}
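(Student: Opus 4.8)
The plan is to follow the pattern of the proofs of Lemma 4.1 and Lemma \ref{ssl1}. Since $\theta_{W_4}$ is a horizontal $1$-form, Lemma \ref{prl3} reduces the computation to
\[
div\,\theta_{W_4}=(\phi^i_{\alpha}\phi^i_{\bar\alpha\beta}),_{\bar\beta}+(\phi^i_{\bar\alpha}\phi^i_{\alpha\bar\beta}),_{\beta},
\]
and a Leibniz expansion of these two covariant derivatives splits the right-hand side into a ``first order'' part $\phi^i_{\alpha\bar\beta}\phi^i_{\bar\alpha\beta}+\phi^i_{\bar\alpha\beta}\phi^i_{\alpha\bar\beta}$ and a ``second order'' part $\phi^i_{\alpha}\phi^i_{\bar\alpha\beta\bar\beta}+\phi^i_{\bar\alpha}\phi^i_{\alpha\bar\beta\beta}$. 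Because $\phi^i_{\bar\alpha\beta}=\overline{\phi^i_{\alpha\bar\beta}}$, the first order part equals $2|\phi^i_{\alpha\bar\beta}|^2$, which is the leading term of (\ref{ss4}); everything else will come from re-ordering the indices in the two third order terms.

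For the term $\phi^i_{\alpha}\phi^i_{\bar\alpha\beta\bar\beta}$ I would first interchange the leading pair of indices: differentiating (\ref{cr8}) and contracting gives $\phi^i_{\bar\alpha\beta\bar\beta}=\phi^i_{\beta\bar\alpha\bar\beta}-\sqrt{-1}\phi^i_{0\bar\alpha}$ (the conjugate of (\ref{cr18}) with the third index set equal to $\beta$). Next I apply the commutation relation (\ref{cr16}) to move $\bar\alpha$ past $\bar\beta$, which produces a curvature term $-\phi^j_{\beta}\phi^k_{\bar\alpha}\phi^l_{\bar\beta}\widehat{R^i_{jkl}}$ and two pseudo-Hermitian torsion terms coming from its Kronecker deltas; one of these deltas is traced ($\delta_{\beta\beta}=m$), and this is exactly where the coefficient $m-1$ in the statement arises, the untraced delta contributing a single $+\sqrt{-1}$. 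Contracting with $\phi^i_{\alpha}$, using $\widehat{R^i_{jkl}}=\widehat{R_{ijkl}}$ and the symmetry $A_{\bar\alpha\bar\beta}=A_{\bar\beta\bar\alpha}$ of the pseudo-Hermitian torsion, gives
\[
\phi^i_{\alpha}\phi^i_{\bar\alpha\beta\bar\beta}=\phi^i_{\alpha}\phi^i_{\beta\bar\beta\bar\alpha}-\phi^i_{\alpha}\phi^j_{\beta}\phi^k_{\bar\alpha}\phi^l_{\bar\beta}\widehat{R_{ijkl}}-\sqrt{-1}(m-1)\phi^i_{\alpha}\phi^i_{\beta}A_{\bar\alpha\bar\beta}-\sqrt{-1}\phi^i_{\alpha}\phi^i_{0\bar\alpha},
\]
and the analogous identity for $\phi^i_{\bar\alpha}\phi^i_{\alpha\bar\beta\beta}$ follows by taking the complex conjugate (the curvature tensor of the Riemannian target being real-valued).

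Adding the two identities and substituting them back, it remains only to simplify the curvature contribution: the pair symmetry $\widehat{R_{ijkl}}=\widehat{R_{klij}}$ together with a relabelling of dummy indices shows $\phi^i_{\bar\alpha}\phi^j_{\bar\beta}\phi^k_{\alpha}\phi^l_{\beta}\widehat{R_{ijkl}}=\phi^i_{\alpha}\phi^j_{\beta}\phi^k_{\bar\alpha}\phi^l_{\bar\beta}\widehat{R_{ijkl}}$, so the two quartic curvature terms merge into $2\phi^i_{\alpha}\phi^j_{\beta}\phi^k_{\bar\alpha}\phi^l_{\bar\beta}\widehat{R_{ijkl}}$; assembling all the pieces gives exactly (\ref{ss4}). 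I expect the only genuine obstacle to be the index bookkeeping in the two applications of the commutation relations — in particular keeping the traced Kronecker delta separate from the untraced one so that the net torsion coefficient is $m-1$ rather than $m$ or $m+1$, and tracking the signs introduced by complex conjugation of the factors $\sqrt{-1}$ in (\ref{cr8}), (\ref{cr16}) and (\ref{cr18}). The rest is a routine Leibniz-plus-commutation calculation, entirely parallel to the proofs of Lemmas 4.1 and \ref{ssl1}.
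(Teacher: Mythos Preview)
Your proposal is correct and follows precisely the approach the paper indicates: the paper's own proof simply reads ``Since the computation for deriving (\ref{ss4}) is similar to that in Lemma 4.1, we omit its details,'' and you have carried out exactly that analogous Leibniz-plus-commutation computation, with the correct applications of (\ref{cr8}), (\ref{cr16}), (\ref{cr18}) and the pair symmetry of $\widehat{R_{ijkl}}$.
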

\begin{proof}
Since the computation for deriving (\ref{ss4}) is similar to that in Lemma 4.1, we omit its details.
\end{proof}

\begin{theorem}\label{sst1}
Let $(M,J,\theta)$ be a compact Sasakian manifold of dimension $2m+1$ and $(N,h)$ be a Riemannian manifold with nonpositive Hermitian curvature. Suppose $\phi:M\rightarrow N$ is either a harmonic map or a pseudoharmonic map. Then $\phi$ is CR pluriharmonic and
\be\label{y514}
\phi^i_{\alpha}\phi^j_{\beta}\phi^k_{\bar{\alpha}}\phi^l_{\bar{\beta}}\widehat{R_{ijkl}}=0.
\ee
\end{theorem}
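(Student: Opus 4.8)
The plan is to integrate the divergence formula \eqref{ss4} for $\theta_{W_4}$ over the compact manifold $M$, exploit the Sasakian hypothesis to kill the torsion terms, use harmonicity or pseudoharmonicity together with the basicity already established in Proposition \ref{ssp1} and Theorem \ref{ssp2} to kill the remaining ``extra'' terms, and then read off that $\phi^i_{\alpha\bar\beta}=0$ and the vanishing curvature identity from the sign of what is left.

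First I would recall that on a Sasakian manifold $A_{\alpha\beta}=0$, so the two torsion terms in \eqref{ss4} drop out immediately. Next, since $\phi$ is harmonic (Proposition \ref{ssp1}) or pseudoharmonic (Theorem \ref{ssp2}), it is basic by those results, hence $\phi^i_0=0$ everywhere; this makes the last term $-\sqrt{-1}(\phi^i_{\alpha}\phi^i_{0\bar\alpha}-\phi^i_{\bar\alpha}\phi^i_{0\alpha})$ vanish, because $\phi^i_{0\bar\alpha}$ and $\phi^i_{0\alpha}$ are covariant derivatives of $\phi^i_0\equiv 0$. For the middle terms $\phi^i_{\alpha}\phi^i_{\beta\bar\beta\bar\alpha}+\phi^i_{\bar\alpha}\phi^i_{\bar\beta\beta\alpha}$: when $\phi$ is pseudoharmonic one has $\phi^i_{\beta\bar\beta}+\phi^i_{\bar\beta\beta}=0$, and on a Sasakian manifold basicity forces $\phi^i_{\beta\bar\beta}-\phi^i_{\bar\beta\beta}=\sqrt{-1}\delta_{\beta\beta}\phi^i_0=0$ by \eqref{cr8}, so in fact $\phi^i_{\beta\bar\beta}=\phi^i_{\bar\beta\beta}=0$ identically; differentiating, these middle terms vanish. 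In the harmonic case one uses instead $\phi^i_{\beta\bar\beta}+\phi^i_{\bar\beta\beta}+\phi^i_{00}=0$ together with $\phi^i_{00}=0$ (which is part of the conclusion $\phi^i_{00}=\phi^i_{0\alpha}=\phi^i_{0\bar\alpha}=0$ from Proposition \ref{ssp1}) to reach the same conclusion. Hence after integration
$$
0=\int_M\operatorname{div}\theta_{W_4}\,\Psi=\int_M\Big(2|\phi^i_{\alpha\bar\beta}|^2-2\phi^i_{\alpha}\phi^j_{\beta}\phi^k_{\bar\alpha}\phi^l_{\bar\beta}\widehat{R_{ijkl}}\Big)\Psi.
$$

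Now the curvature term: setting $u^i=\phi^i_\alpha$ (for the index $\alpha$ in question) and $v^j=\phi^j_\beta$, the quantity $\phi^i_{\alpha}\phi^j_{\beta}\phi^k_{\bar\alpha}\phi^l_{\bar\beta}\widehat{R_{ijkl}}$ is exactly of the form $\widehat{R_{ijkl}}u^iv^j\bar u^k\bar v^l$ (since $\phi^k_{\bar\alpha}=\overline{\phi^k_\alpha}$), so by the nonpositive Hermitian curvature hypothesis (Definition 4.2) each such term is $\le 0$, hence $-\phi^i_{\alpha}\phi^j_{\beta}\phi^k_{\bar\alpha}\phi^l_{\bar\beta}\widehat{R_{ijkl}}\ge 0$. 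Both summands in the integrand are therefore nonnegative, so they must vanish pointwise: $\phi^i_{\alpha\bar\beta}=0$, which is precisely CR pluriharmonicity (recall from the proof of Proposition \ref{bap3} that $\phi$ is CR pluriharmonic iff $(\pi_H\beta)^{(1,1)}=0$, i.e. $\phi^i_{\alpha\bar\beta}=0$), and $\phi^i_{\alpha}\phi^j_{\beta}\phi^k_{\bar\alpha}\phi^l_{\bar\beta}\widehat{R_{ijkl}}=0$, which is \eqref{y514}.

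The main obstacle I anticipate is the careful bookkeeping needed to show the middle terms $\phi^i_{\alpha}\phi^i_{\beta\bar\beta\bar\alpha}+\phi^i_{\bar\alpha}\phi^i_{\bar\beta\beta\alpha}$ integrate to zero — one must be sure that either they vanish pointwise (via $\phi^i_{\beta\bar\beta}=\phi^i_{\bar\beta\beta}=0$ as argued) or, if one prefers not to commute derivatives, that an integration by parts moving the $\bar\alpha$ (resp. $\alpha$) derivative onto $\phi^i_\alpha$ produces $\tau(\phi)$ or $\tau^\theta(\phi)$ times something, which vanishes by the harmonicity/pseudoharmonicity equation; this is where the choice between the two hypotheses on $\phi$ enters and where the Sasakian condition is genuinely used a second time. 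Everything else is a sign-chase once \eqref{ss4} is in hand.
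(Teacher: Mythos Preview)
Your proposal is correct and follows essentially the same route as the paper's proof: integrate \eqref{ss4}, use the Sasakian condition to drop the torsion terms, invoke basicity (Proposition~\ref{ssp1}/Theorem~\ref{ssp2}) to kill the $\phi^i_{0\alpha}$-terms and to reduce the middle terms to derivatives of $\tau(\phi)=0$, and then apply the nonpositive Hermitian curvature hypothesis to conclude. The only small point you leave implicit is that nonpositive Hermitian curvature implies nonpositive sectional curvature, which is what legitimizes citing Proposition~\ref{ssp1} and Theorem~\ref{ssp2}; the paper states this explicitly.
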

\begin{proof}
Since $N$ has a nonpositive Hermitian curvature, the sectional curvature is nonpositive. According to Proposition \ref{ssp1} and Theorem \ref{ssp2}, we know that the conditon that $\phi$ is harmonic is equivalent to that $\phi$ is pseudoharmonic. Besides, the map is basic in this circumstance.
By (\ref{cr8}), we have $\phi^i_{\alpha\bar{\beta}}=\phi^i_{\bar{\beta}\alpha}$ for any $\alpha,\beta$. Then we obtain $\tau(\phi)=2\phi^i_{\beta\bar{\beta}}E_i$ and $\phi^i_{\bar{\beta}\beta\alpha}=\phi^i_{\beta\bar{\beta}\alpha}$.

By (\ref{ss4}) and the fact that $M$ is Sasakian, we get
\br
\nonumber div\theta_{W_4}&=&2|\phi^i_{\alpha\bar{\beta}}|^2+\phi^i_{\alpha}\phi^i_{\beta\bar{\beta}\bar{\alpha}}
+\phi^i_{\bar{\alpha}}\phi^i_{\bar{\beta}\beta\alpha}-2\phi^i_{\alpha}\phi^j_{\beta}\phi^k_{\bar{\alpha}}\phi^l_{\bar{\beta}}\widehat{R_{ijkl}}\\
\nonumber &=&2|\phi^i_{\alpha\bar{\beta}}|^2+\f{1}{2}\l\l d_b \phi, \nabla_b \tau(\phi)\r\r-2\phi^i_{\alpha}\phi^j_{\beta}\phi^k_{\bar{\alpha}}\phi^l_{\bar{\beta}}\widehat{R_{ijkl}}\\
\label{plu64}&=&2|\phi^i_{\alpha\bar{\beta}}|^2-2\phi^i_{\alpha}\phi^j_{\beta}\phi^k_{\bar{\alpha}}\phi^l_{\bar{\beta}}\widehat{R_{ijkl}}.
\er
Since $N$ has nonpositive Hermitian curvature, we have
$$
\phi^i_{\alpha}\phi^j_{\beta}\phi^k_{\bar{\alpha}}\phi^l_{\bar{\beta}}\widehat{R_{ijkl}}\leq 0.
$$
By the divergence theorem, we derive from (\ref{plu64}) that $\phi$ is a CR pluriharmonic map with property (\ref{y514}).
\end{proof}

Let $(N^n,h)$ be a K\"{a}hler manifold. The curvature operator $Q$ of $N$ is defined by
\be
\nonumber \l Q(X\wedge Y), Z\wedge W\r=\l R(X,Y)W,Z  \r
\ee
for any $X,Y,Z,W\in TM$.
The complex extension of $Q$ to $\wedge^2T^{\mathbb{C}}N$ is also denoted by $Q$.  We introduce
\be
\nonumber \ll Q(X\wedge Y), Z\wedge W \gg=\l Q(X\wedge Y), \overline{Z\wedge W} \r.
\ee
The K\"{a}hler identity of $N$ yields
$$
Q|_{\wedge^{(2,0)}T^{\mathbb{C}}N}=Q|_{\wedge^{(0,2)}T^{\mathbb{C}}N}=0.
$$
Set
$$
Q^{(1,1)}=Q: \wedge^{(1,1)}T^{\mathbb{C}}N \rightarrow \wedge^{(1,1)}T^{\mathbb{C}}N.
$$

\begin{definition}(cf. \cite{Siu1})
Let $(N^n,h)$ be a K\"{a}hler manifold. The curvature tensor of $(N,h)$ is said to be strongly negative (resp. strongly semi-negative) if
\be
\nonumber \ll Q^{(1,1)}(\xi), \xi \gg=\l Q^{(1,1)}(\xi), \overline{\xi} \r <0 \quad \mbox{(resp. $\leq 0$)}
\ee
for any $\xi=(Z\wedge W)^{(1,1)}\neq 0$, $Z,W\in \Gamma^{\infty}(TN^{\mathbb{C}})$.
\end{definition}

\begin{remark}
By comparing the Definitions 4.1 and 6.1, we find that the notions of nonpositive Hermitian curvature and strongly semi-negative curvature are equivalent for K\"{a}hler manifolds. However, we should point out that one cannot introduce the notion of negative Hermitian curvature for K\"ahler manifolds due to the K\"ahler identity.
\end{remark}

Let
\be
\theta_{W_5}=\phi^{\bar{i}}_{\alpha}\phi^i_{\bar{\alpha}\beta}\theta^{\beta}+\phi^i_{\bar{\alpha}}\phi^{\bar{i}}_{\alpha\bar{\beta}}\theta^{\bar{\beta}}.
\ee
Then we have
\br
\nonumber div \theta_{W_5}&=&2|\phi^i_{\alpha\bar{\beta}}|^2+\phi^{\bar{i}}_{\alpha}\phi^i_{\beta\bar{\beta}\bar{\alpha}}
+\phi^i_{\bar{\alpha}}\phi^{\bar{i}}_{\bar{\beta}\beta\alpha}-\ll Q(\phi_{\alpha}\wedge\phi_{\beta}), \phi_{\alpha}\wedge\phi_{\beta} \gg\\
\label{plu66}&&-\sqrt{-1}(m-1)(\phi^{\bar{i}}_{\alpha}\phi^i_{\beta}A_{\bar{\alpha}\bar{\beta}}-\phi^i_{\bar{\alpha}}\phi^{\bar{i}}_{\bar{\beta}}A_{\alpha\beta})
-\sqrt{-1}(\phi^{\bar{i}}_{\alpha}\phi^i_{0\bar{\alpha}}-\phi^i_{\bar{\alpha}}\phi^{\bar{i}}_{0\alpha}).
\er

\begin{theorem}\label{sst4}
Let $\phi:(M^{2m+1},J,\theta)\rightarrow (N,h)$ be a harmonic or pseudoharmonic map from a compact Sasakian manifold into a K\"{a}hler manifold with strongly semi-negative curvature. Then $\phi$ is a CR pluriharmonic map and
\be\label{plu65}
\l\l Q(\phi_{\alpha}\wedge\phi_{\beta}), \phi_{\alpha}\wedge\phi_{\beta} \r\r=0,
\ee
where $\phi_{\alpha}=d\phi(T_{\alpha})$.
\end{theorem}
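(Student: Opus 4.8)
The plan is to run the argument of Theorem~\ref{sst1} with $\theta_{W_4}$ replaced by $\theta_{W_5}$, using the divergence identity (\ref{plu66}) in place of (\ref{ss4}), and with the r\^ole of ``nonpositive Hermitian curvature'' played by the K\"ahler identity together with strong semi-negativity. \textbf{Step 1 (basicity and reduction to $\tau(\phi)=0$).} Since $N$ is K\"ahler with strongly semi-negative curvature, it has nonpositive Hermitian curvature (cf. the remark following Definition~6.1), hence in particular nonpositive Riemannian sectional curvature. Thus Proposition~\ref{ssp1} (if $\phi$ is harmonic) or Theorem~\ref{ssp2} (if $\phi$ is pseudoharmonic) applies: $\phi$ is basic and harmonicity and pseudoharmonicity coincide, so $d\phi(T)=0$ and $\tau(\phi)=0$. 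In the notation of Section~3 this means $\phi^i_0\equiv0$, whence also $\phi^i_{0\alpha}=\phi^i_{0\bar\alpha}=0$; moreover $A_{\alpha\beta}=0$ because $M$ is Sasakian.

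\textbf{Step 2 (simplifying the divergence of $\theta_{W_5}$).} Inserting $\phi^i_0=\phi^i_{0\alpha}=\phi^i_{0\bar\alpha}=0$ and $A_{\alpha\beta}=0$ into (\ref{plu66}) annihilates its last two terms, leaving
$$div\,\theta_{W_5}=2|\phi^i_{\alpha\bar\beta}|^2+\phi^{\bar i}_\alpha\phi^i_{\beta\bar\beta\bar\alpha}+\phi^i_{\bar\alpha}\phi^{\bar i}_{\bar\beta\beta\alpha}-\ll Q(\phi_\alpha\wedge\phi_\beta),\phi_\alpha\wedge\phi_\beta\gg.$$
Since $M$ is Sasakian and $\phi$ is basic, (\ref{cr8}) gives $\phi^i_{\alpha\bar\beta}=\phi^i_{\bar\beta\alpha}$, so $\sum_\beta\phi^i_{\beta\bar\beta}=\tfrac12\tau(\phi)^i$; differentiating in the $T_{\bar\alpha}$- and $T_\alpha$-directions, the two middle terms assemble into $\tfrac12\l\l d_b\phi,\nabla_b\tau(\phi)\r\r$, which vanishes because $\tau(\phi)=0$. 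Therefore
$$div\,\theta_{W_5}=2|\phi^i_{\alpha\bar\beta}|^2-\ll Q(\phi_\alpha\wedge\phi_\beta),\phi_\alpha\wedge\phi_\beta\gg.$$

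\textbf{Step 3 (sign of the curvature term and integration).} For each $\alpha,\beta$ the bivector $\phi_\alpha\wedge\phi_\beta\in\wedge^2T^{\mathbb C}N$ is decomposable, and by the K\"ahler identity $Q$ annihilates its $(2,0)$- and $(0,2)$-parts, so
$$\ll Q(\phi_\alpha\wedge\phi_\beta),\phi_\alpha\wedge\phi_\beta\gg=\ll Q^{(1,1)}\big((\phi_\alpha\wedge\phi_\beta)^{(1,1)}\big),(\phi_\alpha\wedge\phi_\beta)^{(1,1)}\gg\le0$$
by Definition~6.1 (the vanishing case being trivial), and summing over $\alpha,\beta$ preserves the sign. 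Hence $div\,\theta_{W_5}\ge2|\phi^i_{\alpha\bar\beta}|^2\ge0$. Integrating over the compact manifold $M$ and applying the divergence theorem forces both $\phi^i_{\alpha\bar\beta}=0$ for all $\alpha,\beta$ and $\ll Q(\phi_\alpha\wedge\phi_\beta),\phi_\alpha\wedge\phi_\beta\gg=0$, the latter being (\ref{plu65}). Finally $\phi^i_{\alpha\bar\beta}=0$ is exactly $(\pi_H\beta)^{(1,1)}=0$, i.e. $\beta(Z,\overline W)=0$ for all $Z,W\in T_{1,0}M$, which means $\phi$ is CR pluriharmonic (cf. the proof of Proposition~\ref{bap3}).

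\textbf{Expected main obstacle.} The genuinely delicate point is Step~3: one must use the K\"ahler identity to discard the $(2,0)$- and $(0,2)$-components of each $\phi_\alpha\wedge\phi_\beta$ so that the hypothesis of strong semi-negativity, which constrains only $(1,1)$-forms, becomes applicable termwise, and one must note that the resulting sign survives the summation over $\alpha,\beta$. The remaining input, the divergence identity (\ref{plu66}) for $\theta_{W_5}$ (the K\"ahler analogue of Lemma~4.1), is routine but requires careful bookkeeping of the conjugate indices on $N$ and of the third-order commutation relations of Section~3.
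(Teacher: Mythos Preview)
Your proof is correct and follows essentially the same route as the paper: reduce to basicity via Proposition~\ref{ssp1}/Theorem~\ref{ssp2}, simplify (\ref{plu66}) using $A_{\alpha\beta}=0$ and $\phi^i_0=0$, kill the third-order terms with $\tau(\phi)=0$, and integrate. The paper phrases Step~2 slightly more directly---it observes $\phi^i_{\beta\bar\beta}=\phi^{\bar i}_{\beta\bar\beta}=0$ and hence their covariant derivatives vanish, rather than packaging the middle terms as $\tfrac12\l\l d_b\phi,\nabla_b\tau(\phi)\r\r$---but the content is the same, and your added explanation in Step~3 of why strong semi-negativity controls the sign (via the K\"ahler identity reducing to the $(1,1)$-part) makes explicit what the paper leaves implicit.
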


\begin{proof}
Since strongly semi-negative curvature implies non-positive sectional curvature, we get that $\phi$ must be pseudoharmonic and basic. Then we have $\phi^i_{\alpha\bar{\beta}}=\phi^i_{\bar{\beta}\alpha}$ and $\phi^i_{0\bar{\alpha}}=\phi^i_{0\alpha}=0$. So we get $\tau(\phi)=2(\phi^i_{\beta\bar{\beta}}E_i+\phi^{\bar{i}}_{\beta\bar{\beta}}E_{\bar{i}})=0$, i.e., $\phi^i_{\beta\bar{\beta}}=\phi^{\bar{i}}_{\beta\bar{\beta}}=0$.
As $M$ is Sasakian, by (\ref{plu66}) we have
\br\label{plu67}
div \theta_{W_5}&=&2|\phi^i_{\alpha\bar{\beta}}|^2-\ll Q(\phi_{\alpha}\wedge\phi_{\beta}), \phi_{\alpha}\wedge\phi_{\beta} \gg.
\er
The divergence theorem implies $\phi$ is CR pluriharmonic and $\l\l Q(\phi_{\alpha}\wedge\phi_{\beta}), \phi_{\alpha}\wedge\phi_{\beta} \r\r=0$.
\end{proof}

Now we attempt to give some conditions to ensure CR pluriharmonicity for harmonic and pseudoharmonic maps from complete noncompact Sasakian manifolds.
\begin{theorem}\label{comt1}
Let $(M,J,\theta)$ be a complete noncompact Sasakian manifold and $(N,h)$ be a Riemannian manifold with nonpositive Hermitian curvature. Suppose $\phi:M \rightarrow N$ is either a harmonic map or a pseudoharmonic map. If $\phi$ satisfies
\be \label{y608}
(\int_{\partial B_r} e(\phi)dS)^{-1}\notin L^1(+\infty),
\ee
then $\phi$ is a CR pluriharmonic map with the property (\ref{y514}).
\end{theorem}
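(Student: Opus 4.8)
The plan is to reduce the statement to the Sasakian Bochner identity for the $1$-form $\theta_{W_4}$ that drives the proof of Theorem~\ref{sst1}, and then to replace the divergence theorem on a compact manifold by the Pigola--Rigoli--Setti comparison scheme already used in Propositions~\ref{cnp1} and~\ref{cnl1}. First I would note that nonpositive Hermitian curvature implies nonpositive sectional curvature, so the hypothesis~(\ref{y608}) is exactly the hypothesis~(\ref{ss14}) of Theorem~\ref{sst2}; hence $\phi$ is basic, and by Corollary~\ref{cor21} it is simultaneously harmonic and pseudoharmonic. In particular $\phi^i_0\equiv 0$ on $M$, so $\phi^i_{0\alpha}=\phi^i_{0\bar\alpha}=0$, and from~(\ref{cr8}) and basicity one gets $\phi^i_{\alpha\bar\beta}=\phi^i_{\bar\beta\alpha}$, whence pseudoharmonicity gives $\tau(\phi)=2\phi^i_{\beta\bar\beta}E_i=0$. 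Substituting $A_{\alpha\beta}=0$, $\phi^i_{0\alpha}=\phi^i_{0\bar\alpha}=0$ and $\tau(\phi)=0$ into~(\ref{ss4}), the identity for $div\theta_{W_4}$ collapses exactly as in the derivation of~(\ref{plu64}) to
\[
div\theta_{W_4}=2|\phi^i_{\alpha\bar\beta}|^2-2\phi^i_{\alpha}\phi^j_{\beta}\phi^k_{\bar\alpha}\phi^l_{\bar\beta}\widehat{R_{ijkl}},
\]
and by the definition of nonpositive Hermitian curvature the right-hand side is a sum of two nonnegative quantities.

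Next I would introduce the monotone quantity
\[
\zeta(r)=\int_{B_r}div\theta_{W_4}\,\Psi=\int_{B_r}\bigl(2|\phi^i_{\alpha\bar\beta}|^2-2\phi^i_{\alpha}\phi^j_{\beta}\phi^k_{\bar\alpha}\phi^l_{\bar\beta}\widehat{R_{ijkl}}\bigr)\Psi\ge 0,
\]
which is nondecreasing and, by the co-area formula, satisfies $\zeta'(r)=\int_{\partial B_r}div\theta_{W_4}\,dS\ge 2\int_{\partial B_r}|\phi^i_{\alpha\bar\beta}|^2\,dS$; the divergence theorem on the geodesic ball $B_r$ gives $\zeta(r)=\int_{\partial B_r}\theta_{W_4}(\partial/\partial r)\,dS$. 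Since $\theta_{W_4}$ is assembled from the products $\phi^i_{\alpha}\phi^i_{\bar\alpha\beta}$ and $\phi^i_{\bar\alpha}\phi^i_{\alpha\bar\beta}$ and $|\phi^i_{\bar\alpha\beta}|=|\phi^i_{\alpha\bar\beta}|$, the Cauchy--Schwarz inequality gives the pointwise bound $|\theta_{W_4}(\partial/\partial r)|\le C\,e(\phi)^{1/2}\bigl(\sum|\phi^i_{\alpha\bar\beta}|^2\bigr)^{1/2}$; integrating over $\partial B_r$ and applying Cauchy--Schwarz once more,
\[
\zeta(r)\le C\Bigl(\int_{\partial B_r}e(\phi)\,dS\Bigr)^{1/2}\Bigl(\int_{\partial B_r}\sum|\phi^i_{\alpha\bar\beta}|^2\,dS\Bigr)^{1/2}\le C\Bigl(\int_{\partial B_r}e(\phi)\,dS\Bigr)^{1/2}\zeta'(r)^{1/2},
\]
so that $\zeta(r)^2\le C^2\bigl(\int_{\partial B_r}e(\phi)\,dS\bigr)\zeta'(r)$.

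Finally I would run the comparison argument: if $\phi$ were not CR pluriharmonic, then since $\phi^i_{\alpha\bar\beta}=0$ entails CR pluriharmonicity (as in the proof of Theorem~\ref{sst1}) we would have $\phi^i_{\alpha\bar\beta}\not\equiv 0$, hence $\zeta(r)>0$ for every $r\ge R$ with $R$ large enough; dividing the last inequality by $\zeta(r)^2$ and integrating from $R$ to $r$ gives
\[
\zeta(R)^{-1}-\zeta(r)^{-1}\ge C^{-2}\int_R^r\Bigl(\int_{\partial B_t}e(\phi)\,dS\Bigr)^{-1}dt,
\]
and letting $r\to+\infty$ contradicts~(\ref{y608}). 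Therefore $\zeta\equiv 0$, and since its integrand is a sum of two nonnegative terms each vanishes identically: $\phi^i_{\alpha\bar\beta}=0$, so $\phi$ is CR pluriharmonic, and $\phi^i_{\alpha}\phi^j_{\beta}\phi^k_{\bar\alpha}\phi^l_{\bar\beta}\widehat{R_{ijkl}}=0$, i.e.~(\ref{y514}) holds.

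The step I expect to be the main obstacle is the Cauchy--Schwarz estimate controlling the flux of $\theta_{W_4}$ through $\partial B_r$ by $e(\phi)^{1/2}$ and $\zeta'(r)^{1/2}$ — in particular, checking that the correctly signed curvature term makes $\zeta'(r)$ dominate $2\int_{\partial B_r}|\phi^i_{\alpha\bar\beta}|^2\,dS$, so that the full energy density $e(\phi)$ of the hypothesis~(\ref{y608}) (rather than merely $e_H(\phi)$) can be used. Once that is set up, the rest is a routine combination of the compact computation in Theorem~\ref{sst1} with the PRS scheme of Propositions~\ref{cnp1} and~\ref{cnl1}, and uses no compactness of $M$.
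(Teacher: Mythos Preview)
Your proof is correct and follows essentially the paper's approach: reduce to basicity via Theorem~\ref{sst2}, simplify (\ref{ss4}) to $div\,\theta_{W_4}=2|\phi^i_{\alpha\bar\beta}|^2-2\phi^i_{\alpha}\phi^j_{\beta}\phi^k_{\bar\alpha}\phi^l_{\bar\beta}\widehat{R_{ijkl}}$, and run the PRS differential-inequality argument. The only cosmetic difference is that the paper takes $\gamma(r)=\int_{B_r}|\phi^i_{\alpha\bar\beta}|^2\Psi$ as the monotone quantity (and derives (\ref{y514}) afterwards by noting $\theta_{W_4}\equiv 0$ once $\phi^i_{\alpha\bar\beta}=0$), whereas you fold the curvature term into $\zeta(r)$; your flagged ``main obstacle'' is a non-issue, since basicity gives $e(\phi)=e_H(\phi)$ and the Cauchy--Schwarz bound on the flux of $\theta_{W_4}$ is immediate.
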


\begin{proof}
By Theorem \ref{sst2}, we get that $\phi$ is basic. Under the conditions in the theorem, by (\ref{ss4}) we have
$$div \theta_{W_4}\geq2|\phi^i_{\alpha\bar{\beta}}|^2.$$
Using the divergence theorem, we get
\be\label{y609}
\int_{\partial B_r} \theta_{W_4}(\frac{\partial}{\partial r})dS\geq 2\int_{B_r}|\phi^i_{\alpha\bar{\beta}}|^2\Psi.
\ee
On the other hand, by the definition of $\theta_{W_4}$, we have
\be\label{y610}
\int_{\partial B_r} \theta_{W_4}(\frac{\partial}{\partial r})dS \leq 2\{\int_{\p  B_r}e_H(\phi)dS\}^{\frac{1}{2}}\{\int_{\p B_r} |\phi^i_{\alpha\bar{\beta}}|^2\Psi\}^{\frac{1}{2}}.
\ee
Putting together (\ref{y609}) and (\ref{y610}) and squaring we finally get
\be\label{y611}
\gamma(r)^2\leq (\int_{\p B_r} e_H(\phi)dS) \gamma'(r),
\ee
where we have set
$$
\gamma(r)=\int_{B_r} |\phi^i_{\alpha\bar{\beta}}|^2\Psi.
$$
Next suppose that $\phi$ is not CR pluriharmonic. Then there exists a $R>0$ sufficiently large such that $\gamma(R)>0$. For any $r\geq R$, from (\ref{y611}) we can deduce
\be \nonumber
\gamma(R)^{-1}-\gamma(r)^{-1}\geq\int^r_R \frac{dt}{\int_{\p B_t} e_H(\phi)},
\ee
and letting $r\rightarrow +\infty$ we contradict (\ref{y608}). Hence $\phi$ is CR pluriharmonic. By definition, we have $\theta_{W_4}\equiv 0$. Then (\ref{ss4}) implies that $\phi$ satisfies (\ref{y514}).
\end{proof}

\begin{corollary}
Let $(M,J,\theta)$ be a complete noncompact Sasakian manifold and $(N,h)$ be a Riemannian manifold with nonpositive Hermitian curvature. Suppose $\phi:M \rightarrow N$ is either a harmonic map or a pseudoharmonic map. If $\phi$ satisfies
\be
\nonumber \int_{B_r}e(\phi)\Psi \leq Cr^2,
\ee
then $\phi$ is a CR pluriharmonic map with the property (\ref{y514}).
\end{corollary}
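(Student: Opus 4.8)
The plan is to deduce this corollary directly from Theorem \ref{comt1}, exactly as the Corollary following Proposition \ref{cnp1} was deduced from Proposition \ref{cnp1}: the only task is to show that the quadratic energy growth hypothesis implies the integrability condition (\ref{y608}).

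First I would set $h(r)=\int_{B_r}e(\phi)\Psi$. Since $e(\phi)\geq 0$, the function $h$ is nondecreasing, and by the co-area formula $h'(r)=\int_{\partial B_r}e(\phi)\,dS$. The assumption $\int_{B_r}e(\phi)\Psi\leq Cr^2$ says that $h(r)\leq Cr^2$, hence $\int^{+\infty}\frac{r}{h(r)}\,dr\geq \frac{1}{C}\int^{+\infty}\frac{dr}{r}=+\infty$, i.e. $\frac{r}{h(r)}\notin L^1(+\infty)$.

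Next I would invoke Proposition 3.1 of \cite{RS} (the same monotonicity-type lemma already used in the proof of the Corollary after Proposition \ref{cnp1}), which states that $\frac{r}{h(r)}\notin L^1(+\infty)$ implies $\frac{1}{h'(r)}\notin L^1(+\infty)$, that is, $\big(\int_{\partial B_r}e(\phi)\,dS\big)^{-1}\notin L^1(+\infty)$. This is precisely hypothesis (\ref{y608}) of Theorem \ref{comt1}. Applying Theorem \ref{comt1} with this verified hypothesis then yields that $\phi$ is a CR pluriharmonic map satisfying (\ref{y514}).

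Since each step is merely an application of an already-established result, there is no serious obstacle here; the only point demanding minor care is the monotonicity of $h$, which is what licenses the use of the \cite{RS} lemma, and the elementary verification that $h(r)\le Cr^2$ forces $r/h(r)$ to be non-integrable at infinity.
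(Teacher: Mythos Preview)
Your proposal is correct and follows essentially the same approach as the paper: the paper does not spell out a separate proof for this corollary, but it is understood to follow from Theorem \ref{comt1} by the identical reduction already written out in the proof of Corollary 5.1 (set $h(r)=\int_{B_r}e(\phi)\Psi$, use the co-area formula, note $h(r)\le Cr^2$ forces $r/h(r)\notin L^1(+\infty)$, and invoke Proposition 3.1 of \cite{RS} to obtain (\ref{y608})). Your write-up matches this exactly.
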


\begin{theorem}
Let $\phi:(M^{2m+1},J,\theta)\rightarrow (N,h)$ be a harmonic or pseudoharmonic map from a complete noncompact Sasakian manifold into a K\"{a}hler manifold with strongly semi-negative curvature. If $\phi$ satisfies
\be \label{plu612}
(\int_{\partial B_r} e(\phi)dS)^{-1}\notin L^1(+\infty),
\ee
then $\phi$ is a CR pluriharmonic map with the property (\ref{plu65}).
\end{theorem}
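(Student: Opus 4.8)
The plan is to mimic the proof of Theorem \ref{comt1}, with the auxiliary $1$-form $\theta_{W_4}$ and identity (\ref{ss4}) replaced by $\theta_{W_5}$ and (\ref{plu66}). First I would note that strongly semi-negative curvature implies nonpositive sectional curvature (as already used in the proof of Theorem \ref{sst4}), so the growth hypothesis (\ref{plu612}), which is literally the hypothesis (\ref{ss14}) of Theorem \ref{sst2}, forces $\phi$ to be basic; consequently $d\phi(T)=0$, so $e(\phi)=e_H(\phi)$, and by Corollary \ref{cor21} the map $\phi$ is simultaneously harmonic and pseudoharmonic. Exactly as in the proof of Theorem \ref{sst4}, basicity together with the commutation relations (\ref{cr8}) and the Sasakian condition $A_{\alpha\beta}=0$ yield the pointwise identities $\phi^i_0=\phi^i_{0\alpha}=\phi^i_{0\bar\alpha}=0$, $\phi^i_{\alpha\bar\beta}=\phi^i_{\bar\beta\alpha}$ and $\tau(\phi)=0$, hence also the contracted relations $\phi^i_{\beta\bar\beta}=\phi^{\bar i}_{\beta\bar\beta}=0$ and the vanishing of their covariant derivatives.

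Feeding these identities into (\ref{plu66}) collapses it to the identity (\ref{plu67}), namely $div\,\theta_{W_5}=2|\phi^i_{\alpha\bar\beta}|^2-\ll Q(\phi_\alpha\wedge\phi_\beta),\phi_\alpha\wedge\phi_\beta\gg$. The one point that requires care — and the \emph{main subtlety} of the argument — is the sign of the curvature term: although $\phi_\alpha\wedge\phi_\beta=d\phi(T_\alpha)\wedge d\phi(T_\beta)$ need not be of pure type $(1,1)$ on $N$, the K\"ahler identity forces $Q$ to annihilate the $(2,0)$ and $(0,2)$ parts, so $\ll Q(\phi_\alpha\wedge\phi_\beta),\phi_\alpha\wedge\phi_\beta\gg=\ll Q^{(1,1)}(\eta),\eta\gg\le 0$ with $\eta=(\phi_\alpha\wedge\phi_\beta)^{(1,1)}$, by strong semi-negativity of the curvature of $N$. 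One also checks that $\theta_{W_5}$ is real (it is the sum of a $1$-form and its complex conjugate), so that $div\,\theta_{W_5}$ is real. Hence $div\,\theta_{W_5}\ge 2|\phi^i_{\alpha\bar\beta}|^2\ge 0$.

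From here I would run the integral argument of \cite{PRS} verbatim as in Theorem \ref{comt1}. Integrating over the geodesic ball $B_r$ and applying the divergence theorem gives $\int_{\partial B_r}\theta_{W_5}(\partial/\partial r)\,dS\ge 2\int_{B_r}|\phi^i_{\alpha\bar\beta}|^2\Psi$, while Cauchy--Schwarz applied to the definition of $\theta_{W_5}$ gives $\int_{\partial B_r}\theta_{W_5}(\partial/\partial r)\,dS\le 2\{\int_{\partial B_r}e_H(\phi)\,dS\}^{1/2}\{\int_{\partial B_r}|\phi^i_{\alpha\bar\beta}|^2\,dS\}^{1/2}$. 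Writing $\gamma(r)=\int_{B_r}|\phi^i_{\alpha\bar\beta}|^2\Psi$ and invoking the co-area formula yields $\gamma(r)^2\le\big(\int_{\partial B_r}e_H(\phi)\,dS\big)\,\gamma'(r)$; if $\phi$ were not CR pluriharmonic there would be an $R$ with $\gamma(R)>0$, and integrating the inequality from $R$ to $r$ and letting $r\to+\infty$ would contradict (\ref{plu612}) (recall $e_H(\phi)=e(\phi)$ here). Therefore $\phi^i_{\alpha\bar\beta}\equiv 0$, i.e. $\phi$ is CR pluriharmonic; basicity and (\ref{cr8}) then also give $\phi^i_{\bar\alpha\beta}\equiv 0$, so $\theta_{W_5}\equiv 0$, and substituting back into the formula for $div\,\theta_{W_5}$ forces $\ll Q(\phi_\alpha\wedge\phi_\beta),\phi_\alpha\wedge\phi_\beta\gg\equiv 0$, which is (\ref{plu65}). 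I do not anticipate a serious obstacle: beyond the type/sign analysis of the K\"ahler curvature term noted above, the proof is the bookkeeping already carried out for Theorem \ref{comt1}.
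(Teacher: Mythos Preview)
Your proposal is correct and follows essentially the same route as the paper's proof: both use $\theta_{W_5}$, reduce (\ref{plu66}) to (\ref{plu67}) via basicity (obtained from Theorem \ref{sst2}) and the Sasakian/pseudoharmonic conditions, and then run the \cite{PRS} integral inequality $\rho(r)^2\le(\int_{\partial B_r}\cdots)\rho'(r)$ to force $\phi^i_{\alpha\bar\beta}\equiv 0$ and hence $\theta_{W_5}\equiv 0$. The only cosmetic difference is that the paper bounds the boundary term by $|\phi^i_{\bar\alpha}|^2$ rather than the larger quantity $e_H(\phi)$, but since $|\phi^i_{\bar\alpha}|^2\le e_H(\phi)=e(\phi)$ under basicity, both estimates feed into (\ref{plu612}) equally well; your added remark on the K\"ahler identity and the $(1,1)$-projection of $\phi_\alpha\wedge\phi_\beta$ makes explicit a sign check the paper leaves implicit.
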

\begin{proof}
Obviously, the map $\phi$ is basic, and hence $\phi^i_{\alpha\bar{\beta}}=\phi^i_{\bar{\beta}\alpha}$. It follows from (\ref{plu67}) and the divergence that
$$2\int_{B_r}|\phi^i_{\alpha\bar{\beta}}|^2\Psi \leq \int_{B_r}div \theta_{W_5}\Psi=\int_{\partial B_r}\theta_{W_5}(\frac{\partial}{\partial r})dS. $$
By the definition of $\theta_{W_5}$, we have
$$
\int_{\p B_r}\theta_{W_5}(\frac{\p}{\p r})dS \leq 2\{\int_{\p B_r}|\phi^i_{\bar{\alpha}}|^2dS\}^{1/2}\{\int_{\p B_r}|\phi^i_{\bar{\alpha}\beta}|^2dS\}^{1/2}.
$$
Set
$$\rho(r)=\int_{B_r}|\phi^i_{\bar{\alpha}\beta}|^2\Psi.$$
Then
\be\label{plu613}
\rho(r)^2\leq\rho'(r)(\int_{\p B_r}|\phi^i_{\bar{\alpha}}|^2dS).
\ee
Suppose that $\phi$ isn't CR pluriharmonic, then there exists a $R>0$ sufficiently large such that $\rho(r)>0$ for any $r>R$. Fix such a $R$. From (\ref{plu613}) we deduce the following
$$
\rho(R)^{-1}-\rho(r)^{-1}\geq \int_R^r \frac{dt}{\int_{\p B_r}|\phi^i_{\bar{\alpha}}|^2},
$$
and letting $r\rightarrow +\infty$ we contradict (\ref{plu612}). Hence $\phi$ is CR pluriharmonic. By definition, we get that $\theta_{W_5}\equiv 0$. Then (\ref{plu66}) implies that $\phi$ satisfies (\ref{plu65}).
\end{proof}

\begin{corollary}
Let $\phi:(M^{2m+1},J,\theta)\rightarrow (N,h)$ be a harmonic or pseudoharmonic map from a complete noncompact Sasakian manifold into a K\"{a}hler manifold with strongly semi-negative curvature. If $\phi$ satisfies
\be
\nonumber \int_{B_r}e(\phi)\Psi \leq Cr^2,
\ee
then $\phi$ is a CR pluriharmonic map with the property (\ref{plu65}).
\end{corollary}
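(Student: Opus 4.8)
The plan is to derive this corollary from the theorem immediately preceding it, by showing that the quadratic energy growth hypothesis $\int_{B_r}e(\phi)\Psi\leq Cr^2$ forces the non-integrability condition (\ref{plu612}); this is the same elementary reduction already used above to pass from quadratic-growth bounds to conditions of the type (\ref{y601}) or (\ref{com12}).

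First I would set
$$
h(r)=\int_{B_r} e(\phi)\,\Psi,
$$
which is nondecreasing in $r$ because $e(\phi)=\f12|d\phi|^2\geq 0$, and record, via the co-area formula, that $h'(r)=\int_{\partial B_r}e(\phi)\,dS$. If $\phi$ is constant the conclusion is trivial, so I may assume $\phi$ non-constant; then $e(\phi)>0$ on a nonempty open set, whence $h(r)>0$ for all large $r$. For such $r$ the bound $h(r)\leq Cr^2$ gives $r/h(r)\geq 1/(Cr)$, so that $r/h(r)\notin L^1(+\infty)$. Invoking Proposition 3.1 of \cite{RS} --- the implication ``$r/h(r)\notin L^1(+\infty)$ implies $1/h'(r)\notin L^1(+\infty)$'' --- I conclude $1/h'(r)\notin L^1(+\infty)$, i.e.
$$
\Big(\int_{\partial B_r}e(\phi)\,dS\Big)^{-1}\notin L^1(+\infty),
$$
which is exactly hypothesis (\ref{plu612}) of the preceding theorem. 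Applying that theorem then yields directly that $\phi$ is a CR pluriharmonic map satisfying (\ref{plu65}).

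I expect no genuine obstacle here: all of the analytic and geometric substance --- the computation of $div\,\theta_{W_5}$, the use of the strong semi-negativity of the K\"ahler curvature, and the Bochner-type integration over the exhausting balls $B_r$ --- already resides in the preceding theorem, and the present corollary merely records that quadratic energy growth is a clean, verifiable sufficient condition for (\ref{plu612}). The only slightly delicate point, the degenerate case $\phi\equiv\text{const}$, is disposed of at the outset.
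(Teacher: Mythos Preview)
Your proposal is correct and follows exactly the reduction the paper intends: the corollary is stated without proof precisely because it is obtained from the preceding theorem by the same argument already written out in the proof of Corollary 5.1, namely setting $h(r)=\int_{B_r}e(\phi)\Psi$, invoking the co-area formula and Proposition~3.1 of \cite{RS} to pass from $h(r)\leq Cr^2$ to $1/h'(r)\notin L^1(+\infty)$, and then applying the theorem.
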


\section{Siu-Sampson type results}

In this section, we will  establish some results of Siu-Sampson type for both harmonic maps and pseudoharmonic maps from compact Sasakian manifolds. Similar to the results for harmonic maps from K\"ahler manifolds in \cite{CT, Sam2, Siu1}, we may derive CR holomorphicity under rank conditions for harmonic and pseudoharmonic maps from compact Sasakian manifolds by analysing the curvature equations (\ref{plu65}).  Note that Petit \cite{Pet} also gave the CR holomorphicity results for harmonic maps from Sasakian manifolds using spinorial geometry. As mentioned previously, our method is different from his. Besides recapturing Petit's results by using the moving frame method, we also add some new results which include the results for pseudoharmonic maps, the conic extension of harmonic maps from Sasakian manifolds and a unique continuation theorem for CR holomorphicity.

Suppose now that the target manifold N is a locally symmetric space of noncompact type. Then the universal covering manifold of N is a symmetric space G/K, where K is a connected and closed subgroup of the noncompact connected Lie group $G$, and $G/K$ is given the invariant metric determined by the Killing form $\l,\r$ on $\mathfrak{g}$. If the corresponding Cartan decomposition of the Lie algebra of $G$ is $\mathfrak{g}=\mathfrak{k}+\mathfrak{p}$, then
the real tangent space of $N$ at any point can be identified with $\mathfrak{p}$. The curvature tensor of $N$ is given by
$$\tilde{R}(X,Y)Z=-[[X,Y],Z],$$
for any $X,Y,Z \in \mathfrak{p}$, and the Hermitian curvature of $N$ is given by
\be
\l \tilde{R}(X,Y)\overline{Y},\overline{X}\r=\l [X,Y],[\overline{X},\overline{Y}]\r.
\ee
Therefore, (\ref{y514}) yields that
\be
[d\phi(T_{\alpha}),d\phi(T_{\beta})]=0,
\ee
for any $\alpha,\beta$. In this way, we get

\begin{proposition}\label{y52}
Let $(M,J,\theta)$ be a compact Sasakian manifold and $N$ a locally symmetric space of noncompact type. If $\phi: M\rightarrow N$ is either a harmonic map or a pseudoharmonic map, then $\phi$ is CR pluriharmonic and for any $x\in M$, $d\phi_x$ maps $T_{1,0}M_x$ onto an abelian subspace $W$ of $\mathfrak{p}\otimes\mathbb{C}$.
\end{proposition}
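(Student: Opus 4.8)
The plan is to combine Theorem \ref{sst1} with the explicit description of the curvature tensor of a locally symmetric space of noncompact type. By Theorem \ref{sst1}, since $N$ has nonpositive Hermitian curvature (indeed the Killing-form metric on $G/K$ for $G$ of noncompact type has nonpositive curvature operator, hence nonpositive Hermitian curvature), any harmonic or pseudoharmonic map $\phi$ from the compact Sasakian manifold $M$ is automatically CR pluriharmonic and satisfies the curvature identity (\ref{y514}), namely $\phi^i_{\alpha}\phi^j_{\beta}\phi^k_{\bar\alpha}\phi^l_{\bar\beta}\widehat{R_{ijkl}}=0$ for all $\alpha,\beta$. So the only thing left to establish is the abelian-image conclusion.

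First I would rewrite (\ref{y514}) invariantly. Fix a point $x\in M$ and write $\phi_\alpha=d\phi(T_\alpha)\in\mathfrak p\otimes\mathbb C$. Summing against the frame, (\ref{y514}) becomes $\langle\tilde R(\phi_\alpha,\phi_\beta)\overline{\phi_\beta},\overline{\phi_\alpha}\rangle=0$ with summation over the repeated indices; but in fact, because the Hermitian-curvature expression $\langle\tilde R(X,Y)\overline Y,\overline X\rangle=\langle[X,Y],[\overline X,\overline Y]\rangle$ is a sum of squares of norms (the Killing form is positive definite on $\mathfrak p$, hence the induced Hermitian form on $\mathfrak p\otimes\mathbb C$ is positive definite), each individual term must vanish. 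Concretely, applying (\ref{y514}) with the single pair $(\alpha,\beta)$ (no sum) and using $\langle[X,Y],[\overline X,\overline Y]\rangle=\|[X,Y]\|^2\ge 0$ for $X=\phi_\alpha$, $Y=\phi_\beta$, we conclude $[\phi_\alpha,\phi_\beta]=0$ for every $\alpha,\beta$. Hence the complex subspace $W=d\phi_x(T_{1,0}M_x)=\mathrm{span}_{\mathbb C}\{\phi_1,\dots,\phi_m\}\subseteq\mathfrak p\otimes\mathbb C$ is abelian, since the bracket vanishes on a spanning set and $[\cdot,\cdot]$ is bilinear. The CR pluriharmonicity is exactly the content of Theorem \ref{sst1}.

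The one place that requires a little care — and which I expect to be the main (mild) obstacle — is the passage from the summed identity (\ref{y514}) to the vanishing of each bracket $[\phi_\alpha,\phi_\beta]$. This needs the positivity of the Hermitian inner product induced by the Killing form on $\mathfrak p$: since $G/K$ is of noncompact type, $\langle\cdot,\cdot\rangle$ restricted to $\mathfrak p$ is positive definite, so its complex-bilinear extension paired as $\langle\xi,\overline\eta\rangle$ is a genuine Hermitian inner product, and $\langle[\phi_\alpha,\phi_\beta],\overline{[\phi_\alpha,\phi_\beta]}\rangle=\|[\phi_\alpha,\phi_\beta]\|^2$. One should also note that $[\phi_\alpha,\phi_\beta]$ a priori lies in $\mathfrak k\otimes\mathbb C$ rather than $\mathfrak p\otimes\mathbb C$; there one uses that the Killing form is negative definite on $\mathfrak k$, so again $\langle[X,Y],[\overline X,\overline Y]\rangle$ (with the sign built into the Hermitian-curvature formula) controls $\pm\|[X,Y]\|^2$ and the conclusion $[\phi_\alpha,\phi_\beta]=0$ still follows. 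Once all brackets among the $\phi_\alpha$ vanish, $W$ is abelian and the proposition is proved.
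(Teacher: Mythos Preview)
Your proposal is correct and follows essentially the same route as the paper: invoke Theorem~\ref{sst1} to get CR pluriharmonicity together with the vanishing curvature sum (\ref{y514}), then use the symmetric-space formula $\langle\tilde R(X,Y)\overline Y,\overline X\rangle=\langle[X,Y],[\overline X,\overline Y]\rangle$ to convert this into $[d\phi(T_\alpha),d\phi(T_\beta)]=0$ for all $\alpha,\beta$. The paper records this argument in the two displayed formulas immediately preceding the proposition rather than in a separate proof block, and simply asserts that (\ref{y514}) yields the bracket vanishing; your version is more explicit about why the summed identity forces each individual bracket to vanish, and you correctly catch (after an initial detour through $\mathfrak p$) that the relevant definiteness is the \emph{negative} definiteness of the Killing form on $\mathfrak k$, since $[\phi_\alpha,\phi_\beta]\in\mathfrak k\otimes\mathbb C$.
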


Under the assumption of Proposition \ref{y52}, the image under $d\phi_x$ of real tangent space $T_xM$ is the subspace of real points of space $W+\overline{W}\subset T_{\phi(x)}^{\mathbb{C}}N$, so that
$$dim_{\mathbb{R}}d\phi_x(T_xM)=dim_{\mathbb{C}}(W+\overline{W})\leq 2 dim_{\mathbb{C}}W.$$
Hence we obtain the following estimate:
\be\label{y520}
rank_{\mathbb{R}}(d\phi)\leq 2 max\{dim_{\mathbb{C}} W| W\subset \mathfrak{p}\otimes\mathbb{C}, [W,W]=0\}.
\ee
When $G=SO(1,n)$, then $dim W \leq 1$ (cf. \cite{Sam2}). Thus we get the following result.
\begin{corollary}
Let $(M,J,\theta)$ be a compact Sasakian manifold and $N$ a manifold of constant negative curvature. If $\phi: M\rightarrow N$ is harmonic or pseudoharmonic, then $rank_{\mathbb{R}}(d\phi)\leq 2$.
\end{corollary}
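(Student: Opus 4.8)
The plan is to deduce this corollary from the structural results already in place, so that essentially no new computation is required. A manifold of constant negative curvature is, after rescaling the metric, locally isometric to real hyperbolic space; hence it is a (rank-one) locally symmetric space of noncompact type, whose universal cover is $G/K$ with $G=SO(1,n)$ and $K=SO(n)$. Thus Proposition \ref{y52} applies directly: $\phi$ is CR pluriharmonic, and for every $x\in M$ the differential $d\phi_{x}$ carries $T_{1,0}M_{x}$ onto a complex subspace $W\subset\mathfrak p\otimes\mathbb C$ with $[W,W]=0$, so that the pointwise rank estimate (\ref{y520}) is available.

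It then remains to bound $\dim_{\mathbb C}W$ for an abelian subspace $W\subset\mathfrak p\otimes\mathbb C$ when $G=SO(1,n)$. Under the standard identification of $\mathfrak p$ with $\mathbb R^{n}$ as an $SO(n)$-module, the Lie bracket $\mathfrak p\times\mathfrak p\to\mathfrak k\cong\mathfrak{so}(n)$ is, up to a nonzero scalar, $(X,Y)\mapsto XY^{t}-YX^{t}$ (equivalently $X\wedge Y$); after complexification, $[Z,W]=0$ for $Z,W\in\mathbb C^{n}$ holds exactly when $Z$ and $W$ are $\mathbb C$-linearly dependent. Hence any $W\subset\mathfrak p\otimes\mathbb C$ with $[W,W]=0$ consists of mutually proportional vectors, so $\dim_{\mathbb C}W\le 1$; this is the fact of Sampson \cite{Sam2}. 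Substituting $\max\{\dim_{\mathbb C}W\}=1$ into (\ref{y520}) gives $rank_{\mathbb R}(d\phi)\le 2$, and since the whole argument is pointwise this bound holds at every point of $M$.

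The proof has a single point of substance: the Lie-algebraic observation that abelian subspaces of $\mathfrak p\otimes\mathbb C$ for the rank-one group $SO(1,n)$ are at most one-dimensional. This is precisely where the constant-curvature hypothesis enters, playing the role that the general high-rank combinatorial bounds will play for the higher-rank targets treated in the subsequent theorems. Everything else is an immediate invocation of Proposition \ref{y52} together with the estimate (\ref{y520}) coming from the curvature identity (\ref{y514}), so I expect no genuine obstacle here.
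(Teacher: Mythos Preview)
Your argument is correct and follows exactly the route the paper takes: invoke Proposition~\ref{y52} to get the abelian image $W\subset\mathfrak p\otimes\mathbb C$, apply the rank bound (\ref{y520}), and then use the fact (attributed in the paper to \cite{Sam2}) that for $G=SO(1,n)$ every abelian subspace of $\mathfrak p\otimes\mathbb C$ has complex dimension at most $1$. Your explicit verification of this last point via $[Z,W]=ZW^{t}-WZ^{t}$ is a welcome elaboration of what the paper merely cites.
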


If $G/K$ is a Hermitian symmetric space, then corresponding to any invariant complex structure on $G/K$ we have the decomposition $$\mathfrak{p}\otimes \mathbb{C}=\mathfrak{p}^{1,0}\oplus\mathfrak{p}^{0,1},$$
and the integrability condition $[\mathfrak{p}^{1,0}, \mathfrak{p}^{1,0}]\subset \mathfrak{p}^{1,0}$ is equivalent, in view of $[\mathfrak{p},\mathfrak{p}]\subset\mathfrak{k}$, to $[\mathfrak{p}^{1,0}, \mathfrak{p}^{1,0}]=0$, thus $\mathfrak{p}^{1,0}$ is an abelian subalgebra of $\mathfrak{p}\otimes \mathbb{C}$.
\begin{lemma}(cf. \cite{CT})\label{y53}
Let $G/K$ be a symmetric space of non-compact type. Let $W\subset \mathfrak{p}\otimes\mathbb{C}$ be an abelian subspace. Then $dim W\leq\frac{1}{2}dim\mathfrak{p}\otimes\mathbb{C}$. Equality holds in this inequality if and only if $G/K$ is Hermitian symmetric and $W=\mathfrak{p}^{1,0}$ for any invariant complex structure on $G/K$.
\end{lemma}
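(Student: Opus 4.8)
\textbf{Proof proposal for Lemma \ref{y53}.}

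The plan is to reduce everything to the root-space structure of the symmetric pair $(\mathfrak{g},\mathfrak{k})$ and the fact that, because $[\mathfrak{p},\mathfrak{p}]\subset\mathfrak{k}$, an abelian subspace $W\subset\mathfrak{p}\otimes\mathbb{C}$ is exactly an \emph{isotropic} subspace for the bracket-valued pairing $(X,Y)\mapsto[X,Y]\in\mathfrak{k}\otimes\mathbb{C}$. First I would fix a maximal abelian subspace $\mathfrak{a}\subset\mathfrak{p}$ (a flat) and introduce the restricted root system $\Sigma$ of $(\mathfrak{g},\mathfrak{a})$; complexifying, one has $\mathfrak{p}\otimes\mathbb{C}=(\mathfrak{a}\otimes\mathbb{C})\oplus\bigoplus_{\lambda\in\Sigma^+}\mathfrak{p}_\lambda$, where $\mathfrak{p}_\lambda$ sits inside $\mathfrak{g}_\lambda\oplus\mathfrak{g}_{-\lambda}$. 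The key structural input is that $[\mathfrak{g}_\lambda,\mathfrak{g}_\mu]\subset\mathfrak{g}_{\lambda+\mu}$ and that bracketing the "$+\lambda$" part of $\mathfrak{p}_\lambda$ against the "$-\lambda$" part produces a nonzero element of $\mathfrak{a}$; this is the mechanism that forces any abelian $W$ to avoid containing too much.

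Next I would run the dimension count. Decompose a putative abelian $W$ according to the root grading and argue, root by root (and handling $\pm\lambda$ pairs together, plus the Cartan part $\mathfrak{a}\otimes\mathbb{C}$), that $W$ can contain at most "half" of each graded piece without creating a nonzero bracket; summing over the positive roots gives $\dim_{\mathbb{C}}W\le\frac12\dim_{\mathbb{C}}(\mathfrak{p}\otimes\mathbb{C})$. A cleaner packaging of the same idea: the complexified Killing form together with the Lie bracket endow $\mathfrak{p}\otimes\mathbb{C}$ with enough of a symplectic-type pairing (via $\langle[X,Y],H\rangle$ for suitable $H\in\mathfrak{a}$) that maximal abelian subspaces are Lagrangian-dimensional; I would cite \cite{CT} for the detailed bookkeeping rather than reproduce it.

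For the equality case, suppose $\dim_{\mathbb{C}}W=\frac12\dim_{\mathbb{C}}(\mathfrak{p}\otimes\mathbb{C})$. Then $W$ is a \emph{maximal} abelian subspace and, by the pairing above, $W$ is its own annihilator, so $\mathfrak{p}\otimes\mathbb{C}=W\oplus\overline{W}$ with $[W,W]=[\overline{W},\overline{W}]=0$. Setting $JX=\sqrt{-1}X$ on $W$ and $JX=-\sqrt{-1}X$ on $\overline{W}$ defines a $K$-invariant almost complex structure on $\mathfrak{p}$ whose integrability is precisely $[W,W]=0$; since $K$ acts transitively on $G/K$'s isotropy, this descends to a $G$-invariant complex structure, so $G/K$ is Hermitian symmetric and $W=\mathfrak{p}^{1,0}$. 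Conversely, when $G/K$ is Hermitian symmetric, $\mathfrak{p}^{1,0}$ is abelian of exactly half-dimension, as recalled just before the lemma. The main obstacle is the inequality's root-theoretic bookkeeping — ensuring the "at most half of each piece" bound is tight and that the contributions across different roots genuinely add up, with no slack — and for that I would lean on the argument in \cite{CT} rather than redo it from scratch.
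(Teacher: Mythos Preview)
The paper does not actually prove this lemma: it is quoted verbatim from Carlson--Toledo \cite{CT} with a ``cf.'' and no argument. So there is no in-paper proof to compare your proposal against; both you and the authors ultimately defer to \cite{CT}.

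That said, your outline of the equality case has a genuine gap. You write that setting $\tilde J=\sqrt{-1}$ on $W$ and $\tilde J=-\sqrt{-1}$ on $\overline W$ ``defines a $K$-invariant almost complex structure on $\mathfrak p$'', and then that ``since $K$ acts transitively on $G/K$'s isotropy, this descends to a $G$-invariant complex structure''. The $K$-invariance is exactly the nontrivial point, and you have not argued it: nothing in the hypotheses says $\mathrm{Ad}(K)$ preserves $W$, and without that the endomorphism $\tilde J$ you build is just a linear complex structure on the vector space $\mathfrak p$, not an invariant one on the homogeneous space. (The sentence about $K$ acting transitively on the isotropy is also garbled --- $K$ \emph{is} the isotropy group.) The actual argument in \cite{CT} proceeds differently: one shows that $\mathfrak q=(\mathfrak k\otimes\mathbb C)\oplus W$ is a parabolic subalgebra of $\mathfrak g\otimes\mathbb C$ containing $\mathfrak k\otimes\mathbb C$, and it is this parabolic that forces $G/K$ to be Hermitian symmetric with $W=\mathfrak p^{1,0}$. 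Relatedly, your claim $\mathfrak p\otimes\mathbb C=W\oplus\overline W$ needs justification: ``$W$ is its own annihilator'' does not by itself rule out $W\cap\overline W\neq 0$ (that intersection is the complexification of a real abelian subspace of $\mathfrak p$, and you have to use that $G/K$ is of noncompact type, hence has no Euclidean factor, to control it).

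Your sketch of the inequality via a root-by-root count and a ``symplectic-type pairing'' is heuristically in the right direction but vague; since you already say you would lean on \cite{CT} for the bookkeeping, that part is acceptable as a citation.
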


From (\ref{y520}) and Lemma \ref{y53}, we get immediately the following result.
\begin{corollary}
Let $\phi: M\rightarrow N$ be as in Proposition \ref{y52} and suppose that $N$ is not locally Hermitian symmetric. Then $rank d\phi<dim N$.
\end{corollary}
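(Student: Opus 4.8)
The plan is to combine the abelian-image property from Proposition \ref{y52} with the sharp dimension bound in Lemma \ref{y53}. By Proposition \ref{y52}, the map $\phi$ is CR pluriharmonic and, at every point $x\in M$, $d\phi_x$ carries $T_{1,0}M_x$ onto an abelian subspace $W\subset\mathfrak p\otimes\mathbb C$. By the rank estimate \eqref{y520}, we have
\[
rank_{\mathbb R}(d\phi)\le 2\max\{\dim_{\mathbb C}W\mid W\subset\mathfrak p\otimes\mathbb C,\ [W,W]=0\}.
\]
So the whole argument reduces to controlling the maximal dimension of an abelian subspace of $\mathfrak p\otimes\mathbb C$.

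First I would invoke Lemma \ref{y53}: for a symmetric space $G/K$ of noncompact type, any abelian subspace $W\subset\mathfrak p\otimes\mathbb C$ satisfies $\dim_{\mathbb C}W\le\tfrac12\dim_{\mathbb C}(\mathfrak p\otimes\mathbb C)=\tfrac12\dim_{\mathbb R}N$. Feeding this into \eqref{y520} already gives $rank_{\mathbb R}(d\phi)\le\dim N$, so the content of the corollary is the strictness of the inequality. Here I would use the equality clause of Lemma \ref{y53}: equality $\dim_{\mathbb C}W=\tfrac12\dim_{\mathbb C}(\mathfrak p\otimes\mathbb C)$ forces $G/K$ to be Hermitian symmetric (with $W=\mathfrak p^{1,0}$ for some invariant complex structure). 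Since by hypothesis $N$ is not locally Hermitian symmetric — equivalently $G/K$ is not Hermitian symmetric — equality is impossible, and every abelian $W$ has $\dim_{\mathbb C}W<\tfrac12\dim_{\mathbb R}N$ strictly.

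Then the conclusion is immediate: since $\mathfrak p\otimes\mathbb C$ is finite-dimensional, the maximum in \eqref{y520} is attained and is $<\tfrac12\dim_{\mathbb R}N$, hence $2\max\{\dim_{\mathbb C}W\}\le\dim N-1<\dim N$ (or more simply, it is an integer strictly less than $\dim N$ after doubling, using that $\dim_{\mathbb C}(\mathfrak p\otimes\mathbb C)=\dim N$), so $rank\,d\phi<\dim N$ at every point, and therefore $rank\,d\phi<\dim N$. I would only need to be mildly careful about whether the maximal abelian subspace is attained — but finite-dimensionality of $\mathfrak p\otimes\mathbb C$ settles this, since there are only finitely many possible dimensions and the set of abelian subspaces is nonempty (it contains, e.g., any line).

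The only genuine subtlety — and hence the main point to get right — is the logical use of the equality case of Lemma \ref{y53}: one must argue that failure of strictness in the rank bound would propagate back, through \eqref{y520} and Proposition \ref{y52}, to an abelian subspace realizing the extremal dimension, and then Lemma \ref{y53} would force $N$ to be locally Hermitian symmetric, contradicting the hypothesis. Everything else is a direct substitution, so there is no computational obstacle; the proof is essentially a two-line deduction from the cited results.
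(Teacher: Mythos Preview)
Your proposal is correct and follows exactly the same route as the paper: combine the rank estimate \eqref{y520} with Lemma \ref{y53}, using the equality clause of the latter to obtain strictness from the hypothesis that $N$ is not locally Hermitian symmetric. The paper's own proof is the one-line remark ``From \eqref{y520} and Lemma \ref{y53}, we get immediately the following result,'' and your write-up simply unpacks this deduction with a bit more care about attainment of the maximum.
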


The above corollary use only the case of strict inequality in Lemma \ref{y53}. We have treated the case of equality in such detail in order to obtain the following theorem.
\begin{theorem}
Let $(M,J,\theta)$ be a compact Sasakian manifold and $N$ a locally Hermitian symmetric space of noncompact type whose universal cover does not contain the hyperbolic plane as a factor. If $\phi: M\rightarrow N$ is either a harmonic map or a pseudoharmonic map, and there is a point $x\in M$ such that $d\phi(T_xM)=T_{\phi(x)}N$, then $\phi$ is CR holomorphic.
\end{theorem}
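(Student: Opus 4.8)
The plan is to run the Siu--Sampson argument in its CR form: exploit the algebra of abelian subspaces of $\mathfrak{p}\otimes\mathbb{C}$ at the point where $d\phi$ is onto to force $\phi$ to be CR holomorphic there, and then propagate this to all of $M$ by a unique continuation argument. Throughout, let $G/K$ be the universal cover of $N$ with Cartan decomposition $\mathfrak{g}=\mathfrak{k}+\mathfrak{p}$, so that $T_{\phi(y)}N\cong\mathfrak{p}$ for every $y\in M$.

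First, the set-up. A locally Hermitian symmetric space of noncompact type is in particular a locally symmetric space of noncompact type, so Proposition \ref{y52} applies: $\phi$ is CR pluriharmonic, and for every $y\in M$ the differential $d\phi_y$ carries $T_{1,0}M_y$ onto an abelian subspace $W_y\subset\mathfrak{p}\otimes\mathbb{C}$. By Proposition \ref{bap3}(i), $\phi$ is basic, so $d\phi(T)=0$; hence the hypothesis $d\phi(T_xM)=T_{\phi(x)}N$ says precisely that $d\phi_x$ maps $H(M)_x$ onto $T_{\phi(x)}N$.

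Next, I would show that $\phi$ is CR holomorphic at $x$. Complexifying $d\phi_x$ on $H(M)_x\otimes\mathbb{C}=T_{1,0}M_x\oplus T_{0,1}M_x$ and using $d\phi_x(T_{0,1}M_x)=\overline{W_x}$, one gets $W_x+\overline{W_x}=\mathfrak{p}\otimes\mathbb{C}$, so $2\,dim_{\mathbb{C}}W_x\ge dim_{\mathbb{C}}(\mathfrak{p}\otimes\mathbb{C})$. Since $W_x$ is abelian, Lemma \ref{y53} gives $dim_{\mathbb{C}}W_x\le\frac{1}{2}dim_{\mathbb{C}}(\mathfrak{p}\otimes\mathbb{C})$, so equality holds throughout; in particular $\mathfrak{p}\otimes\mathbb{C}=W_x\oplus\overline{W_x}$. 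By the equality case of Lemma \ref{y53} --- and this is exactly where the assumption that $G/K$ has no hyperbolic plane factor is used, since otherwise a maximal abelian subspace need not be of this form --- we conclude that $W_x=\mathfrak{p}^{1,0}$ for some invariant complex structure $J'$ of $G/K$. Thus $d\phi_x(T_{1,0}M_x)=T^{1,0}_{\phi(x)}N$, i.e. $d\phi_x\circ J=J'\circ d\phi_x$, so $\phi$ is CR holomorphic at $x$ with respect to $J'$.

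Finally, I would spread this to an open set and propagate. By (\ref{y520}) the real rank of $d\phi$ is at most $dim N$, and it equals $dim N$ at $x$, so $U=\{y\in M:rank_{\mathbb{R}}d\phi_y=dim N\}$ is open and nonempty. Repeating the argument of the previous paragraph at each $y\in U$ shows $W_y=\mathfrak{p}^{1,0}$ for one of the finitely many invariant complex structures of $G/K$; by continuity this structure is locally constant on $U$, hence equals $J'$ on the connected component $U_0\ni x$, so $\phi$ is CR holomorphic on the nonempty open set $U_0$. To pass from $U_0$ to $M$, the obstacle is that on $M$ the tensor $d\phi\circ J-J'\circ d\phi$ is governed by the merely subelliptic sublaplacian, for which unique continuation is delicate; the remedy is to pass to the Kähler cone $C(M)=\mathbb{R}^+\times M$. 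The basic CR pluriharmonic map $\phi$ extends radially to a pluriharmonic map $\Phi:C(M)\to N$ which is holomorphic on the open set $\mathbb{R}^+\times U_0$, and on the Kähler manifold $C(M)$ with its locally Hermitian symmetric target the $(0,1)$-part of $d\Phi$ satisfies a second order elliptic system (in the spirit of Sampson), so its vanishing on an open set forces it to vanish on the connected manifold $C(M)$; hence $\Phi$ is holomorphic and $\phi$ is CR holomorphic on all of $M$. The main obstacle is precisely this last step: establishing the behaviour of the conic extension of a basic CR pluriharmonic (resp. CR holomorphic) map and the unique continuation for the $(0,1)$-part of $d\Phi$; the algebraic core in the earlier steps is routine once the surjectivity hypothesis is available.
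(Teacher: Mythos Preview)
Your proposal is correct and follows essentially the same line as the paper: apply Proposition~\ref{y52} to obtain CR pluriharmonicity and the abelian-subspace image, use the equality case of Lemma~\ref{y53} at the surjective point to identify $W_x=\mathfrak{p}^{1,0}$, spread to an open set by openness of maximal rank, and then propagate CR holomorphicity to all of $M$ via the K\"ahler cone. The only cosmetic difference is that the paper packages the final step as a ready-made unique continuation result (Proposition~\ref{sst6}), whose proof is precisely your cone argument combined with Siu's unique continuation for holomorphicity of harmonic maps (Lemma~\ref{y529}); you sketch this in Sampson's language, but the content is the same.
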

\begin{proof}
Since $d\phi(T_{1,0}M)$ is an abelian subspace of half the dimension, it must be $\mathfrak{p}^{1,0}$ for an invariant complex structure on $N$, i.e., $d\phi_x(T_{1,0}M_x)=\mathfrak{p}^{1,0}$. Consequently this property must hold on a neighborhood $U$ of $x$. By Proposition \ref{y52} and Proposition \ref{bap3}, we have $d\phi(T)=0$. Therefore, the map $\phi$ is CR holomorphic on $U$. We get that the map $\phi$ is CR holomorphic on $M$ by the following unique continuation Proposition \ref{sst6}.
\end{proof}

Now, we will give some fundamental knowledge about the warped product. Let $(B,g_B)$ and $(S,g_S)$ be two Riemannian manifolds and $f$ be a positive smooth function on $B$. Consider the product manifold $B\times S$ with its natural projections $\pi_B:B\times S\rightarrow B$ and $\pi_S:B\times S\rightarrow S$. The warped product $B\times_fS$ is the manifold $B\times S$ furnished with the following Riemannian metric
\be
\tilde{g}=\pi_B^*(g_B)+(f\circ\pi_B)^2\pi_S^*(g_S).
\ee
The Levi-Civita connection of $N=B\times_fS$ can now be related to those of $B$ and $S$ as follows.
\begin{lemma}(cf. \cite[p.~206]{O'N})\label{ssl4}
Let $\tilde{\nabla}$, $^B\nabla$ and $^S\nabla$ be the Levi-Civita connections on $N$, $B$ and $S$ respectively. If $X$, $Y$ are vector fields on $S$ and $V$,$W$ are vector fields on $B$, the lift of $X,Y,V,W$ to $B\times_fS$ is also denoted by the same notations, then\\
(i) $\widetilde{\nabla}_V W$ is the lift of $^B\nabla_V W$\\
(ii) $\w{\nabla}_VX=\w{\nabla}_XV=\f{Vf}{f}X$;\\
(iii) $(\w{\nabla}_XY)_B=-(\tilde{g}(X,Y)/f) grad f$;\\
(iv) $(\w{\nabla}_XY)_S$ is the lift of $^S\nabla_XY$ on $S$.
\end{lemma}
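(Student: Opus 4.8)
The plan is to derive all four identities from the Koszul formula for the Levi-Civita connection $\w{\nabla}$ of $(N,\tilde g)=B\times_f S$,
\be
2\tilde g(\w{\nabla}_A C, E)=A\tilde g(C,E)+C\tilde g(A,E)-E\tilde g(A,C)+\tilde g([A,C],E)-\tilde g([A,E],C)-\tilde g([C,E],A),
\ee
applied to horizontal lifts (of vector fields on $B$) and vertical lifts (of vector fields on $S$). The three structural facts I would use throughout are: (a) the splitting $TN=\pi_B^{*}TB\oplus\pi_S^{*}TS$ is $\tilde g$-orthogonal, so $\tilde g(V,X)=0$ whenever $V$ is a lift from $B$ and $X$ a lift from $S$; (b) the Lie bracket of a lift from $B$ with a lift from $S$ vanishes, while the bracket of two lifts from $B$ (resp.\ $S$) is again a lift from $B$ (resp.\ $S$); (c) for lifts, $\tilde g(V,W)=g_B(V,W)$ is a function pulled back from $B$ and $\tilde g(X,Y)=f^{2}g_S(X,Y)$, so a lift from $B$ annihilates every function pulled back from $S$, a lift from $S$ annihilates $f$, and $Vf=\tilde g(\mathrm{grad}\,f,V)$ for the lift of $\mathrm{grad}^{B}f$.

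I would then handle the four cases in order. For (i), take $A=V$, $C=W$ lifts from $B$: testing against a vertical $E$, every term of the Koszul formula vanishes by (a) and (b), so $\w{\nabla}_V W$ is horizontal; testing against a horizontal $E$, facts (b) and (c) collapse the formula to the Koszul formula of ${}^{B}\nabla$, so $\w{\nabla}_V W$ is the lift of ${}^{B}\nabla_V W$. For (ii), take $A=V$ from $B$ and $C=X$ from $S$; since $[V,X]=0$, torsion-freeness already gives $\w{\nabla}_V X=\w{\nabla}_X V$. Testing against a horizontal $E$ all six terms vanish, so $\w{\nabla}_V X$ is vertical; testing against a vertical $Y$ the only surviving term is $V\tilde g(X,Y)=2f(Vf)g_S(X,Y)=2\frac{Vf}{f}\tilde g(X,Y)$, whence $\w{\nabla}_V X=\frac{Vf}{f}X$. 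For (iii) and (iv), take $A=X$, $C=Y$ lifts from $S$: testing against a horizontal $V$ leaves only $-V\tilde g(X,Y)=-2\frac{\tilde g(X,Y)}{f}\tilde g(\mathrm{grad}\,f,V)$, giving $(\w{\nabla}_X Y)_B=-(\tilde g(X,Y)/f)\,\mathrm{grad}\,f$; testing against a vertical $Z$, the factor $f^{2}$ comes out and the remaining combination of Koszul terms is precisely $2g_S({}^{S}\nabla_X Y,Z)$, giving $(\w{\nabla}_X Y)_S={}^{S}\nabla_X Y$.

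There is no real obstacle here: once the vanishing patterns recorded in (a)--(c) are in place, each of the four identities is a one-line specialization of the Koszul formula. The only points requiring a little care are keeping track of which of the six Koszul terms survive in each case and rewriting $Vf$ as $\tilde g(\mathrm{grad}\,f,V)$ when packaging part (iii). Since this is exactly O'Neill's warped-product formula, in the final text one may simply refer to \cite[p.~206]{O'N}; the sketch above is recorded only to indicate the mechanism.
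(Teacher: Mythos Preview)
Your proposal is correct and is precisely the standard Koszul-formula derivation of O'Neill's warped-product connection identities. The paper itself does not supply a proof of this lemma at all; it merely quotes the result from \cite[p.~206]{O'N}, so your sketch (which is essentially O'Neill's own argument) is entirely in line with what the authors rely on.
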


Now we consider the special case: let $(M,\theta,J)$ be a strictly pseudoconvex CR manifold and $C(M)$ be the manifold $\mathbb{R}^+\times_r M$ endowed with the metric $\tilde{g}=dr^2+\frac{r^2}{4}g_{\theta}$. Therefore, by Lemma \ref{ssl4}, we have
\be\label{y524}
\w{\nabla}_{\f{\p}{\p r}}\f{\p}{\p r}=0, \quad \w{\nabla}_{\f{\p}{\p r}} X=\w{\nabla}_X\f{\p}{\p r}=\f{1}{r}X, \quad \w{\nabla}_XY=\nabla^{\theta}_XY-\frac{1}{4}g_{\theta}(X,Y)r\f{\p}{\p r}.
\ee
\begin{proposition}(cf. \cite{BG})
If $(M,J,\theta)$ is a Sasakian manifold, then $(C(M),\tilde{g})$ is K\"{a}hler.
\end{proposition}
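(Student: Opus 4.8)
The plan is to produce explicitly an integrable almost complex structure $\w{J}$ on $C(M)=\mathbb{R}^{+}\times M$ which is compatible with $\w{g}=dr^{2}+\f{r^{2}}{4}g_{\theta}$ and $\w{\nabla}$-parallel, and then invoke the fact that a Hermitian manifold with $\w{\nabla}\w{J}=0$ is K\"ahler. Since $M$ is strictly pseudoconvex we have the $\w{g}$-orthogonal splitting $TC(M)=\mathbb{R}\f{\p}{\p r}\oplus\mathbb{R}T\oplus H(M)$, and I would set
\[
\w{J}\,\f{\p}{\p r}=\f{2}{r}\,T,\qquad \w{J}\,T=-\f{r}{2}\,\f{\p}{\p r},\qquad \w{J}\big|_{H(M)}=J .
\]
Using $g_{\theta}(T,T)=1$, $T\perp H(M)$ and $g_{\theta}(JX,JY)=g_{\theta}(X,Y)$ on $H(M)$, a direct check gives $\w{J}^{2}=-\mathrm{id}$ and $\w{g}(\w{J}U,\w{J}V)=\w{g}(U,V)$, so $(C(M),\w{g},\w{J})$ is almost Hermitian; one can also note that its fundamental two-form is $-\f14 d(r^{2}\theta)$, hence exact, but I would not need this if I prove parallelism directly.

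Next I would compute all the covariant derivatives $\w{\nabla}_{U}V$ from \eqref{y524}, feeding in the Sasakian hypothesis $\tau=0$, which by \eqref{pr4}--\eqref{pr5} forces $A\equiv 0$, $\nabla^{\theta}_{X}T=\f12 JX$ and $\nabla^{\theta}_{T}X=\nabla_{T}X+\f12 JX$ for $X\in H(M)$, together with $\nabla^{\theta}_{T}T=0$, $\nabla^{\theta}_{X}Y=\nabla_{X}Y+\f12\Omega(X,Y)T$ for $X,Y\in H(M)$, and the structural facts $\nabla T=0$, $\nabla J=0$ for the Tanaka--Webster connection. This yields, for $X,Y\in H(M)$,
\[
\w{\nabla}_{\f{\p}{\p r}}\f{\p}{\p r}=0,\quad \w{\nabla}_{\f{\p}{\p r}}T=\w{\nabla}_{T}\f{\p}{\p r}=\f1r T,\quad \w{\nabla}_{\f{\p}{\p r}}X=\w{\nabla}_{X}\f{\p}{\p r}=\f1r X,\quad \w{\nabla}_{T}T=-\f r4\f{\p}{\p r},
\]
\[
\w{\nabla}_{T}X=\nabla_{T}X+\tfrac12 JX,\qquad \w{\nabla}_{X}T=\tfrac12 JX,\qquad \w{\nabla}_{X}Y=\nabla_{X}Y+\tfrac12\Omega(X,Y)T-\tfrac r4 g_{\theta}(X,Y)\f{\p}{\p r}.
\]

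Finally I would verify $(\w{\nabla}_{U}\w{J})V=\w{\nabla}_{U}(\w{J}V)-\w{J}(\w{\nabla}_{U}V)=0$ for $U,V$ running over the three blocks $\f{\p}{\p r}$, $T$, $H(M)$. The cases with a radial or Reeb argument are immediate once the scaling factors $\f2r$ and $-\f r2$ are tracked (e.g. $\w{\nabla}_{\f{\p}{\p r}}(\f2r T)=-\f{2}{r^{2}}T+\f2r\cdot\f1r T=0=\w{J}(\w{\nabla}_{\f{\p}{\p r}}\f{\p}{\p r})$, and similarly for the others). The only genuinely structural case is $U,V\in H(M)$: here $\w{\nabla}_{X}(JY)=J\nabla_{X}Y-\f12 g_{\theta}(X,Y)T-\f r4\Omega(X,Y)\f{\p}{\p r}$, using $\nabla J=0$ and $\Omega(X,JY)=-g_{\theta}(X,Y)$, while $\w{J}(\w{\nabla}_{X}Y)=J\nabla_{X}Y-\f r4\Omega(X,Y)\f{\p}{\p r}-\f12 g_{\theta}(X,Y)T$, and these agree. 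Hence $\w{\nabla}\w{J}=0$, so $\w{J}$ is integrable and parallel, and $(C(M),\w{g})$ is K\"ahler. The main obstacle is entirely bookkeeping --- choosing the normalization of $\w{J}$ so that $\w{g}$-compatibility holds, and carrying the constants correctly through \eqref{y524} and \eqref{pr4}; there is no analytic difficulty, since integrability of $\w{J}$ comes for free once $\w{\nabla}\w{J}=0$ is established.
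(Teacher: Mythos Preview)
Your proof is correct and follows essentially the same approach as the paper: the almost complex structure you write down coincides with the paper's (written there via $\zeta=\tfrac{r}{2}\tfrac{\p}{\p r}$ as $\tilde{J}Y=JY-\theta(Y)\zeta$, $\tilde{J}\zeta=T$), and both arguments conclude by verifying $\w{\nabla}\w{J}=0$ from \eqref{y524} and \eqref{pr4}. The paper's proof is only a sketch, and you have carried out precisely the case-by-case verification it omits; your side observation that the fundamental form is $-\tfrac14\,d(r^{2}\theta)$ is an extra, but the paper does not use it either.
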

\begin{proof}
Set $\zeta=\frac{r}{2}\frac{\p}{\p r}$ and define smooth section of End$TC(M)$ by the formula
\be\label{y525}
\tilde{J}Y=JY-\theta(Y)\zeta, \quad \tilde{J}\zeta=T.
\ee
It is easy to see that $\tilde{J}$ is an almost complex structure on $C(M)$ and the metric $\tilde{g}$ is Hermitian. From (\ref{y524}) and (\ref{y525}) we can show that $\w{\nabla}\tilde{J}=0$. Thus $C(M)$ is K\"{a}hler.
\end{proof}

By (\ref{pr4}), (\ref{y524}) and (\ref{y525}), we can derive the following Lemmas \ref{y526}, \ref{lem4} and \ref{y528}.
\begin{lemma}\label{y526}
Let $(M^{2m+1},J,\theta)$ be a Sasakian manifold, $(C(M),\tilde{g})$ its cone manifold, $(N^n,h)$ a Riemannian manifold. If $\phi:M\rightarrow N$ is a harmonic map, then the conic extension $\tilde{\phi}:C(M)\rightarrow N$ defined by
\be \label{y527}
\tilde{\phi}(x,r)=\phi(x)
\ee
is also harmonic.
\end{lemma}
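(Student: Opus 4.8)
The plan is to compute the tension field $\tau(\tilde\phi)=trace_{\tilde g}\tilde\beta$ of the conic extension directly in an orthonormal frame for $C(M)$ adapted to the cone structure, and to recognize it as a positive multiple of (the pullback of) the tension field $\tau^{\theta}(\phi)$, which vanishes by assumption. First I would fix a local orthonormal CR frame $\{e_A\}_{A=0}^{2m}$ on $M$ with $e_0=T$ and lift each $e_A$ to $C(M)$ so that $[\frac{\partial}{\partial r},e_A]=0$. Since $\tilde g\big(\frac{2}{r}e_A,\frac{2}{r}e_A\big)=\frac{4}{r^{2}}\cdot\frac{r^{2}}{4}g_{\theta}(e_A,e_A)=1$, the collection $\big\{\frac{\partial}{\partial r},\frac{2}{r}e_0,\dots,\frac{2}{r}e_{2m}\big\}$ is a local orthonormal frame for $\tilde g=dr^{2}+\frac{r^{2}}{4}g_{\theta}$.

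Next I would evaluate the two types of contributions to $\tau(\tilde\phi)$. Because $\tilde\phi(x,r)=\phi(x)$ is independent of $r$ one has $d\tilde\phi\big(\frac{\partial}{\partial r}\big)=0$, and together with $\w{\nabla}_{\frac{\partial}{\partial r}}\frac{\partial}{\partial r}=0$ from (\ref{y524}) the $\frac{\partial}{\partial r}$–term of the trace vanishes. For the remaining frame vectors, $d\tilde\phi\big(\frac{2}{r}e_A\big)=\frac{2}{r}d\phi(e_A)$; along $e_A$, which is tangent to the slices $M\times\{r\}$, the pullback connection $\tilde\phi^{-1}\nabla^{h}$ restricts to $\phi^{-1}\nabla^{h}$; and $e_A\big(\frac{2}{r}\big)=0$. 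Hence
\be\nonumber
\nabla^{h}_{\frac{2}{r}e_A}\Big(d\tilde\phi\big(\tfrac{2}{r}e_A\big)\Big)=\frac{4}{r^{2}}\,\nabla^{h}_{e_A}\big(d\phi(e_A)\big),
\ee
while, using again $e_A\big(\frac{2}{r}\big)=0$, $g_{\theta}(e_A,e_A)=1$, and the last identity in (\ref{y524}),
\be\nonumber
\w{\nabla}_{\frac{2}{r}e_A}\Big(\tfrac{2}{r}e_A\Big)=\frac{4}{r^{2}}\,\w{\nabla}_{e_A}e_A=\frac{4}{r^{2}}\Big(\nabla^{\theta}_{e_A}e_A-\frac{r}{4}\frac{\partial}{\partial r}\Big),
\ee
so that $d\tilde\phi\big(\w{\nabla}_{\frac{2}{r}e_A}\tfrac{2}{r}e_A\big)=\frac{4}{r^{2}}d\phi\big(\nabla^{\theta}_{e_A}e_A\big)$ after one further use of $d\tilde\phi\big(\frac{\partial}{\partial r}\big)=0$.

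Summing over $A$, I would then conclude
\be\nonumber
\tau(\tilde\phi)=\frac{4}{r^{2}}\sum_{A=0}^{2m}\Big[\nabla^{h}_{e_A}\big(d\phi(e_A)\big)-d\phi\big(\nabla^{\theta}_{e_A}e_A\big)\Big]=\frac{4}{r^{2}}\,trace_{g_{\theta}}B=\frac{4}{r^{2}}\,\tau^{\theta}(\phi)=0,
\ee
since $\phi$ is harmonic. The argument is essentially bookkeeping; I expect the only delicate points to be the normalization of the frame by the factor $\frac{2}{r}$ together with the observation that this scalar is annihilated by every $e_A$, and the care needed to keep track of which connection ($\w{\nabla}$ on $C(M)$, $\nabla^{\theta}$ on $M$, or the pullback $\nabla^{h}$) is acting at each step. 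No genuine analytic or geometric obstruction arises; in fact the Sasakian hypothesis is not used in this lemma — it enters the surrounding discussion only to make $(C(M),\tilde g)$ K\"ahler — but I would retain it in the statement for uniformity with the rest of the section.
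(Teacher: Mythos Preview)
Your proof is correct and follows the approach the paper indicates: the authors simply write ``By (\ref{pr4}), (\ref{y524}) and (\ref{y525}), we can derive the following Lemmas \ref{y526}, \ref{lem4} and \ref{y528}'' without giving details, and your computation is exactly the intended filling-in of (\ref{y524}) for this lemma. Your observation that only (\ref{y524}) is needed here (and not the Sasakian hypothesis, which enters only for Lemmas \ref{lem4} and \ref{y528} via the K\"ahler structure (\ref{y525})) is also accurate.
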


\begin{lemma}\label{lem4}
Let $(M^{2m+1},J,\theta)$ be a Sasakian manifold, $(C(M),\tilde{g})$ its cone manifold, $(N,h)$ a Riemannian manifold. If $\phi:M\rightarrow N$ is a CR pluriharmonic map, then the conic extension $\tilde{\phi}$ is a pluriharmonic map.
\end{lemma}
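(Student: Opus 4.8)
The plan is to compute the second fundamental form $\widetilde{B}$ of $\widetilde\phi:C(M)\to N$ in terms of the data of $\phi$ on $M$, using the warped-product connection formulas in \eqref{y524} together with the relation \eqref{pr4} between $\nabla^\theta$ and the Tanaka--Webster connection $\nabla$. Write $\zeta=\frac{r}{2}\frac{\p}{\p r}$; since $\widetilde\phi$ is constant along the $r$-direction we have $d\widetilde\phi(\frac{\p}{\p r})=0$, and $d\widetilde\phi(X)=d\phi(X)$ for $X\in TM$ lifted to $C(M)$. The definition of the Kähler structure \eqref{y525} shows that the complexified tangent space of $C(M)$ splits, and a vector of type $(1,0)$ on $C(M)$ is (up to scaling) of the form $Z - \sqrt{-1}\,\theta(Z)\zeta$ for $Z\in T_{1,0}M$, together with $\frac{\p}{\p r}-\sqrt{-1}T$ from the cone direction. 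So to prove $\widetilde\phi$ is pluriharmonic it suffices to check $\widetilde B(U,V)+\widetilde B(\widetilde JU,\widetilde JV)=0$ on the three types of pairs: (a) both $U,V$ horizontal, i.e. in $H(M)$; (b) one horizontal and one in the $\{\frac{\p}{\p r},T\}$-plane; (c) both in the $\{\frac{\p}{\p r},T\}$-plane.

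For case (a), with $X,Y\in H(M)$, the cone connection gives $\widetilde\nabla_X Y = \nabla^\theta_X Y - \frac14 g_\theta(X,Y)\,r\frac{\p}{\p r}$, and since $d\widetilde\phi$ kills the $\frac{\p}{\p r}$ component, $\widetilde B(X,Y) = \nabla^h_Y(d\phi(X)) - d\phi(\nabla^\theta_X Y) = B(X,Y)$, the second fundamental form of $\phi$ restricted to $H(M)$. By \eqref{y212}, $B(X,Y)+B(JX,JY) = \beta(X,Y)+\beta(JX,JY) - \Omega(Y,X)d\phi(T)$, which vanishes because $\phi$ is CR pluriharmonic (so $\beta(X,Y)+\beta(JX,JY)=0$) and basic (so $d\phi(T)=0$, by Proposition~\ref{bap3}(i)); here one also uses $\widetilde J X = JX$ for $X\in H(M)$ from \eqref{y525}. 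For case (b), take $X\in H(M)$ and pair it with $\zeta$ (equivalently $\frac{\p}{\p r}$); from \eqref{y524}, $\widetilde\nabla_X \frac{\p}{\p r} = \frac1r X$, so $\widetilde B(\frac{\p}{\p r},X) = \nabla^h_X(d\widetilde\phi(\frac{\p}{\p r})) - d\widetilde\phi(\frac1r X) = -\frac1r d\phi(X)$. On the other side $\widetilde J\frac{\p}{\p r}$ is proportional to $T$ and $\widetilde J X = JX$, so $\widetilde B(\widetilde J\frac{\p}{\p r}, \widetilde JX)$ involves $\widetilde B(T,JX)$; using $\widetilde\nabla_{JX}T = \nabla^\theta_{JX}T = \frac12 J(JX) = -\frac12 X$ (by \eqref{pr4} with $\tau=0$ since $M$ is Sasakian) and $d\widetilde\phi(T)=d\phi(T)=0$, one gets $\widetilde B(T,JX) = -d\phi(\nabla^\theta_{JX}T) = \frac12 d\phi(X)$; after tracking the constant factors from $\zeta=\frac r2\frac{\p}{\p r}$, the two contributions cancel. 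Case (c) is handled similarly: on the $\{\frac{\p}{\p r},T\}$-plane the connection formulas \eqref{y524} and $\widetilde\nabla_T T = \nabla^\theta_T T - \frac14 r\frac{\p}{\p r} = -\frac14 r\frac{\p}{\p r}$, $\widetilde\nabla_{\p/\p r}T=\frac1r T$, combined with $d\widetilde\phi(\frac{\p}{\p r})=0$ and $d\widetilde\phi(T)=0$, force $\widetilde B$ to vanish identically on this plane, so the pluriharmonicity identity holds trivially there.

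The main obstacle is purely bookkeeping: getting the normalization constants right. Because the cone metric is $dr^2+\frac{r^2}{4}g_\theta$ rather than $dr^2+r^2 g_\theta$, the natural $(1,0)$-vectors on $C(M)$ are not simply $Z\mp\sqrt{-1}\zeta$ but carry $r$-dependent and factor-of-$2$ weights, and $\widetilde B$ is only required to vanish after symmetrizing over $\widetilde J$, so one must be careful that the cross terms in case (b) — the only place where a genuine cancellation (rather than a trivial vanishing) occurs — really do cancel with the correct signs. Once the identification of $T^{1,0}C(M)$ is fixed and the three cases above are verified, bilinearity and the reality structure give $\widetilde B(U,V)+\widetilde B(\widetilde JU,\widetilde JV)=0$ for all $U,V\in TC(M)$, which is exactly the statement that $\widetilde\phi$ is pluriharmonic in the sense of Remark~2.9(3).
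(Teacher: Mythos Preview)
Your proposal is correct and follows exactly the route the paper indicates: the paper does not write out a proof of this lemma but simply states that it (together with Lemmas~\ref{y526} and~\ref{y528}) can be derived from \eqref{pr4}, \eqref{y524}, and \eqref{y525}, which are precisely the ingredients you use in your case-by-case computation of $\widetilde B(U,V)+\widetilde B(\widetilde J U,\widetilde J V)$. Your verification that the cross-term in case~(b) cancels (using $\nabla^\theta_{JX}T=-\tfrac12 X$ in the Sasakian case and $d\phi(T)=0$ from Proposition~\ref{bap3}(i)) is the only nontrivial step, and you handle it correctly; the remaining bookkeeping caution you flag about the factor $r/2$ in $\zeta$ is warranted but causes no trouble once tracked.
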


\begin{lemma}\label{y528}
Let $\phi:(M,J,\theta)\rightarrow (N,h,J')$ be a smooth map from a Sasakian manifold to a K\"{a}hler manifold, $(C(M),\tilde{g})$ the cone manifold of $M$, the conic extension of $\phi$ is defined by (\ref{y527}). Then $\phi$ is a CR holomorphic (resp. CR anti-holomorphic) map if and only if $\tilde{\phi}$ is holomorphic (resp. anti-holomorphic).
\end{lemma}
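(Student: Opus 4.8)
The plan is to relate the Levi-Civita connection $\widetilde{\nabla}$ of the cone $C(M)$ to the Tanaka-Webster connection $\nabla$ of $M$ via the warped-product formulas \eqref{y524}, and to relate the almost complex structure $\widetilde{J}$ on $C(M)$ to $J$ on $M$ via \eqref{y525}; then the condition $d\widetilde\phi\circ\widetilde J=J'\circ d\widetilde\phi$ should decompose, upon testing against $\frac{\partial}{\partial r}$, against $T$, and against $X\in H(M)$, into the condition $d\phi\circ J=J'\circ d\phi$ on $H(M)$ together with the basicity condition $d\phi(T)=0$, and conversely. First I would record that $\widetilde\phi$ defined by \eqref{y527} satisfies $d\widetilde\phi(\frac{\partial}{\partial r})=0$ and $d\widetilde\phi(X)=d\phi(X)$ for $X\in TM$ (the lift of $X$). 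Hence for $Y\in TM$, using \eqref{y525}, $d\widetilde\phi(\widetilde J Y)=d\widetilde\phi(JY-\theta(Y)\zeta)=d\phi(JY)$ since $d\widetilde\phi(\zeta)=\tfrac r2 d\widetilde\phi(\frac{\partial}{\partial r})=0$, and also $d\widetilde\phi(\widetilde J\zeta)=d\widetilde\phi(T)=d\phi(T)$ while $J'd\widetilde\phi(\zeta)=0$. Thus $d\widetilde\phi\circ\widetilde J=J'\circ d\widetilde\phi$ on $C(M)$ forces, evaluating on $\zeta$, that $d\phi(T)=0$, i.e. $\phi$ is basic; and evaluating on $Y\in H(M)$ gives $d\phi(JY)=J'd\phi(Y)$, i.e. $\phi$ is CR holomorphic. (Evaluation on $\frac{\partial}{\partial r}$ is automatic since both sides vanish, using $\widetilde J\frac{\partial}{\partial r}=\tfrac2r\widetilde J\zeta=\tfrac2r T$ and $d\widetilde\phi(T)=d\phi(T)=0$.) Conversely, if $\phi$ is CR holomorphic then by Proposition \ref{bap4} it is CR pluriharmonic, hence by Proposition \ref{bap3}(i) it is basic, so $d\phi(T)=0$; running the same three computations backwards shows $d\widetilde\phi\circ\widetilde J=J'\circ d\widetilde\phi$, i.e. $\widetilde\phi$ is holomorphic. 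Indeed, as $\{\frac{\partial}{\partial r},T\}$ together with $H(M)$ spans $TC(M)$ and $\widetilde J$ preserves each of the relevant pieces appropriately, checking the identity on this spanning set suffices.

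The anti-holomorphic case is identical with $J'$ replaced by $-J'$ throughout: $d\phi\circ J=-J'\circ d\phi$ on $H(M)$ plus $d\phi(T)=0$ is equivalent to $d\widetilde\phi\circ\widetilde J=-J'\circ d\widetilde\phi$, and one invokes Proposition \ref{bap4} (anti-holomorphic part) and Proposition \ref{bap3}(i) for the reverse implication. I would present both cases together, noting the sign change.

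The main obstacle, such as it is, is purely bookkeeping: one must be careful that $C(M)$-vector fields of the two types ($\frac{\partial}{\partial r}$-direction versus lifts from $M$) are handled consistently, and in particular that $d\widetilde\phi$ annihilates the radial direction so that $\zeta$ contributes nothing — the only real content is the algebraic identity $d\widetilde\phi(\widetilde JY)=d\phi(JY)$ and the fact that $d\widetilde\phi(\widetilde J\zeta)=d\phi(T)$ must vanish, which is exactly where basicity enters and is supplied on the reverse direction by Propositions \ref{bap4} and \ref{bap3}. No differentiation of the connection is actually needed for this particular lemma (unlike Lemmas \ref{y526} and \ref{lem4}, where the harmonic/pluriharmonic equations require \eqref{y524}); only the pointwise relation \eqref{y525} between $\widetilde J$ and $J$ is used.
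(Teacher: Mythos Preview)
Your proposal is correct and follows essentially the same approach as the paper: the paper simply asserts that Lemmas \ref{y526}, \ref{lem4} and \ref{y528} follow from \eqref{pr4}, \eqref{y524} and \eqref{y525}, and your argument is precisely the spelled-out verification using \eqref{y525} and the definition \eqref{y527}. Your observation that only the pointwise relation \eqref{y525} (and not the connection formulas \eqref{pr4}, \eqref{y524}) is needed for this particular lemma is accurate and a worthwhile clarification; note also that the basicity $d\phi(T)=0$ in the converse direction can be read off directly from the definition of CR holomorphicity (since $JT=0$ forces $J'd\phi(T)=0$), so the appeal to Propositions \ref{bap4} and \ref{bap3} is permissible but not strictly necessary.
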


In \cite{Siu1}, Siu derived the following unique continuation theorem for holomorphicity.
\begin{lemma}(cf. \cite{Siu1})\label{y529}
Suppose $M,N$ are two K\"{a}hler manifolds and $\phi:M\rightarrow N$ is a harmonic map. Let $U$ be a nonempty open subset of $M$. If $\phi$ is holomorphic (resp. anti-holomorphic) on $U$, then $\phi$ is holomorphic (resp. anti-holomorphic) on $M$.
\end{lemma}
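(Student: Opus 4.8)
The plan is to deduce the statement from the classical unique continuation theorem of Aronszajn for second order elliptic systems; this is in essence Siu's original argument. We may assume $M$ is connected, replacing it by the connected component containing $U$ if necessary. We treat the holomorphic case in detail; the anti-holomorphic case follows verbatim after interchanging the roles of the $(1,0)$- and $(0,1)$-parts of $d\phi$ (equivalently, after replacing the complex structure of $N$ by its conjugate).

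First I would localize. Fix an arbitrary point $p\in M$, choose holomorphic coordinates $z=(z^1,\dots,z^m)$ on a coordinate ball $B\ni p$, and holomorphic coordinates $w=(w^1,\dots,w^n)$ on a chart of $N$ containing $\phi(\bar B)$, shrinking $B$ if necessary. Writing $\phi^i=w^i\circ\phi$ and $u^i_{\bar\alpha}=\partial\phi^i/\partial\bar z^\alpha$, the vector-valued function $u=(u^i_{\bar\alpha})$ measures the failure of holomorphicity: $\phi$ is holomorphic on an open set exactly where $u$ vanishes there.

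The analytic core is the following claim: on $B$ the function $u$ satisfies a differential inequality of Aronszajn type
\[
\big|\Delta u\big|\ \le\ C\big(|u|+|\nabla u|\big),
\]
where $\Delta=g^{\alpha\bar\beta}\partial_\alpha\partial_{\bar\beta}$ is the (complex) Laplacian of the K\"ahler metric of $M$, $\nabla$ is an ordinary coordinate derivative, and $C$ depends only on the $C^2$-norm of $\phi$ on $\bar B$ together with the metrics and curvatures of $M$ and $N$ there. This is where harmonicity and the K\"ahler hypothesis on both factors are used essentially: differentiating the harmonic map system
\[
g^{\alpha\bar\beta}\Big(\partial_\alpha\partial_{\bar\beta}\phi^i + {}^N\Gamma^i_{jk}(\phi)\,\partial_\alpha\phi^j\,\partial_{\bar\beta}\phi^k\Big)=0
\]
in the $\bar z^\gamma$-direction and commuting derivatives (permissible since $M$ is K\"ahler and the only surviving Christoffel symbols of $N$ are the holomorphic ones), the leading term becomes $g^{\alpha\bar\beta}\partial_\alpha\partial_{\bar\beta}u^i_{\bar\gamma}$, while every remaining term is polynomial in $u$, in $\nabla\phi$ and in the connection and curvature coefficients of the two metrics, and --- since $\phi\in C^2(\bar B)$ --- is therefore bounded by $C(|u|+|\nabla u|)$. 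On a general Riemannian target this cancellation fails, which is the structural reason why holomorphicity need not propagate there. One may also state this invariantly: $\partial''\phi$, regarded as a section of $\Lambda^{0,1}M\otimes\phi^{-1}T^{1,0}N$ with the pulled-back connection, satisfies a Bochner--Weitzenb\"ock type identity whose zeroth order term is curvature acting linearly on $\partial''\phi$, which again yields the displayed inequality.

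Finally I would conclude. Because $\phi$ is holomorphic on the nonempty open set $U$, the function $u$ vanishes identically on $U$, hence vanishes to infinite order at each point of $U$. Aronszajn's unique continuation theorem applied to the displayed inequality on coordinate balls shows that the set where $u$ vanishes to infinite order is open; it is trivially closed and nonempty, so by connectedness of $M$ it equals $M$. Thus $u\equiv 0$ on $M$, i.e.\ $\phi$ is holomorphic on $M$. The main obstacle is the middle step: carrying out the differentiation of the harmonic map system and the K\"ahler bookkeeping carefully enough to verify that every error term is genuinely controlled by $|u|+|\nabla u|$ with locally bounded coefficients. Once that Aronszajn-type inequality is in hand, the remainder is a standard invocation of the classical unique continuation theorem together with a connectedness argument.
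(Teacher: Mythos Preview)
The paper does not supply its own proof of this lemma; it is quoted from \cite{Siu1} and used as a black box. Your proposal correctly reconstructs Siu's original argument: reduce to Aronszajn's unique continuation theorem by showing that the section $u=\partial''\phi$ satisfies an elliptic inequality $|\Delta u|\le C(|u|+|\nabla u|)$, obtained by differentiating the harmonic map equation in a $\bar z$-direction and using the K\"ahler identities on both source and target so that every residual term carries a factor of $u$ or $\nabla u$.

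One small remark on your bookkeeping: in the harmonic map equation $g^{\alpha\bar\beta}\big(\partial_\alpha\partial_{\bar\beta}\phi^i+\Gamma^i_{jk}(\phi)\,\partial_\alpha\phi^j\,\partial_{\bar\beta}\phi^k\big)=0$, the factor $\partial_{\bar\beta}\phi^k$ is already a component of $u$, so the nonlinear term is itself $O(|u|)$ even before differentiation; this makes the verification of the Aronszajn inequality slightly cleaner than you suggest. Your identification of the ``main obstacle'' is accurate, and the argument goes through.
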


From the Lemmas \ref{y526}, \ref{y528} and \ref{y529}, we get the following unique continuation theorem.
\begin{proposition}\label{sst6}
Let $\phi: (M^{2m+1},J,\theta)\rightarrow(N,h)$ be a harmonic map from a connected Sasakian manifold to a K\"{a}hler manifold. Let $U$ be a nonempty open subset of $M$. If $\phi$ is CR holomorphic (resp. CR anti-holomorphic) on $U$, then $\phi$ is CR holomorphic (resp. CR anti-holomorphic ) on $M$.
\end{proposition}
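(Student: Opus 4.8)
The plan is to reduce the statement to Siu's unique continuation theorem for holomorphic maps between K\"ahler manifolds (Lemma \ref{y529}) by passing to the Riemannian cone. Since $M$ is Sasakian, its cone $C(M)=\mathbb{R}^+\times_r M$, equipped with the metric $\tilde g=dr^2+(r^2/4)g_{\theta}$, is K\"ahler by the proposition above; and since $M$ is connected, $C(M)=\mathbb{R}^+\times M$ is connected too. The idea is that both harmonicity and CR holomorphicity are detected on the cone, where the classical unique continuation result is available.

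First I would form the conic extension $\tilde\phi:C(M)\to N$ given by $\tilde\phi(x,r)=\phi(x)$ as in (\ref{y527}). Because $\phi$ is harmonic and $N$ is in particular Riemannian, Lemma \ref{y526} shows that $\tilde\phi$ is harmonic; and because $N$ is K\"ahler, Lemma \ref{y528} shows that $\phi$ is CR holomorphic (resp. CR anti-holomorphic) if and only if $\tilde\phi$ is holomorphic (resp. anti-holomorphic). Next I would note that the open set $U\subset M$, with the restricted pseudo-Hermitian structure, is again a Sasakian manifold, that its cone $C(U)=\mathbb{R}^+\times_r U$ is exactly the open subset $\mathbb{R}^+\times U$ of $C(M)$, and that $\tilde\phi|_{C(U)}$ is the conic extension of $\phi|_U$. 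Applying Lemma \ref{y528} to $\phi|_U$, the hypothesis that $\phi$ is CR holomorphic on $U$ gives that $\tilde\phi$ is holomorphic on the nonempty open set $C(U)\subset C(M)$. Then Lemma \ref{y529}, applied to the harmonic map $\tilde\phi$ between the K\"ahler manifolds $C(M)$ (connected) and $N$, yields that $\tilde\phi$ is holomorphic on all of $C(M)$. Invoking Lemma \ref{y528} one last time, $\tilde\phi$ holomorphic on $C(M)$ forces $\phi$ to be CR holomorphic on $M$. The CR anti-holomorphic case is identical, replacing ``holomorphic'' by ``anti-holomorphic'' throughout.

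The argument is essentially formal once the three lemmas are in place, so the only delicate point — and what I would regard as the ``main obstacle'' — is the bookkeeping needed to chain them together: that the restriction of the Sasakian structure to an open set is again Sasakian, that the cone construction commutes with passing to open subsets (so $C(U)$ sits as an open subset of $C(M)$ and $\tilde\phi$ restricts to the conic extension of $\phi|_U$ there), and that connectedness of $M$ passes to $C(M)$ so that Lemma \ref{y529} applies. None of these is deep, but they are exactly what makes Lemmas \ref{y526}, \ref{y528} and \ref{y529} compose into the desired unique continuation statement.
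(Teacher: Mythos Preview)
Your proof is correct and follows essentially the same approach as the paper: pass to the K\"ahler cone $C(M)$, use Lemma \ref{y526} to get harmonicity of $\tilde\phi$, Lemma \ref{y528} to translate CR holomorphicity on $U$ into holomorphicity of $\tilde\phi$ on $\mathbb{R}^+\times U$, and then Lemma \ref{y529} to propagate holomorphicity to all of $C(M)$ and back. Your treatment is in fact more careful than the paper's about the bookkeeping (restriction of the Sasakian structure, $C(U)\subset C(M)$, connectedness), but the logical skeleton is identical.
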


\begin{proof}
From Lemma \ref{y526}, we know that $\tilde{\phi}:C(M)\rightarrow N$ is harmonic. Suppose $\phi$ is CR holomorphic on $U$. It follows from Lemma \ref{y528} that $\tilde{\phi}$ is holomorphic on $\mathbb{R}_+\times_r U$. Using Lemmas \ref{y528} and \ref{y529}, we conclude that $\phi$ is CR holomorphic on $M$.
\end{proof}

Now we may establish the following results.
\begin{theorem}
\label{ssc3}
Let $(M^{2m+1},J,\theta)$ be a compact Sasakian manifold and $N$ be a K\"{a}hler manifold with strongly negative curvature. Suppose $\phi:M\rightarrow N$ is either a harmonic map or a pseudoharmonic map, and $rank_{\mathbb{R}}d\phi\geq 3$ at some point of $M$, then $\phi$ is CR holomorphic or CR anti-holomorphic on $M$.
\end{theorem}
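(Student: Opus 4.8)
The plan is to feed the Siu--Sampson curvature identity of Theorem~\ref{sst4} through the \emph{strict} strong negativity of $N$ to obtain pointwise $(\pm)$-holomorphicity near the given point, and then to globalize by the unique continuation Proposition~\ref{sst6}. First, strong negativity implies strong semi-negativity and also nonpositive sectional curvature, so Theorem~\ref{sst4} applies: $\phi$ is CR pluriharmonic, hence basic (Proposition~\ref{bap3}) and harmonic (Corollary~\ref{cor21}), and the identity $\l\l Q(\phi_\alpha\wedge\phi_\beta),\phi_\alpha\wedge\phi_\beta\r\r=0$ holds. Re-examining the proof of Theorem~\ref{sst4}, from $div\,\theta_{W_5}=2|\phi^i_{\alpha\bar\beta}|^2-\l\l Q(\phi_\alpha\wedge\phi_\beta),\phi_\alpha\wedge\phi_\beta\r\r$ (equation (\ref{plu67})) and integration both nonnegative summands vanish identically; since each fixed term $\l\l Q(\phi_\alpha\wedge\phi_\beta),\phi_\alpha\wedge\phi_\beta\r\r=\l Q^{(1,1)}(\xi_{\alpha\beta}),\overline{\xi_{\alpha\beta}}\r\le 0$ by strong semi-negativity (here $\xi_{\alpha\beta}=(\phi_\alpha\wedge\phi_\beta)^{(1,1)}$, using that the K\"ahler identity kills $Q$ on $\wedge^{(2,0)}$ and $\wedge^{(0,2)}$), each term vanishes at every point. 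Strict strong negativity (Definition~6.1) then forces $(\phi_\alpha\wedge\phi_\beta)^{(1,1)}=0$ for all $\alpha,\beta$ at every point of $M$.

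Next I would establish the pointwise dichotomy. Decompose $\phi_\alpha=d\phi(T_\alpha)=\phi_\alpha'+\phi_\alpha''$ with $\phi_\alpha'=\phi^i_\alpha E_i\in T^{1,0}N$ and $\phi_\alpha''=\phi^{\bar i}_\alpha E_{\bar i}\in T^{0,1}N$; the $(1,1)$-component of $\phi_\alpha\wedge\phi_\beta$ is then $\phi_\alpha'\otimes\phi_\beta''-\phi_\beta'\otimes\phi_\alpha''\in T^{1,0}N\otimes T^{0,1}N$, so the vanishing above reads $\phi_\alpha'\otimes\phi_\beta''=\phi_\beta'\otimes\phi_\alpha''$ for all $\alpha,\beta$. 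Fix a point $y$ with $rank_{\mathbb R}d\phi(y)\ge 3$. If some $\phi_{\beta_0}(y)$ had both a nonzero $(1,0)$-part and a nonzero $(0,1)$-part, equating the rank-one tensors $\phi_\alpha'\otimes\phi_{\beta_0}''=\phi_{\beta_0}'\otimes\phi_\alpha''$ forces every $\phi_\alpha(y)$ to be a scalar multiple of $\phi_{\beta_0}(y)$, so (recalling $d\phi(T)=0$) $rank_{\mathbb R}d\phi(y)\le 2$, a contradiction. Hence every $\phi_\alpha(y)$ has $\phi_\alpha'=0$ or $\phi_\alpha''=0$; but if some $\phi_\alpha(y)$ had $\phi_\alpha'\ne 0$ (hence $\phi_\alpha''=0$) while some $\phi_\beta(y)$ had $\phi_\beta''\ne 0$ (hence $\phi_\beta'=0$), then $\phi_\alpha'\otimes\phi_\beta''=\phi_\beta'\otimes\phi_\alpha''=0$ is absurd. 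Therefore at $y$ either all $\phi_\alpha''$ vanish or all $\phi_\alpha'$ vanish, which together with $d\phi(T)=0$ means $d\phi_y\circ J=J'\circ d\phi_y$ or $d\phi_y\circ J=-J'\circ d\phi_y$.

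It remains to globalize. The set $U$ where $rank_{\mathbb R}d\phi\ge 3$ is open (lower semicontinuity of the rank) and nonempty by hypothesis; shrink it to a connected neighborhood of the given point. On $U$ the relatively closed sets $U_+=\{\,\phi^{\bar i}_\alpha=0\ \forall i,\alpha\,\}$ and $U_-=\{\,\phi^{i}_\alpha=0\ \forall i,\alpha\,\}$ cover $U$ by the dichotomy, while $U_+\cap U_-\cap U=\emptyset$ since $d\phi\ne 0$ on $U$; connectedness forces $U=U_+$ or $U=U_-$, i.e.\ $\phi$ is CR holomorphic (resp.\ CR anti-holomorphic) on the nonempty open set $U$. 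Since $\phi$ is harmonic and $N$ is K\"ahler, the unique continuation Proposition~\ref{sst6} then yields that $\phi$ is CR holomorphic (resp.\ CR anti-holomorphic) on all of $M$.

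The main obstacle is the algebraic dichotomy of the second paragraph: converting the purely algebraic identity $(\phi_\alpha\wedge\phi_\beta)^{(1,1)}=0$ together with the rank bound into pointwise $(\pm)$-holomorphicity is the heart of Siu's method, and it is precisely here that the hypothesis $rank_{\mathbb R}d\phi\ge 3$ (rather than $\ge 2$) is used. A secondary point requiring care is that Theorem~\ref{sst4} only produces CR pluriharmonicity and the curvature equation from strong \emph{semi}-negativity; extracting the vanishing $(\phi_\alpha\wedge\phi_\beta)^{(1,1)}=0$ genuinely uses the strict strong negativity of $N$.
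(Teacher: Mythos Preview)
Your proof is correct. It takes a genuinely different route from the paper's. The paper argues via the conic extension: from Theorem~\ref{sst4} (so $\phi$ is harmonic) and Lemma~\ref{y526}, the conic extension $\tilde\phi:C(M)\to N$ is harmonic on the K\"ahler cone; the paper then invokes ``Siu's results'' as a black box to conclude that $\tilde\phi$ is $\pm$holomorphic on $C(M)$, and descends via Proposition~\ref{sst6} (together with Lemma~\ref{y528}). You instead stay on $M$, extract from (\ref{plu67}) and strict strong negativity the pointwise vanishing $(\phi_\alpha\wedge\phi_\beta)^{(1,1)}=0$, and reproduce Siu's linear-algebra dichotomy directly on the CR manifold; only at the final unique continuation step do you (implicitly, through Proposition~\ref{sst6}) touch the cone. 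Your approach is more self-contained and makes fully explicit where the hypothesis $rank_{\mathbb R}d\phi\ge 3$ enters. The paper's approach has the virtue of a clean reduction to the classical K\"ahler theory; note however that since $C(M)$ is noncompact, the appeal to ``Siu's results'' must really be read as Siu's \emph{pointwise} algebraic argument applied to a map already known (via Theorem~\ref{sst4} and Lemma~\ref{lem4}) to be pluriharmonic with the curvature identity --- which is exactly the content you spell out directly on $M$.
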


\begin{proof}
From Theorem \ref{sst4} and Lemma \ref{y526}, we know that $\tilde{\phi}$ is harmonic. By Siu's results, we have $\tilde{\phi}$ is $\pm$holomorphic on $C(M)$. By Proposition \ref{sst6}, we conclude that $\phi$ is CR $\pm$holomorphic on $M$.
\end{proof}

Keeping in mind Udagawa's proof to Theorem 4 of \cite{Uda} the following result is relevant.
 
\begin{theorem}
Every CR pluriharmonic map $\phi:(M,J,\theta)\rightarrow (N,h)$ from a Sasakian manifold $M$ into an irreducible Hermitian symmetric space $N$ of compact or noncompact type is CR $\pm$holomorphic if $Max_M rank_{\mathbb{R}}d\phi\geq 2P(N)+1$, where $P(N)$ is the degree of strong non-degenerate of the bisectional curvature of $N$ (cf. \cite{Siu2} for the definition of  the degree of strong non-degenerate of the bisectional curvature of $N$).
\end{theorem}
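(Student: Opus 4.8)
The plan is to reduce the statement to the corresponding holomorphicity theorem for \emph{pluriharmonic} maps out of K\"ahler manifolds by passing to the Riemannian cone over $M$, which is precisely the device underlying Udagawa's proof of Theorem 4 in \cite{Uda}. First I would form the cone $(C(M),\tilde g)=(\mathbb{R}^{+}\times_{r}M,\ dr^{2}+\frac{r^{2}}{4}g_{\theta})$, which is K\"ahler because $M$ is Sasakian (cf.\ \cite{BG}), and consider the conic extension $\tilde\phi\colon C(M)\to N$, $\tilde\phi(x,r)=\phi(x)$. Since $\phi$ is CR pluriharmonic, Lemma \ref{lem4} gives that $\tilde\phi$ is pluriharmonic, hence harmonic, while $N$, being an irreducible Hermitian symmetric space, is K\"ahler. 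Next I would record the rank comparison: $\tilde\phi$ is constant in $r$, so $d\tilde\phi(\partial/\partial r)=0$, and under the splitting $T_{(x,r)}C(M)=\mathbb{R}\,\partial/\partial r\oplus T_{x}M$ the differential $d\tilde\phi_{(x,r)}$ restricts to $d\phi_{x}$ on $T_{x}M$; hence $rank_{\mathbb{R}}d\tilde\phi_{(x,r)}=rank_{\mathbb{R}}d\phi_{x}$ for every $(x,r)$, so $Max_{C(M)}rank_{\mathbb{R}}d\tilde\phi=Max_{M}rank_{\mathbb{R}}d\phi\geq 2P(N)+1$. Moreover $\tilde J$ is built from $J$ and the cone direction (see (\ref{y525})), so by Lemma \ref{y528} the map $\phi$ is CR $\pm$holomorphic if and only if $\tilde\phi$ is $\pm$holomorphic.

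With these reductions in hand I would run Udagawa's argument (Theorem 4 of \cite{Uda}, whose algebraic heart is Siu's notion of strong non-degeneracy of the bisectional curvature, \cite{Siu2}) for the pluriharmonic map $\tilde\phi$ from the K\"ahler manifold $C(M)$ into the irreducible Hermitian symmetric space $N$. Since $\tilde\phi$ is pluriharmonic, the vanishing mixed Hessian $\tilde\phi^{i}_{\alpha\bar\beta}=0$ has vanishing covariant derivative, and commuting covariant derivatives then produces, \emph{pointwise}, the Siu--Sampson type curvature equations of the form $\widehat{R_{ijkl}}\,\tilde\phi^{i}_{\alpha}\tilde\phi^{j}_{\beta}\tilde\phi^{k}_{\bar\gamma}\tilde\phi^{l}_{\bar\delta}=0$; combined with strong non-degeneracy of degree $P(N)$ and the bound $rank_{\mathbb{R}}d\tilde\phi\geq 2P(N)+1$ at a point $p_{0}$, these force $d\tilde\phi_{p}(T_{1,0}C(M)_{p})$ to be contained in $\mathfrak{p}^{1,0}$ --- or, uniformly on the connected component of $p_{0}$, in $\mathfrak{p}^{0,1}$ --- for a fixed invariant complex structure of $N$, at every $p$ in the open set $\{rank_{\mathbb{R}}d\tilde\phi\geq 2P(N)+1\}$. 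Thus $\tilde\phi$ is holomorphic, respectively anti-holomorphic, on that open set; since $rank_{\mathbb{R}}d\tilde\phi$ depends only on the $M$-coordinate, this set has the form $\mathbb{R}^{+}\times U$ with $U\subseteq M$ open and nonempty, so Lemma \ref{y528} shows $\phi$ is CR $\pm$holomorphic on $U$. Finally, $\phi$ is harmonic (CR pluriharmonic maps are basic, pseudoharmonic, and harmonic by Proposition \ref{bap3} and Corollary \ref{cor21}), so, taking $M$ connected, the unique continuation Proposition \ref{sst6} upgrades this to CR $\pm$holomorphicity on all of $M$.

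The one delicate point, which I expect to be the main obstacle, is that Udagawa's theorem is formulated over a \emph{compact} K\"ahler domain, whereas $C(M)$ is noncompact. This is in fact harmless here: pluriharmonicity of $\tilde\phi$ is part of the hypothesis, not something extracted via an integral Bochner identity, so every curvature relation used in the localization step above holds pointwise, and the only genuinely global ingredient is Siu's unique continuation for harmonic maps between K\"ahler manifolds (Lemma \ref{y529}, used through Proposition \ref{sst6}), which needs no compactness. For a fully self-contained account one would reproduce from \cite{Siu2,Uda} the local algebraic lemma asserting that an ``abelian-type'' subspace $W\subset\mathfrak{p}\otimes\mathbb{C}$ of real dimension $\geq 2P(N)+1$ on which the relevant curvature pairing degenerates must be contained in $\mathfrak{p}^{1,0}$ or $\mathfrak{p}^{0,1}$ for some invariant complex structure; together with the pointwise curvature equations, this is exactly what the localization requires.
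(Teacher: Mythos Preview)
Your proposal is correct and follows essentially the same route as the paper: pass to the K\"ahler cone $C(M)$, use Lemma \ref{lem4} to get that $\tilde\phi$ is pluriharmonic, compare ranks, invoke Udagawa's Theorem 4, and descend via Lemma \ref{y528}. The paper's proof is more terse---it simply cites Udagawa's result on $C(M)$ and then applies Lemma \ref{y528}---whereas you are (rightly) more careful about the fact that $C(M)$ is noncompact, explaining that Udagawa's argument is pointwise-algebraic once pluriharmonicity is assumed and that only unique continuation is needed globally; your detour through Proposition \ref{sst6} on $M$ (rather than Lemma \ref{y529} on $C(M)$) is an equivalent way to close the argument.
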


\begin{proof}
By Lemma \ref{lem4}, we have $\tilde{\phi}$ is pluriharmonic. Since $Max_M rank_{\mathbb{R}}d\phi\geq 2P(N)+1$ implies that $Max_{C(M)} rank_{\mathbb{R}}d\tilde\phi\geq 2P(N)+1$, by Theorem 4 of \cite{Uda} we get that $\tilde{\phi}$ is $\pm$holomorphic. From Lemma \ref{y528}, we prove that $\phi$ is CR $\pm$holomorphic.
\end{proof}

\section*{Acknowledgments }
This work was partially supported by the National Natural Science Foundation of China [grant number 11271071] and Laboratory of Mathematics for Nonlinear Science, Fudan; research of the last author was partially supported by HUST Innovation Research Grant 0118011034.

\begin{flushleft}
Tian Chong and Yibin Ren\\
School of Mathematical Science\\
Fudan University, Shanghai 200433, P.R. China\\
E-mail address: valery4619@sina.com (Tian Chong)\\
E-mail address: allenrybqqm@hotmail.com (Yibin Ren)
\end{flushleft}

\begin{flushleft}
Yuxin Dong\\
School of Mathematical Science\\
and\\
Laboratory of Mathematics for Nonlinear Science\\
Fudan University, Shanghai 200433, P.R. China\\
E-mail address: yxdong@fudan.edu.cn
\end{flushleft}

\begin{flushleft}
Guilin Yang\\
School of Mathematics and Statistics\\
Huazhong University of Science and Technology\\
Wuhan 430074, P.R. China\\
E-mail address: glyang@hust.edu.cn
\end{flushleft}

\end{document}